\newcommand{\vecb}{\mathbf{b}}
\newcommand{\vecx}{\mathbf{x}}
\newcommand{\vecr}{\mathbf{r}}
\newcommand{\vecu}{\mathbf{u}}
\newcommand{\vecv}{\mathbf{v}}
\newcommand{\mata}{\mathbf{A}}
\newcommand{\matb}{\mathbf{B}}
\newcommand{\matx}{\mathbf{X}}
\newcommand{\matu}{\mathbf{U}}
\newcommand{\matv}{\mathbf{V}}
\newcommand{\matW}{\mathbf{W}}
\newcommand{\maty}{\mathbf{Y}}
\newcommand{\matA}{\mathbf{A}}
\newcommand{\matB}{\mathbf{B}}
\newcommand{\matD}{\mathbf{D}}
\newcommand{\matG}{\mathbf{G}}
\newcommand{\matI}{\mathbf{I}}
\newcommand{\matK}{\mathbf{K}}
\newcommand{\matR}{\mathbf{R}}
\newcommand{\matM}{\mathbf{M}}
\newcommand{\matN}{\mathbf{N}}
\newcommand{\matQ}{\mathbf{Q}}
\newcommand{\matS}{\mathbf{S}}
\newcommand{\matX}{\mathbf{X}}
\newcommand{\matU}{\mathbf{U}}
\newcommand{\matY}{\mathbf{Y}}
\newcommand{\matOmega}{\boldsymbol{\Omega}}
\newcommand{\tensA}{\mathcal{A}}
\newcommand{\tensB}{\mathcal{B}}
\newcommand{\tensE}{\mathcal{E}}
\newcommand{\tensH}{\mathcal{H}}
\newcommand{\tensI}{\mathcal{I}}
\newcommand{\tensM}{\mathcal{M}}
\newcommand{\tensS}{\mathcal{S}}
\newcommand{\tensT}{\mathcal{T}}
\newcommand{\tensU}{\mathcal{U}}
\newcommand{\tensX}{\mathcal{X}}
\newcommand{\rank}{\mathrm{rank}}
\newcommand{\rmvec}{\mathrm{vec}}
\newcommand{\tr}{\mathrm{tr}}
\newcommand{\St}{\mathrm{St}}
\newcommand{\grad}{\mathrm{grad}}
\newcommand{\subjectto}{\mathrm{s.\,t.}}
\newcommand{\T}{\mathsf{T}}
\newcommand{\frob}{\mathrm{F}}
\newcommand{\proj}{\mathrm{Proj}}
\newcommand{\vecmatW}{{\vec{\matW}}}
\newcommand{\barf}{{\bar{f}}}
\newcommand{\barg}{{\bar{g}}}
\newcommand{\baru}{\bar{\vecu}}
\newcommand{\barv}{\bar{\vecv}}
\newcommand{\bareta}{{\bar{\eta}}}
\newcommand{\bartensH}{{\bar{\tensH}}}
\newcommand{\cond}{\kappa}
\newcommand{\eucmetric}{\mathrm{e}}
\DeclareMathOperator{\dist}{dist}
\DeclareMathOperator{\diag}{diag}
\DeclareMathOperator{\qf}{qf}
\DeclareMathOperator{\ten}{ten}
\DeclareMathOperator{\tangent}{T}
\DeclareMathOperator{\retr}{R}
\DeclareMathOperator{\Hess}{Hess}
\DeclareMathOperator{\sym}{sym}
\DeclareMathOperator{\argmin}{arg\,min}
\DeclareMathOperator{\argmax}{arg\,max}
\crefname{hypothesis}{Hypothesis}{Hypotheses}
\title{Optimization on product manifolds under a preconditioned metric\thanks{Submitted to the editors DATE.
\funding{BG was supported by the Young Elite Scientist Sponsorship Program by CAST. YY was supported by the National Natural Science Foundation of China (grant No. 12288201).}}}
\author{Bin Gao\thanks{State Key Laboratory of Scientific and Engineering 
		Computing, Academy of Mathematics and Systems Science, Chinese  Academy of Sciences, China
		({\{gaobin,yyx\}@lsec.cc.ac.cn}).}
    \and Renfeng Peng\thanks{State Key Laboratory of Scientific and Engineering 
    Computing, Academy of Mathematics and Systems Science, Chinese  Academy of Sciences, and University of Chinese Academy of Sciences, China ({pengrenfeng@lsec.cc.ac.cn}).}
	\and Ya-xiang Yuan\footnotemark[2]
}
\begin{document}

\maketitle

\begin{abstract}
  Since optimization on Riemannian manifolds relies on the chosen metric, it is appealing to know that how the performance of a Riemannian optimization method varies with different metrics and how to exquisitely construct a metric such that a method can be accelerated. To this end, we propose a general framework for optimization problems on product manifolds endowed with a preconditioned metric, and we develop Riemannian methods under this metric. Generally, the metric is constructed by an operator that aims to approximate the diagonal blocks of the Riemannian Hessian of the cost function. We propose three specific approaches to design the operator: exact block diagonal preconditioning, left and right preconditioning, and Gauss--Newton type preconditioning. Specifically, we tailor new preconditioned metrics and adapt the proposed Riemannian methods to the canonical correlation analysis and the truncated singular value decomposition problems, which provably accelerate the Riemannian methods. Additionally, we adopt the Gauss--Newton type preconditioning to solve the tensor ring completion problem. Numerical results among these applications verify that a delicate metric does accelerate the Riemannian optimization methods. 
\end{abstract}

\begin{keywords}
  Riemannian optimization; preconditioned metric; canonical correlation analysis; singular value decomposition; tensor completion
\end{keywords}

\begin{MSCcodes}
  53B21; 65K05; 65F30; 90C30 
\end{MSCcodes}
\section{Introduction}
We consider the optimization problems on product manifolds:
\begin{equation}
    \label{eq: optimization problem}
    \min_{x\in\tensM} f(x),
\end{equation}
where $f$ is a smooth cost function and the search space $\tensM$ is a product manifold, i.e.,
\[\tensM:=\tensM_1\times\tensM_2\times\cdots\times\tensM_K,\]
$\tensM_k$ is a smooth manifold for $k=1,2,\dots,K$ and $K$ is a positive integer. Optimization on product manifolds has a wide variety of applications, including singular value decomposition~\cite{sato2013riemannian}, joint approximate tensor diagonalization problem~\cite{usevich2020approximate}, dimensionality reduction of EEG covariance matrices~\cite{yamamoto2021subspace}, and canonical correlation analysis~\cite{shustin2023riemannian}. In addition, instead of working with full-size matrices or tensors, matrix and tensor decompositions---which decompose a matrix and tensor into smaller blocks---allow us to implement optimization methods on a product manifold in low-rank matrix and tensor completion~\cite{boumal2015low,kasai2016low,dong2022new,cai2022tensor,swijsen2022tensor,gao2024riemannian}.

\paragraph{Related works and motivation}
Riemannian optimization, designing algorithms based on the geometry of a Riemannian manifold $\tensM$, appears to be prosperous in many areas. One can propose Riemannian optimization methods to solve problem~\cref{eq: optimization problem}, e.g., Riemannian gradient descent and Riemannian conjugate gradient methods. We refer to~\cite{absil2009optimization,boumal2023intromanifolds} for a comprehensive overview.

Since different metrics result in different Riemannian gradients and thus distinct Riemannian methods, one is inquisitive about how the performance of a Riemannian method relies on the choice of a metric $g$. Moreover, the condition number of the Riemannian Hessian of the cost function at a local minimizer $x^*$, denoted by $\kappa:=\kappa_g(\Hess_g\!f(x^*))$, affects the local convergence of first-order methods in Riemannian optimization. For instance, in the Euclidean case, i.e., $\tensM=\mathbb{R}^n$, the asymptotic local linear convergence rates of the steepest gradient descent and the conjugate gradient methods for solving the symmetric positive-definite linear systems are $(\kappa-1)/(\kappa+1)$ and $(\sqrt{\kappa}-1)/(\sqrt{\kappa}+1)$ respectively~\cite[Theorems 3.3, Theorem 5.5]{nocedal2006numerical}. In general, the asymptotic local linear convergence rate of a Riemannian gradient descent method was proved to be $1-1/\mathcal{O}(\kappa)$, see, e.g.,~\cite[Chapter 7, Theorem 4.2]{udriste1994convex},~\cite[Theorem 4.5.6]{absil2009optimization}, and~\cite[Theorem 4.20]{boumal2023intromanifolds}. Notice that an appropriate metric $g$ can lead to a smaller condition number. In view of these observations, it is natural to ask:
\begin{center}
  \emph{Can Riemannian optimization methods be accelerated\\ by choosing a metric ``exquisitely''?}
\end{center}
The following example presents a positive answer. 
\begin{example}
  \label{eg: eg1}
  Consider the problem
  \begin{equation*}
    \min\ f(\vecx):=-\vecb^\T\vecx,\quad \subjectto\ \vecx\in\tensM_\matb:=\{\vecx\in\mathbb{R}^n:\ \vecx^\T\matb\vecx=1\},=
  \end{equation*}
  where $\matb\in\mathbb{R}^{n\times n}$ is symmetric positive definite and $\vecb\in\mathbb{R}^{n}$.
  The search space $\tensM_\matb$ is an ellipsoid. The problem has a closed-form solution $\vecx^*=\matb^{-1}\vecb/\|\matb^{-1}\vecb\|_\matb$ with $\|\vecx\|_\matb^2:=\vecx^\T\matb\vecx$. 
  We explore the effect of a family of metrics, 
  \[g_{\lambda,\vecx}(\xi,\eta):=\langle\xi,(\lambda\matI_n+(1-\lambda)\matB)\eta\rangle\quad\text{for tangent vectors } \xi \text{ and }\eta,\]
  to the Riemannian gradient descent (RGD) method and the condition number of $\Hess_{g_{\lambda}}\!f(\vecx^*)$ in ~\cref{fig: Eg 1.1}, where $\lambda\in\mathbb{R}$ such that $\lambda\matI_n+(1-\lambda)\matB$ is positive definite. The left figure depicts the sequences generated by RGD under the Euclidean metric $g_{1,\vecx}(\xi,\eta)=\langle\xi,\eta\rangle$ and the \emph{scaled metric} $g_{0,\vecx}(\xi,\eta)=\langle\xi,\matb\eta\rangle$, and it shows that RGD under the metric $g_{0}$ converges faster than the one under the Euclidean metric. 
  Furthermore, the right figure confirms that the condition number varies with the metrics and $g_{0}$ leads to the smallest condition number. The detailed computation can be found in~\cref{app: eg 1.1}.
  \begin{figure}[htbp]
    \centering
    \subfigure{\includegraphics[width=0.51\textwidth, trim=3.5cm 3cm 3cm 2.3cm, clip]{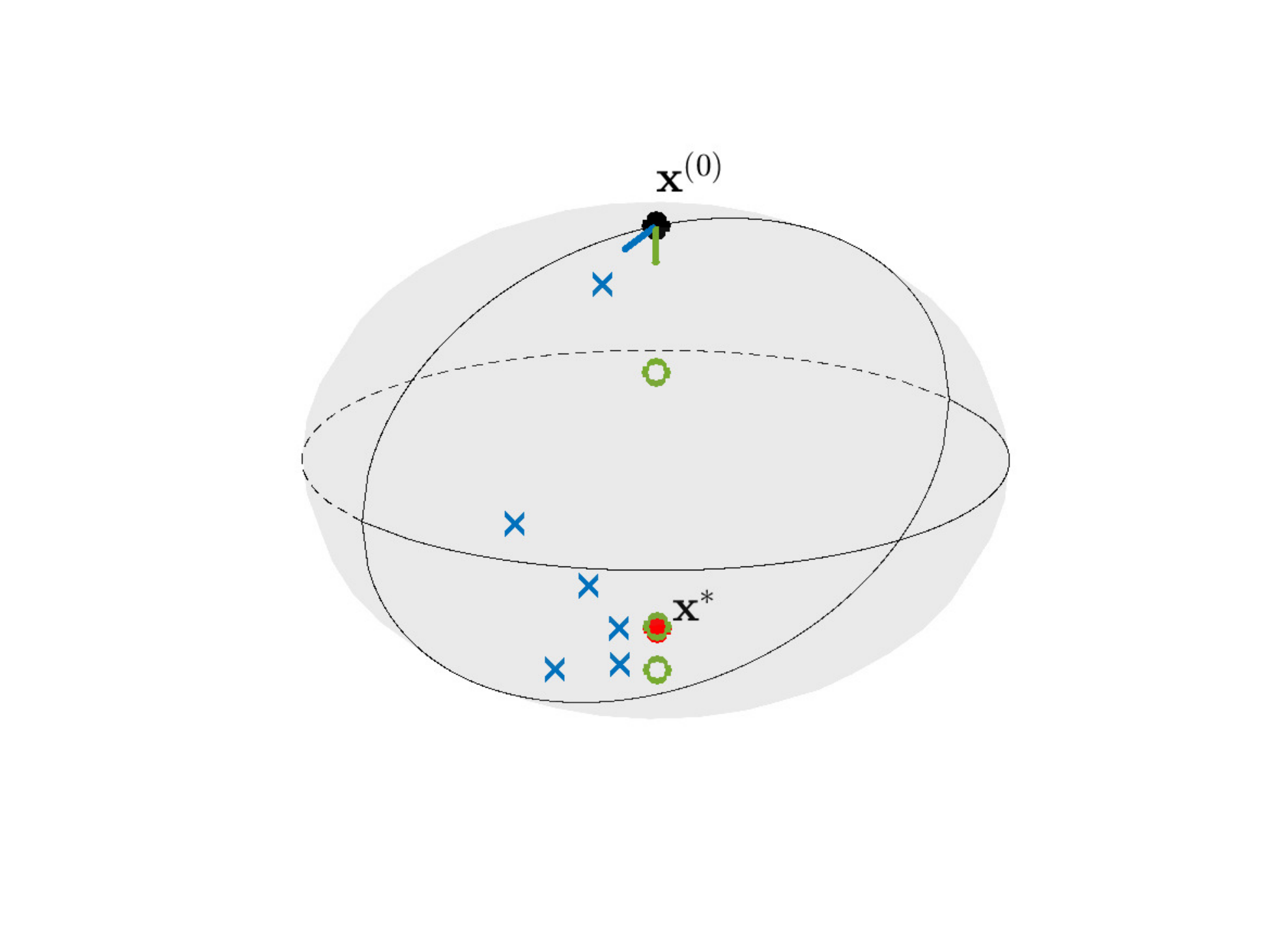}}\quad
    \subfigure{\includegraphics[width=0.44\textwidth]{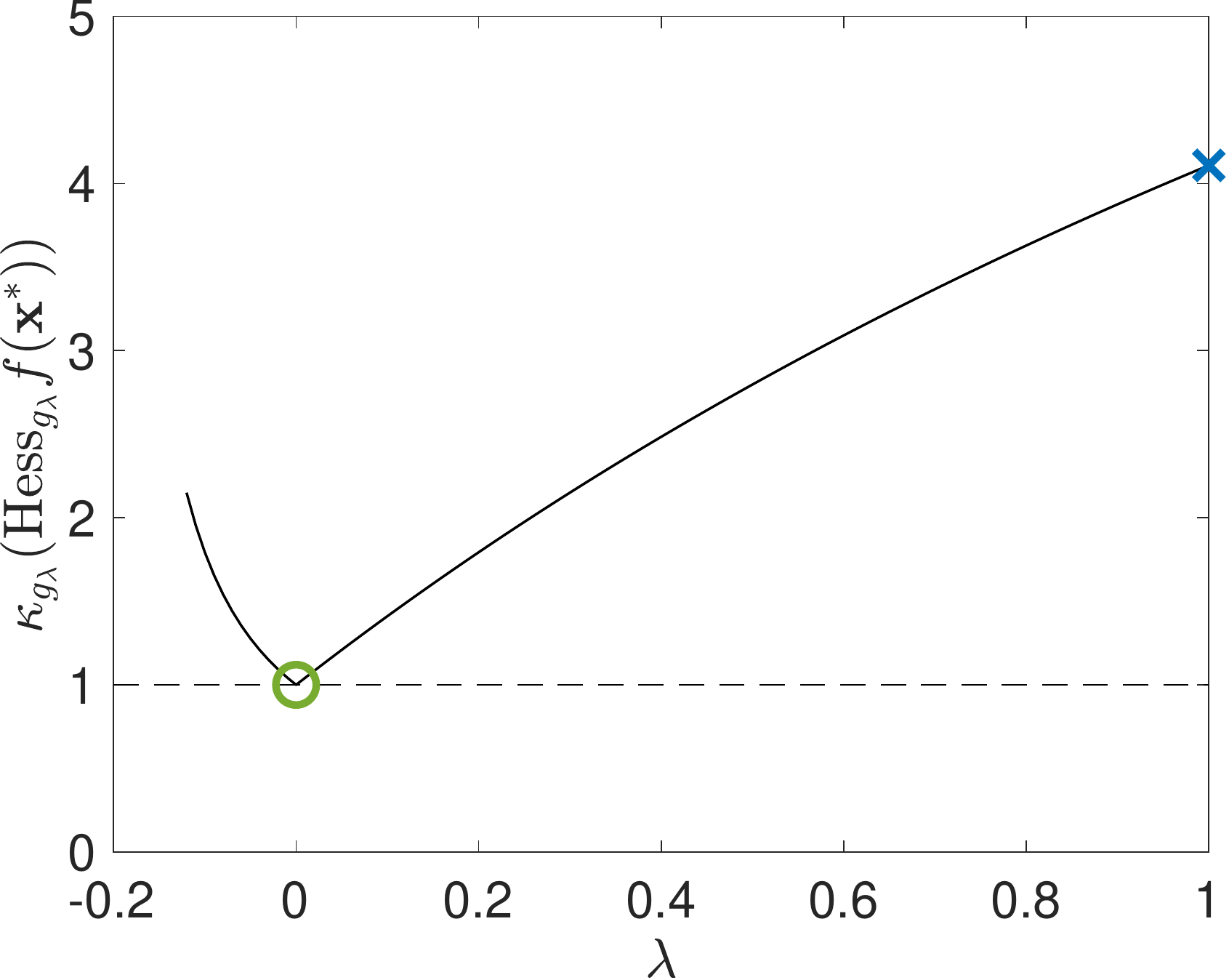}}
    \caption{Left: sequences generated by the Riemannian gradient descent method under two metrics for $\matb=\diag(2^2,3^2,1)$ and $\vecb=(1,1,1)$. Right: the condition number of $\Hess_{g_{\lambda}}\!f(\vecx^*)$ for $\lambda\in(-1/8,1]$. Blue marker: the Euclidean metric; green marker: the scaled metric.}
    \label{fig: Eg 1.1} 
  \end{figure}
\end{example}

Developing an appropriate metric to enhance the performance of Riemannian optimization methods was discussed in the existing works. For instance, \emph{Riemannian preconditioning} was proposed by Mishra and Sepulchre~\cite{mishra2016riemannian} for solving equality-constrained optimization problems where the feasible set enjoys a manifold structure. The non-Euclidean metrics were derived from the Euclidean Hessian of the Lagrangian function, while the explicit construction of the Hessian can be expensive in practice. As a remedy, the \emph{block-diagonal approximation} was considered to construct metrics in the matrix and tensor completion problems~\cite{mishra2012riemannian,kasai2016low,dong2022new,cai2022tensor,gao2024riemannian}. Specifically, in view of the block structure in tensor decompositions, the metric was developed by taking advantage of the diagonal blocks of the Hessian of the cost function, and the Riemannian optimization methods under those metrics were proved to be efficient. 
More recently, Shustin and Avron~\cite{shustin2021faster,shustin2023riemannian} proposed a preconditioned metric for generalized Stiefel manifolds by exploiting the Riemannian Hessian of the cost function at the local minimizer. 

In addition, there are other approaches that incorporate preconditioning techniques in Riemannian optimization. Boumal and Absil~\cite{boumal2015low} developed a preconditioner to approximate the Riemannian Hessian in matrix completion. Kressner et al.~\cite{kressner2016preconditioned} proposed preconditioned Richardson iteration and approximate Newton method to solve tensor equations by constructing a Laplacian-like operator. More recently, Tong et al.~\cite{tong2021accelerating} introduced the scaled gradient descent method for low-rank matrix estimation. Bian et al.~\cite{bian2023preconditioned} presented a preconditioned Riemannian gradient descent algorithm for low-rank matrix recovery. Hamed and Hosseini~\cite{hamed2024riemannian} proposed a Riemannian coordinate descent method under a new Riemannian metric for low multilinear rank approximation.

\paragraph{Contributions}
In this paper, we propose a general framework to construct a preconditioned metric on the product manifold $\tensM=\tensM_1\times\tensM_2\times\cdots\times\tensM_K$, which improves the performance of Riemannian optimization methods. Specifically, we consider a metric by designing a self-adjoint and positive-definite linear operator
$\bartensH$ on the tangent bundle $\tangent\!\tensE$ such that 
\[g_x(\xi,\eta):=\langle\xi,\bartensH(x)[\eta]\rangle\approx\langle\xi,\Hess_{\eucmetric}\!f(x)[\eta]\rangle\quad\text{for }\xi,\eta\in\tangent_{x}\!\tensM,\]
where $\tensE:=\tensE_1\times\tensE_2\times\cdots\times\tensE_K$ is the ambient space of $\tensM$ and $\Hess_{\eucmetric}\!f(x)$ refers to the Riemannian Hessian of $f$ at $x\in\tensM$ under the Euclidean metric $\langle\cdot,\cdot\rangle$. Since the operator $\bartensH(x)$ approximates the second-order information, we refer to the metric as a \emph{preconditioned metric}. 

\begin{figure}[htbp]
  \centering
  \begin{tikzpicture}[scale=0.75]
      \def\dist{3.6};
      \coordinate (M1) at (0,0);
      \coordinate (E1) at ($(M1)-(0.3,0.5)$);

      \coordinate (M2) at ($(M1)+(\dist,0)$);
      \coordinate (E2) at ($(M2)-(0.3,0.5)$);

      \coordinate (Mk) at ($(M1)+(2.6*\dist,0)$);
      \coordinate (Ek) at ($(Mk)-(0.3,0.5)$);
  
      \draw[-] plot[smooth, tension=1] coordinates {(M1) ($(M1)+(0.4, 0.55)$) ($(M1)+(1, 0.8)$)};
      \draw[-] plot[smooth, tension=1] coordinates {($(M1)+(1, 0.8)$) ($(M1)+(1.5, 0.6)$) ($(M1)+(1.8, 0.2)$)};
      \draw[-] plot[smooth, tension=1] coordinates {(M1) ($(M1)+(0.4, -0.1)$) ($(M1)+(0.8, -0.4)$)};
      \draw[-] plot[smooth, tension=1] coordinates {($(M1)+(0.8, -0.4)$) ($(M1)+(1.2, 0)$) ($(M1)+(1.8, 0.2)$)};
      \node at ($(M1)+(0.3,1)$) {$\mathcal{M}_1$};
      \node at ($(M1)-(0.2,1)$) {$\mathcal{E}_1$};
      \draw[->] (E1) -- ($(E1)-(0.6,0.8)$);
      \draw[->] (E1) -- ($(E1)+(2,0)$);
      \draw[->] (E1) -- ($(E1)+(0,1.5)$);

      \node (times1) at ($0.5*(M1)+0.5*(1.8, 0)+0.5*(M2)-0.5*(0.3,0)$) {$\times$};
      \draw[-] plot[smooth, tension=1] coordinates {(M2) ($(M2)+(0.4, 0.55)$) ($(M2)+(1, 0.8)$)};
      \draw[-] plot[smooth, tension=1] coordinates {($(M2)+(1, 0.8)$) ($(M2)+(1.5, 0.6)$) ($(M2)+(1.8, 0.2)$)};
      \draw[-] plot[smooth, tension=1] coordinates {(M2) ($(M2)+(0.4, -0.1)$) ($(M2)+(0.8, -0.4)$)};
      \draw[-] plot[smooth, tension=1] coordinates {($(M2)+(0.8, -0.4)$) ($(M2)+(1.2, 0)$) ($(M2)+(1.8, 0.2)$)};
      \node at ($(M2)+(0.3,1)$) {$\mathcal{M}_2$};
      \node at ($(M2)-(0.2,1)$) {$\mathcal{E}_2$};
      \draw[->] (E2) -- ($(E2)-(0.6,0.8)$);
      \draw[->] (E2) -- ($(E2)+(2,0)$);
      \draw[->] (E2) -- ($(E2)+(0,1.5)$);

      \node (times2) at ($0.5*(M2)+0.5*(1.8, 0)+0.5*(M1)+0.5*(2*\dist,0)-0.5*(0.3,0)$) {$\times$};
      \node (timesk) at ($0.5*(1.6*\dist,0)+0.5*(1.8, 0)+0.5*(Mk)-0.5*(0.3,0)$) {$\times$};
      \node at ($0.5*(times2)+0.5*(timesk)$) {$\cdots$};
      \draw[-] plot[smooth, tension=1] coordinates {(Mk) ($(Mk)+(0.4, 0.55)$) ($(Mk)+(1, 0.8)$)};
      \draw[-] plot[smooth, tension=1] coordinates {($(Mk)+(1, 0.8)$) ($(Mk)+(1.5, 0.6)$) ($(Mk)+(1.8, 0.2)$)};
      \draw[-] plot[smooth, tension=1] coordinates {(Mk) ($(Mk)+(0.4, -0.1)$) ($(Mk)+(0.8, -0.4)$)};
      \draw[-] plot[smooth, tension=1] coordinates {($(Mk)+(0.8, -0.4)$) ($(Mk)+(1.2, 0)$) ($(Mk)+(1.8, 0.2)$)};
      \node at ($(Mk)+(0.3,1)$) {$\mathcal{M}_K$};
      \node at ($(Mk)-(0.2,1)$) {$\mathcal{E}_K$};
      \draw[->] (Ek) -- ($(Ek)-(0.6,0.8)$);
      \draw[->] (Ek) -- ($(Ek)+(2,0)$);
      \draw[->] (Ek) -- ($(Ek)+(0,1.5)$);

      \def\height{-2.3};
      \node (times0) at ($2*(times1)-(times2)-(0.3,0)$) {$=$};
      \node (M) at ($2*(times0)-(M1)+(0.2,0)$) {$\tensM$};
      \node (M) at ($2*(times0)-(M1)+(-0.25,\height)$) {$g_x(\xi,\eta)$};

      \node at ($(times0)+(0,\height)$) {$=$};
      \node (g1) at ($0.5*(times0)+0.5*(times1)+(0,\height)$) {$g^1_{x_1}(\xi_1,\eta_1)$};
      \node at ($(times1)+(0,\height)$) {$+$};
      \node (g2) at ($0.5*(times1)+0.5*(times2)+(0,\height)$) {$g^2_{x_2}(\xi_2,\eta_2)$};
      \node at ($(times2)+(0,\height)$) {$+$};
      \node at ($0.5*(times2)+0.5*(timesk)+(0,\height)$) {$\cdots$};
      \node at ($(timesk)+(0,\height)$) {$+$};
      \node (gK) at ($(timesk)+0.5*(times2)-0.5*(times1)+(0,\height)$) {$g^K_{x_K}(\xi_K,\eta_K)$};

      \def\height1{-3};
      \node at ($(times0)+(0,\height1)$) {$=$};
      \node at ($0.5*(times0)+0.5*(times1)+(0,\height1)$) {$\langle\xi_1,\bartensH_1(x)[\eta_1]\rangle$};
      \node at ($(times1)+(0,\height1)$) {$+$};
      \node at ($0.5*(times1)+0.5*(times2)+(0,\height1)$) {$\langle\xi_2,\bartensH_2(x)[\eta_2]\rangle$};
      \node at ($(times2)+(0,\height1)$) {$+$};
      \node at ($0.5*(times2)+0.5*(timesk)+(0,\height1)$) {$\cdots$};
      \node at ($(timesk)+(0,\height1)$) {$+$};
      \node at ($(timesk)+0.5*(times2)-0.5*(times1)+(0.3,\height1)$) {$\langle\xi_K,\bartensH_K(x)[\eta_K]\rangle$};

  \end{tikzpicture}
  \caption{A new metric on the product manifold $\tensM$.}
  \label{fig: developing metrics on product manifolds}
\end{figure}

We propose three types of preconditioning approaches. The first type is exact block diagonal preconditioning. Instead of approximating the full Riemannian Hessian, which can be computationally unfavorable in practice, we benefit from the block structure of $\Hess_{\eucmetric}\!f(x)$ and construct a new metric by exploiting the diagonal blocks; see an illustration in~\cref{fig: developing metrics on product manifolds}. Specifically, given $x=(x_1,x_2,\dots,x_K)\in\tensM$, we can take advantage of the diagonal blocks $H_{kk}(x)$ of $\Hess_{\eucmetric}\!f(x)$ and construct a metric
\[g_{x_k}^k(\xi_k,\eta_k)=\langle\xi_k,H_{kk}(x)[\eta_k]\rangle\quad\text{for }\xi_k,\eta_k\in\tangent_{x_k}\!\tensM_k\]
if $H_{kk}(x)$ is positive definite; see~\cref{subsec: diagonal approximation}. The second type is left and right preconditioning. If at least one of the diagonal blocks is not positive definite, we construct positive-definite operators $\bartensH_k(x):\tangent_{x_k}\!\tensE_k\to\tangent_{x_k}\!\tensE_k$ for $k=1,2,\dots,K$ that aim to approximate the linear terms in $H_{kk}(x)$ and propose a metric 
\[g_{x_k}^k(\xi_k,\eta_k)=\langle\xi_k,\matM_{k,1}(x)\eta_k\matM_{k,2}(x)\rangle\quad\text{for }\xi_k,\eta_k\in\tangent_{x_k}\!\tensM_k;\] see~\cref{subsec: left and right}. The Riemannian metric on $\tensM$ for both diagonal types is defined by the sum of the Riemannian metric on each component, i.e.,
\[g_x(\xi,\eta)=g_{x_1}^1(\xi_1,\eta_1)+g_{x_2}^2(\xi_2,\eta_2)+\cdots+g_{x_K}^K(\xi_K,\eta_K).\]
Thirdly, we propose Gauss--Newton type preconditioning technique for minimization of $\frac12\|F(x)\|_2^2$ by constructing the metric
\[g_x(\xi,\eta)=\langle\xi,((\mathrm{D}F(x))^*\circ\mathrm{D}F(x))[\eta]\rangle\quad\text{for }\xi,\eta\in\tangent_{x}\!\tensM;\] see~\cref{subsec: Gauss--Newton}.

By virtue of the new metric, we propose the Riemannian gradient descent and Riemannian conjugate gradient methods, and the condition number-related convergence results are developed. The preconditioned metric expands the scope of Riemannian preconditioning in~\cite{mishra2016riemannian} since it facilitates flexible choices of the operator $\bartensH(x)$. It is worth noting that exploiting more second-order information can improve the performance of Riemannian methods, but there is a trade-off between the increased cost brought by preconditioned metrics and the efficiency of preconditioned methods. The existing preconditioning methods that can be interpreted by the proposed framework are listed in~\cref{tab: covered existing works}.

\begin{table}[htbp]
  \centering
  \renewcommand\arraystretch{1.5}
  \scriptsize
  \caption{Existing and our works interpreted by preconditioned metrics. MC: matrix completion; TC: tensor completion; CP: canonical polyadic; TT: tensor train; TR: tensor ring; ``$*$'': non-sigular matrices or tensors. }
  \label{tab: covered existing works}
  \resizebox{\textwidth}{!}{
  \begin{tabular}{llll}
      \toprule
      Problem & Methods & Search space $\tensM$ & Category \\
      \midrule
      MC~\cite{mishra2012riemannian}
       & RGD, RCG, RTR & $\mathbb{R}_*^{m\times r}\times\mathbb{R}_*^{n\times r}$ & Exact block diagonal
      \\ 
      Matrix sensing~\cite{tong2021accelerating} & ScaledGD
       & $\mathbb{R}_*^{m\times r}\times\mathbb{R}_*^{n\times r}$ & Exact block diagonal
      \\ 
      Tucker TC~\cite{mishra2016riemannian} & RCG & 
      $\times_{k=1}^3\St(r_k,n_k)\times\mathbb{R}^{r_1\times r_2\times r_3}$ & Exact block diagonal 
      \\ 
      CP TC~\cite{dong2022new} & RGD, RCG & $\times_{k=1}^K\mathbb{R}^{n_k\times r}$ & Exact block diagonal\\ 
      {TT} TC~\cite{cai2022tensor} & RGD, RCG, RGN & $\times_{k=1}^K\mathbb{R}_{*}^{r_{k-1}\times n_k\times r_k}$ & Exact block diagonal\\ 
      {TR} TC~\cite{gao2024riemannian} & RGD, RCG & $\times_{k=1}^K\mathbb{R}^{n_k\times r_{k-1}r_k}$ & Exact block diagonal\\ 
      CCA~\cite{yger2012adaptive,shustin2023riemannian} & RCG & $\St_{\Sigma_{xx}}(m,d_x)\times\St_{\Sigma_{yy}}(m,d_y)$ & Left and right\\ 
      {CCA} (this work) & RGD, RCG & $\St_{\Sigma_{xx}}(m,d_x)\times\St_{\Sigma_{yy}}(m,d_y)$ & Left and right\\ 
      SVD (this work) & RGD, RCG & $\St(p,m)\times\St(p,n)$ & Left and right\\ 
      {TR} TC~(this work) & Gauss--Newton & $\times_{k=1}^K\mathbb{R}^{n_k\times r_{k-1}r_k}$ & Gauss--Newton type\\ 
      \bottomrule
  \end{tabular}
  }
\end{table}

Furthermore, we construct novel preconditioned metrics and apply RGD and RCG to canonical correlation analysis (CCA) and truncated singular value decomposition (SVD). We compute the condition numbers of the Riemannian Hessian of the cost function at a local minimizer for these problems. We show that the the proposed metrics indeed improve the condition numbers of Hessian and thus are able to accelerate the Riemannian methods. In addition, we propose the Gauss--Newton method for tensor ring completion. Numerical results among three applications validate the effectiveness of the proposed preconditioning framework, and these methods remain a comparable computational cost with the existing Riemannian methods.

\paragraph{Organization}
We introduce the concepts in Riemannian optimization on product manifolds and present the convergence properties in~\cref{sec: optim on product manifolds}. We propose a general framework and three specific approaches for developing a preconditioned metric on product manifolds in~\cref{sec: develop preconditioned metric on product manifolds}. We apply the proposed framework to solve the canonical correlation analysis and truncated singular value decomposition in~\cref{sec: CCA,sec: SVD}. The Gauss--Newton method for tensor ring completion is proposed in~\cref{sec: LRMC and LRTC}. Finally, we draw the conclusion in~\cref{sec: conclusions}.

\section{Optimization on product manifolds}
\label{sec: optim on product manifolds}
In this section, we provide basic tools in Riemannian geometry on product manifolds and develop the Riemannian gradient descent and Riemannian conjugate gradient methods for optimization on product manifolds; see~\cite{absil2009optimization,boumal2023intromanifolds} for an overview. Metric-based and condition number-related convergence properties are developed.

\subsection{Riemannian optimization on product manifolds}
A product manifold~$\tensM$ is defined by the Cartesian product of manifolds, i.e., \[\tensM=\tensM_1\times\tensM_2\times\cdots\times\tensM_K.\] Assume that $\tensM$ is embedded in a Euclidean space $\tensE=\tensE_1\times\tensE_2\times\cdots\times\tensE_K$, which is called the \emph{ambient space}. 
The tangent space of $\tensM$ at $x=(x_1,x_2,\dots,x_K)$ is denoted by $\tangent_x\!\tensM=\tangent_{x_1}\!\tensM_1\times\tangent_{x_2}\!\tensM_2\times\cdots\times\tangent_{x_K}\!\tensM_K$, and a tangent vector is denoted by $\eta=(\eta_1,\eta_2,\dots,\eta_K)$.
Let each manifold $\tensM_k$ be endowed with a \emph{Riemannian metric}~$g^k$. The Riemannian metric on the product manifold $\tensM$ can be defined by
\[g_x^{}(\xi,\eta):=g_{x_1}^1(\xi_1,\eta_1)+g_{x_2}^2(\xi_2,\eta_2)+\cdots+g_{x_K}^K(\xi_K,\eta_K)\]
for $\xi,\eta\in\tangent_x\!\tensM$, which induces a norm $\|\eta\|_x:=\sqrt{g_x(\eta,\eta)}$.
Given a vector $\bareta=(\bareta_1,\bareta_2,\dots,\bareta_K)\in\tangent_x\!\tensE\simeq\tensE$, the orthogonal projection operator onto $\tangent_x\!\tensM$ with respect to the metric $g$ is $\Pi_{g,x}(\bareta):=(\Pi_{g^1,x_1}(\bareta_1),\Pi_{g^2,x_2}(\bareta_2),\dots,\Pi_{g^K,x_K}(\bareta_K))$,
where each $\Pi_{g^k,x_k}$ refers to orthogonal projection operator with respect to the metric $g^k$ onto $\tangent_{x_k}\!\tensM_k$ for $k=1,2,\dots,K$. Let $\tangent\!\tensM:=\cup_{x\in\tensM}\tangent_x\!\tensM$ be the \emph{tangent bundle}. A smooth mapping $\retr:\tangent\!\tensM\to\tensM$ satisfying $\retr_x(0_x)=x$ and $\mathrm{D}\!\retr_x(0_x)=\mathrm{I}_x$ is called a \emph{retraction}, where $0_x\in\tangent_x\!\tensM$ is the zero element, $\mathrm{D}\!\retr_x(0_x)$ is the differential of $\retr_x$ at $0_x$, and $\mathrm{I}_x:\ \tangent_x\!\tensM\to\tangent_x\!\tensM$ is the identity operator on $\tangent_x\!\tensM$. A retraction on a product manifold $\tensM$ is defined by $\retr_x(\eta):=(\retr_{x_1}^1(\eta_1),\retr_{x_2}^2(\eta_2),\dots,\retr_{x_K}^K(\eta_K))$,
where $\retr^k$ is a retraction on $\tensM_k$.
The \emph{vector transport} operator is denoted by $\tensT_{y\gets x}: \tangent_x\!\tensM\to\tangent_y\!\tensM$ for $x,y\in\tensM$. 

Consider a smooth function $f:\tensM\to\mathbb{R}$. The \emph{Riemannian gradient} under the metric $g$ is denoted by $\grad_g f(x)$, which is the unique tangent vector satisfying $g_x(\grad_g f(x),\eta)=\mathrm{D} f(x)[\eta]$ for all $\eta\in\tangent\!_x\tensM$, where $\mathrm{D}f(x)[\eta]$ refers to the differential of $f$ at $x$ along $\eta$. 
The \emph{Riemannian Hessian} operator of $f$ at $x$ with respect to~$g$ is defined by $\Hess_g\!f(x)[\eta]:=\boldsymbol{\nabla}_\eta\grad_g f$, where $\boldsymbol{\nabla}$ refers to \emph{Levi--Civita connection} on~$\tensM$. If $\tensM$ is a Riemannian submanifold of the Euclidean space $\tensE$, it follows from~\cite[Corollary 5.1.6]{boumal2023intromanifolds} that 
\begin{equation}\label{eq: Riemannian Hessian}
  \Hess_{\eucmetric}\!f(x)[\eta]=\Pi_{\eucmetric,x}(\mathrm{D}\bar{G}(x)[\eta]),
\end{equation}
where $\bar{G}$ is a smooth extension of $\grad_{\eucmetric} f(x)$ on a neighborhood of $\tensM$, $\grad_{\eucmetric} f(x)$ and $\Hess_{\eucmetric}\!f(x)$ are the Riemannian gradient and Riemannian Hessian of $f$ under the Euclidean metric.

By assembling the required ingredients, we present the Riemannian gradient descent and Riemannian conjugate gradient methods in~\cref{alg: RGD,alg: RCG}. 
We refer to~\cite{absil2009optimization,sato2022riemannian} for the global convergence of RGD and RCG.

\begin{algorithm}[htbp]
  \caption{Riemannian gradient descent method (RGD)}\label{alg: RGD}
  \begin{algorithmic}[1]
      \REQUIRE Riemannian manifold $(\tensM,g)$, initial guess $x^{(0)}\in\tensM$, $t=0$. 
      \WHILE{the stopping criteria are not satisfied}
          \STATE Compute $\eta^{(t)}=-\grad_g f(x^{(t)})$.
          \STATE Compute a stepsize $s^{(t)}$.
          \STATE Update $x^{(t+1)}=\retr_{x^{(t)}} (s^{(t)}\eta^{(t)})$; $t= t+1$.
      \ENDWHILE
      \ENSURE $x^{(t)}\in\tensM$. 
  \end{algorithmic}
\end{algorithm}
\begin{algorithm}[htbp]
  \caption{Riemannian conjugate gradient method (RCG)}\label{alg: RCG}
  \begin{algorithmic}[1]
      \REQUIRE Riemannian manifold $(\tensM,g)$, initial guess $x^{(0)}\in\tensM$, $t=0$,  $\beta^{(0)}=0$. 
      \WHILE{the stopping criteria are not satisfied}
          \STATE Compute $\eta^{(t)}=-\grad_g f(x^{(t)})+\beta^{(t)}\tensT_{x^{(t)}\gets x^{(t-1)}}\eta^{(t-1)}$, where $\beta^{(t)}$ is a conjugate gradient parameter.
          \STATE Compute a stepsize $s^{(t)}$. 
          \STATE Update $x^{(t+1)}=\retr_{x^{(t)}} (s^{(t)}\eta^{(t)})$; $t= t+1$.
      \ENDWHILE
      \ENSURE $x^{(t)}\in\tensM$. 
  \end{algorithmic}
\end{algorithm}

Observe that the Riemannian gradients in RGD and RCG depend on the chosen metric $g$. In other words, the Riemannian methods are metric-dependent. Moreover, the computational cost in the updates of RGD and RCG varies with different metrics. Therefore, choosing an appropriate metric is apt to improve the performance of Riemannian methods. 

\begin{definition}[critical points]
  Given a smooth function $f$ defined on a manifold $\tensM$ endowed with a metric $g$, a point $x^*\in\tensM$ is called a critical point of $f$ if~$\grad_g f(x^*)=0$. 
\end{definition}

Note that the definition of Riemannian gradient relies on the metric $g$, whereas the set of critical points of $f$ is invariant to the choice of metrics; see the following proposition.
\begin{proposition}\label{prop: diff metric}
  Given a smooth function $f$ defined on a manifold $\tensM$. Consider two Riemannian manifolds $(\tensM,g)$ and $(\tensM,\tilde{g})$, 
  it holds that
  \[g_{x}^{}(\grad_{g} f(x),\grad_{\tilde{g}} f(x))\geq 0\quad\text{and}\quad \tilde{g}_{x}^{}(\grad_{g} f(x),\grad_{\tilde{g}} f(x))\geq 0\]
  for $x\in\tensM$. The equality holds if and only if $x$ is a critical point. Moreover, $\grad_{g} f(x)=0$ if and only if $\grad_{\tilde{g}} f(x)=0$.
\end{proposition}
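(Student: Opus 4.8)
The plan is to exploit the defining relation of the Riemannian gradient, namely that $g_x(\grad_g f(x),\eta) = \mathrm{D}f(x)[\eta]$ for all $\eta$, and likewise $\tilde g_x(\grad_{\tilde g} f(x),\eta) = \mathrm{D}f(x)[\eta]$. First I would fix $x\in\tensM$ and abbreviate $u:=\grad_g f(x)$ and $v:=\grad_{\tilde g} f(x)$, both tangent vectors at $x$. Plugging $\eta = v$ into the first defining relation gives $g_x(u,v) = \mathrm{D}f(x)[v]$, and plugging $\eta = u$ into the second gives $\tilde g_x(v,u) = \mathrm{D}f(x)[u]$. Next, applying the first relation with $\eta = u$ yields $g_x(u,u) = \mathrm{D}f(x)[u] = \|u\|_{g,x}^2 \geq 0$, and symmetrically $\tilde g_x(v,v) = \mathrm{D}f(x)[v] = \|v\|_{\tilde g,x}^2 \geq 0$. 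Combining these, $g_x(u,v) = \mathrm{D}f(x)[v] = \tilde g_x(v,v) \geq 0$ and $\tilde g_x(u,v) = \tilde g_x(v,u) = \mathrm{D}f(x)[u] = g_x(u,u) \geq 0$, which establishes both displayed inequalities.

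For the equality characterization, note that $g_x(u,v) = \tilde g_x(v,v)$, so $g_x(u,v) = 0$ iff $\tilde g_x(v,v) = 0$ iff $v = 0$ (since $\tilde g$ is positive definite), i.e. iff $x$ is a critical point of $f$ (by the definition of critical point applied to $(\tensM,\tilde g)$); similarly $\tilde g_x(u,v) = g_x(u,u) = 0$ iff $u = 0$ iff $x$ is a critical point with respect to $(\tensM,g)$. Finally, for the last assertion: if $\grad_g f(x) = u = 0$, then $\mathrm{D}f(x)[\eta] = g_x(u,\eta) = 0$ for all $\eta\in\tangent_x\tensM$, hence $\tilde g_x(v,\eta) = \mathrm{D}f(x)[\eta] = 0$ for all $\eta$, and taking $\eta = v$ forces $\tilde g_x(v,v) = 0$, so $v = \grad_{\tilde g} f(x) = 0$; the converse is identical by symmetry. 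This also shows the two notions of "critical point" coincide, closing the small logical gap in the equality statement (that "$x$ is a critical point" is unambiguous).

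I do not anticipate a genuine obstacle here: the entire argument is a direct unwinding of the variational characterization of the Riemannian gradient together with positive-definiteness of the metrics, and it does not even use the product structure of $\tensM$. The one point requiring a little care is making the equality clause precise — one should be explicit that "$x$ is a critical point" is independent of which metric is used (which is exactly what the last sentence of the proposition asserts), so the logic should be organized to first prove the last sentence, or to prove it in parallel, rather than invoking it circularly. A secondary minor point is to confirm that all the quantities $u,v$ lie in the same tangent space $\tangent_x\tensM$ so that the cross terms $g_x(u,v)$ and $\tilde g_x(u,v)$ are well defined; this is immediate since both Riemannian gradients at $x$ are by definition elements of $\tangent_x\tensM$.
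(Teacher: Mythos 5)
Your proof is correct, and it takes a genuinely different (and arguably cleaner) route than the paper's. The paper works in a local chart: it writes the gradients in coordinates as $\hat{\zeta}=\matG^{-1}\nabla\hat{f}$ and $\hat{\chi}=\tilde{\matG}^{-1}\nabla\hat{f}$ and reduces both inequalities to the positive definiteness of the quadratic form $(\nabla\hat{f})^\T\tilde{\matG}^{-1}\nabla\hat{f}$ (resp.\ $\matG^{-1}$), with equality iff $\nabla\hat{f}(\hat{x})=0$. You instead stay coordinate-free and unwind the variational characterization $g_x(\grad_g f(x),\eta)=\mathrm{D}f(x)[\eta]$ directly, arriving at the identities $g_x(u,v)=\mathrm{D}f(x)[v]=\|v\|_{\tilde g,x}^2$ and $\tilde g_x(u,v)=\mathrm{D}f(x)[u]=\|u\|_{g,x}^2$; these are exactly the paper's quadratic forms in disguise, but obtained without choosing a chart. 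Your version buys an intrinsic argument with no appeal to coordinate expressions of the gradient, and it makes transparent \emph{which} gradient's vanishing controls each of the two equalities (the $g$-inequality is tight iff $\grad_{\tilde g}f(x)=0$, and vice versa); the paper's version buys a one-line matrix computation for readers comfortable with the coordinate formula. You are also right to flag, and resolve, the mild circularity in the phrase ``$x$ is a critical point'': proving the metric-independence of criticality first (or in parallel, as you do via $\mathrm{D}f(x)=0$) is exactly what is needed, and the paper handles the same point by observing that equality forces $\nabla\hat{f}(\hat{x})=0$, which kills both gradients simultaneously.
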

\begin{proof}
    See~\cref{app: proof of diff metric}.
\end{proof}

The second-order critical point of $f$ is defined as follows. 
\begin{definition}[second-order critical points]
  Given a smooth function $f$ defined on a manifold $\tensM$ endowed with a metric $g$, a critical point $x^*\in\tensM$ of $f$ is called a second-order critical point of $f$ if $\Hess_g\!f(x^*)$ is positive semidefinite. Furthermore, if $\Hess_g\!f(x^*)$ is positive definite, then $x^*$ is a local minimizer for~\cref{eq: optimization problem}.
\end{definition}

Note that the set of second-order critical points is also invariant in terms of metrics; see~\cite[Proposition 6.3]{boumal2023intromanifolds}. Specifically, if $x^*$ is a second-order critical point of~$f$, it holds that $\Hess_g\!f(x^*)$ is positive semidefinite if and only if  $\Hess_{\tilde{g}}\!f(x^*)$ is positive semidefinite for different metrics $g$ and $\tilde{g}$.

\subsection{Local convergence properties}
\label{subsec: local convergence}
We present the local convergence properties of the Riemannian gradient descent method in terms of condition numbers. Specifically, the Armijo backtracking line search is applied to computing the stepsize in~\cref{alg: RGD}.
\begin{definition}[Armijo backtracking line search]
  Given a smooth function $f$ defined on a manifold $\tensM$ endowed with a metric $g$,  a point $x\in\tensM$, a vector $\eta\in\tangent_x\!\tensM$, an initial stepsize $s_0>0$, and constants $\rho,a\in(0,1)$. The Armijo backtracking line search aims to find the smallest non-negative integer $\ell$, such that for $s=\rho^\ell s_0$, the condition
  \begin{equation}
    \label{eq: Armijo backtracking line search}
    f(x)-f(\retr_x(s\eta))\geq -sag_x(\grad_g f(x),\eta)
  \end{equation} 
  holds.
\end{definition}

In Riemannian optimization, the condition number of the Riemannian Hessian at the local minimizer $x^*$ is crucial to the local converge rate of Riemannian methods; see, e.g.,~\cite[Theorem 4.5.6]{absil2009optimization} and~\cite[Theorem 4.20]{boumal2023intromanifolds}. The condition number of the Riemannian Hessian $\Hess_g\!f(x^*)$ is defined by 
\begin{equation}
    \label{eq: condition number of Hessian}
    \cond_g(\Hess_g\!f(x^*)):=\frac{\lambda_{\max}(\Hess_g\!f(x^*))}{\lambda_{\min}(\Hess_g\!f(x^*))}=\frac{\sup_{\eta\in\tangent_{x^*}\!\tensM}q_{x^*}(\eta)}{\inf_{\eta\in\tangent_{x^*}\!\tensM}q_{x^*}(\eta)},
\end{equation}
where $\lambda_{\min}(\Hess_g\!f(x^*))$ and $\lambda_{\max}(\Hess_g\!f(x^*))$ denote the smallest and largest eigenvalue of $\Hess_g\!f(x^*)$, and
\begin{equation}
    \label{eq: Rayleigh quotient}
    q_{x^*}(\eta):=\frac{g_{x^*}(\eta,\Hess_g\!f(x^*)[\eta])}{g_{x^*}(\eta,\eta)}
\end{equation}
refers to the Rayleigh quotient, which depends on the metric $g$. Then, the local convergence rate of RGD with Armijo backtracking line search~\cref{eq: Armijo backtracking line search} for optimization on product manifolds can be proved by following~\cite[Theorem 4.5.6]{absil2009optimization}. 
\begin{theorem}\label{thm: local convergence}
  Let $\{x^{(t)}\}_{t=0}^{\infty}$ be an infinite sequence generated by~\cref{alg: RGD} with backtracking line search~\cref{eq: Armijo backtracking line search} converging to a local minimizer $x^*$. There exists $T>0$, such that for all $t>T$, it holds that
  \[\frac{f(x^{(t)})-f(x^*)}{f(x^{(t-1)})-f(x^*)}\leq 1-\min\{2as_0\lambda_{\min}(\Hess_g\!f(x^*)),\frac{4a(1-a)\rho}{\kappa_g(\Hess_g\!f(x^*))}\}.\]
\end{theorem}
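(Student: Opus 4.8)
The plan is to follow the classical argument for local linear convergence of gradient descent with Armijo backtracking, as in~\cite[Theorem 4.5.6]{absil2009optimization}, adapted to the product-manifold setting with the metric $g$. First I would reduce everything to a neighborhood of $x^*$ in which, using the compactness of sublevel sets along the (convergent) sequence and smoothness, one obtains uniform two-sided bounds on the Rayleigh quotient~\cref{eq: Rayleigh quotient}: there exist constants $0<\mu\le L$ with $\mu\le q_x(\eta)\le L$ for all $x$ near $x^*$ and all nonzero $\eta\in\tangent_x\!\tensM$, where $\mu$ and $L$ can be taken arbitrarily close to $\lambda_{\min}(\Hess_g\!f(x^*))$ and $\lambda_{\max}(\Hess_g\!f(x^*))$ respectively by shrinking the neighborhood; consequently $L/\mu$ is arbitrarily close to $\kappa_g(\Hess_g\!f(x^*))$. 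A second-order Taylor expansion of $t\mapsto f(\retr_x(t\eta))$ along the retraction curve (valid since $\retr$ is smooth and $\mathrm D\!\retr_x(0_x)=\mathrm I_x$) then yields the standard quadratic upper and lower models of $f$ near $x^*$ governed by $\mu$ and $L$.

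Next I would analyze one Armijo step from $x=x^{(t-1)}$ with search direction $\eta=-\grad_g f(x)$. Using the quadratic upper bound, the Armijo condition~\cref{eq: Armijo backtracking line search} is satisfied whenever the stepsize $s$ is below a threshold of order $1/L$; since backtracking reduces $s_0$ by factors of $\rho$, the accepted stepsize satisfies $s^{(t-1)}\ge\min\{s_0,\,2(1-a)\rho/L\}$. Plugging the accepted stepsize into the guaranteed Armijo decrease $f(x)-f(\retr_x(s\eta))\ge s\,a\,\|\grad_g f(x)\|_x^2$ gives a decrease of at least $a\min\{s_0,2(1-a)\rho/L\}\,\|\grad_g f(x)\|_x^2$. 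Finally, combining the lower quadratic model with the gradient bound — i.e., a Polyak–Łojasiewicz-type inequality $\|\grad_g f(x)\|_x^2\ge 2\mu\,(f(x)-f(x^*))$ coming from $\mu\le q_x(\eta)$ — produces
\[\frac{f(x^{(t)})-f(x^*)}{f(x^{(t-1)})-f(x^*)}\le 1-2a\mu\min\Bigl\{s_0,\ \frac{2(1-a)\rho}{L}\Bigr\}=1-\min\Bigl\{2as_0\mu,\ \frac{4a(1-a)\rho\,\mu}{L}\Bigr\}.\]
Letting the neighborhood shrink so that $\mu\to\lambda_{\min}(\Hess_g\!f(x^*))$ and $\mu/L\to1/\kappa_g(\Hess_g\!f(x^*))$, and absorbing the resulting perturbation into the choice of $T$ (i.e., the estimate holds for all $t>T$), gives the stated bound.

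The main obstacle is making the passage from the Hessian at $x^*$ to uniform estimates on a neighborhood fully rigorous: the Rayleigh quotient $q_x(\eta)$ involves the Riemannian Hessian $\Hess_g\!f(x)$ and the metric $g$ at the varying point $x$, and one must argue continuity of $x\mapsto\Hess_g\!f(x)$ (in an appropriate operator-norm sense relative to $g_x$) together with the fact that $\Hess_g\!f(x^*)\succ0$ to get the two-sided bound on a whole neighborhood — and then control the discrepancy between the second-order Taylor model along the retraction and the Hessian quadratic form, uniformly in $x$. On a product manifold this is handled componentwise via the block structure of $g$, $\retr$, and $\Hess_g\!f$, but care is needed because the metric $g$ here is the general (possibly preconditioned) one rather than a fixed canonical metric. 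Everything else — the Armijo stepsize lower bound and the telescoping of the linear-rate inequality — is routine once these uniform estimates are in place, and for this reason I would present the full details by invoking and lightly adapting~\cite[Theorem 4.5.6]{absil2009optimization}.
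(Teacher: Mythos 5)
Your proposal is correct and follows exactly the route the paper takes: the paper gives no independent argument but simply asserts that the theorem "can be proved by following~\cite[Theorem 4.5.6]{absil2009optimization}", which is precisely the classical Armijo-backtracking analysis (uniform Rayleigh-quotient bounds near $x^*$, quadratic models along the retraction, stepsize lower bound $\min\{s_0,2(1-a)\rho/L\}$, and the resulting linear rate) that you outline. Your derivation of the constants $2as_0\lambda_{\min}$ and $4a(1-a)\rho/\kappa_g$ matches the stated bound, so no further comparison is needed.
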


It is worth noting that different metrics can lead to different $\lambda_{\min}(\Hess_g\!f(x^*))$ and $\kappa_g(\Hess_g\!f(x^*))$ by~\cref{eq: condition number of Hessian}, which affect the local convergence rate. More precisely, a lower condition number indicates a faster convergence in RGD.

\section{Developing preconditioned metric on product manifolds}
\label{sec: develop preconditioned metric on product manifolds}
We first propose a general framework for developing a preconditioned metric on a product manifold $\tensM$ by constructing an operator $\bartensH(x)$ that aims to approximate the diagonal blocks of Riemannian Hessian. Next, we develop three specific approaches to construct the operator $\bartensH(x)$.

Generally, we first propose to endow $\tensE$ with a metric $\barg$ by designing a self-adjoint and positive-definite linear operator $\bartensH$ on the tangent bundle $\tangent\!\tensE$ such that 
\begin{equation}
  \label{eq: original idea of Riemannian preconditioning}
  \barg_x(\xi,\eta)=\langle\xi,\bartensH(x)[\eta]\rangle\approx\langle\xi,\Hess_{\eucmetric}\!f(x)[\eta]\rangle\quad\text{for }\xi,\eta\in\tangent_{x}\!\tensM.
\end{equation}
The rationale behind~\cref{eq: original idea of Riemannian preconditioning} is that the metric $\barg$, aiming to approximate the second-order information of $\Hess_{\eucmetric}\!f(x)$ on $\tangent_x\!\tensM$, is able to facilitate the computation of the Riemannian gradient. Then, the Riemannian metric $g$ on $\tensM$ is inherited from $\tensE$ in view of the Riemannian submanifold. Since $\tensM=\tensM_1\times\tensM_2\times\cdots\times\tensM_K$ is a product manifold, it follows from~\cite[Example 5.19]{boumal2023intromanifolds} that the Riemannian Hessian of~$f$ at $x=(x_1,x_2,\dots,x_K)$ along $\eta=(\eta_1,\eta_2,\dots,\eta_K)$ enjoys a block structure, i.e.,  
\begin{equation}\label{eq: block Hessian}
  \begin{aligned}
    \Hess_{\eucmetric}\!f(x)[\eta]=(&{H_{11}(x)}[\eta_1]~+H_{12}(x)[\eta_2]~+\cdots+H_{1K}(x)[\eta_K],\\
    &H_{21}(x)[\eta_1]~+H_{22}(x)[\eta_2]~+\cdots+H_{2K}(x)[\eta_K],\\
    &\qquad\qquad\qquad\qquad\qquad\ \ ~~\vdots\\
    &H_{K1}(x)[\eta_1]+H_{K2}(x)[\eta_2]+\cdots+{H_{KK}(x)}[\eta_K]),    
  \end{aligned}
\end{equation}
where $H_{ij}(x)[\eta_j]:=\Hess_{\eucmetric}\!f(x_1,\dots,x_{i-1},\cdot,x_{i+1},\dots,x_K)(x_i)[\eta_i]$ if $i=j$, $H_{ij}(x)[\eta_j]:=\mathrm{D} G_i(x_1,\dots,x_{j-1},\cdot,x_{j+1},\dots,x_K)(x_j)[\eta_j]$ if $i\neq j$ and $f(x_1,\dots,x_{i-1},\cdot,x_{i+1},\dots,x_K)$ denotes the function that $f$ is restricted on $\tensM_i$. 
The operator $G_i:\tensM\to\tangent_{x_i}\!\tensM_i$, $G_i(x):=\grad_{\eucmetric} f(x_1,\dots,x_{i-1},\cdot,x_{i+1},\dots,x_K)(x_i)$, gives the Riemannian gradient of the above function. 
The restriction of $G_i$ on $\tensM_j$ is $G_i(x_1,\dots,x_{j-1},\cdot,x_{j+1},\dots,x_K):\tensM_j\to\tangent_{x_i}\!\tensM_i$. In the light of the Riemannian submanifold, the Riemannian gradient of $f$ at $x\in\tensM$ with respect to $g$ can be computed by following~\cite[(3.37)]{absil2009optimization}. 
\begin{proposition}\label{prop: Riemannian gradient as Riemannian submanifold}
  Let $(\tensM,g)$ be a Riemannian submanifold of~$(\tensE,\barg)$. Given a function $f:\tensM\to\mathbb{R}$ and its smooth extension $\barf:\tensE\to\mathbb{R}$, the Riemannian gradient of $f$ at $x\in\tensM$ can be computed by
  \[\grad_g f(x)=\Pi_{g,x}(\bartensH(x)^{-1}[\nabla \barf(x)]),\]
  where $\Pi_{g,x}:\tangent_x\!\tensE\simeq\tensE\to\tangent_x\!\tensM$ is the orthogonal projection operator with respect to the metric $g$ onto $\tangent_x\!\tensM$, and $\nabla \barf(x)$ denotes the Euclidean gradient of $\barf$.  
\end{proposition}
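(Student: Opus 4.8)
The plan is to reduce the statement to the standard formula for the Riemannian gradient of the restriction of a function to a Riemannian submanifold, after first identifying the Riemannian gradient of the extension $\barf$ on the ambient manifold $(\tensE,\barg)$ explicitly in terms of $\bartensH(x)$ and the Euclidean gradient.

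First I would compute $\grad_{\barg}\barf(x)$. By definition it is the unique vector in $\tangent_x\!\tensE\simeq\tensE$ satisfying $\barg_x(\grad_{\barg}\barf(x),\eta)=\mathrm{D}\barf(x)[\eta]$ for all $\eta\in\tensE$. Using $\barg_x(\xi,\eta)=\langle\xi,\bartensH(x)[\eta]\rangle=\langle\bartensH(x)[\xi],\eta\rangle$ (self-adjointness of $\bartensH(x)$) together with $\mathrm{D}\barf(x)[\eta]=\langle\nabla\barf(x),\eta\rangle$, the defining identity becomes $\langle\bartensH(x)[\grad_{\barg}\barf(x)],\eta\rangle=\langle\nabla\barf(x),\eta\rangle$ for all $\eta\in\tensE$, hence $\bartensH(x)[\grad_{\barg}\barf(x)]=\nabla\barf(x)$. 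Since $\bartensH(x)$ is positive definite it is invertible, so $\grad_{\barg}\barf(x)=\bartensH(x)^{-1}[\nabla\barf(x)]$.

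Next I would invoke the submanifold gradient formula, e.g.~\cite[(3.37)]{absil2009optimization}: if $(\tensM,g)$ is a Riemannian submanifold of $(\tensE,\barg)$ and $\barf$ extends $f$, then $\grad_g f(x)=\Pi_{g,x}(\grad_{\barg}\barf(x))$, where $\Pi_{g,x}$ is the $\barg$-orthogonal projection onto $\tangent_x\!\tensM$ (which coincides with the $g$-orthogonal projection, since $g$ is the restriction of $\barg$ to $\tangent\!\tensM$). Substituting the expression for $\grad_{\barg}\barf(x)$ obtained above yields $\grad_g f(x)=\Pi_{g,x}(\bartensH(x)^{-1}[\nabla\barf(x)])$, which is the claim.

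The only genuinely delicate point is the justification of the submanifold formula in this non-Euclidean ambient setting, i.e.\ that the intrinsically defined $\grad_g f(x)$ equals the $\barg$-orthogonal projection of $\grad_{\barg}\barf(x)$. This follows because for every $\eta\in\tangent_x\!\tensM$ one has $g_x(\grad_g f(x),\eta)=\mathrm{D}f(x)[\eta]=\mathrm{D}\barf(x)[\eta]=\barg_x(\grad_{\barg}\barf(x),\eta)=\barg_x(\Pi_{g,x}(\grad_{\barg}\barf(x)),\eta)$, where the last equality uses that the $\barg$-orthogonal projection is self-adjoint with respect to $\barg$ and fixes $\tangent_x\!\tensM$; since $g$ is the restriction of $\barg$, uniqueness of the Riemannian gradient on $(\tensM,g)$ then forces $\grad_g f(x)=\Pi_{g,x}(\grad_{\barg}\barf(x))$. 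I expect this identification to be the main (though still routine) obstacle; everything else is bookkeeping with the definitions of $\bartensH$, $\barg$, and the Euclidean gradient.
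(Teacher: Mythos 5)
Your proposal is correct and follows the same route the paper takes: the paper simply cites the standard submanifold gradient formula \cite[(3.37)]{absil2009optimization}, and your argument fills in exactly the two ingredients behind it — identifying $\grad_{\barg}\barf(x)=\bartensH(x)^{-1}[\nabla\barf(x)]$ from the definition of $\barg$ via the self-adjoint, positive-definite operator $\bartensH(x)$, and then projecting onto $\tangent_x\!\tensM$ using the defining property of the Riemannian gradient together with the fact that $g$ is the restriction of $\barg$. No gaps.
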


In view of~\cref{eq: original idea of Riemannian preconditioning,prop: Riemannian gradient as Riemannian submanifold}, the operator $\bartensH(x)$ has a preconditioning effect. Hence, we refer to the metric $g$ as a preconditioned metric, and $\bartensH$ is the \emph{preconditioner}. The methodology of using a preconditioned metric can be deemed a general framework to accelerate the Riemannian methods. Subsequently, we design specific approaches for constructing the operator $\bartensH(x)$.

\subsection{Exact block diagonal preconditioning}\label{subsec: diagonal approximation}
Instead of acquiring the full Riemannian Hessian $\Hess_{\eucmetric}\!f(x)$, which involves the computation of all blocks $H_{ij}(x)$ in~\cref{eq: block Hessian}, we develop a metric in a more economical manner by using the diagonal blocks $H_{11},H_{22},\dots,H_{KK}$, as a trade-off between the efficiency and the computational cost. 

Recall that a Riemannian metric on the product manifold $\tensM$ is defined by the sum of the metrics on each component, i.e., $g_x(\xi,\eta)=\sum_{k=1}^K g_{x_k}^k(\xi_k,\eta_k)$ for $\xi=(\xi_1,\xi_2,\dots,\xi_K),\eta=(\eta_1,\eta_2,\dots,\eta_K)\in\tangent_{x}\!\tensM$. 
Consider the smooth extension $\bar{H}_{kk}(x):\tangent_{x_k}\!\tensE_k\to\tangent_{x_k}\!\tensE_k$ of the diagonal block $H_{kk}$ for $k=1,2,\dots,K$. If $\bar{H}_{11},\bar{H}_{22},\dots,\bar{H}_{KK}$ are positive definite on the ambient space $\tensE$, one can immediately adopt these blocks to construct the operator $\bartensH$ by
\begin{equation}
    \label{eq: exact}
    \bartensH(x)[\eta]=(\bartensH_{1}(x)[\eta_1],\dots,\bartensH_{K}(x)[\eta_K])=(\bar{H}_{11}(x)[\eta_1],\dots,\bar{H}_{KK}(x)[\eta_K]).
\end{equation}
Hence, the metric
\begin{equation*}
    \begin{aligned}
        g_x(\xi,\eta)&=g_{x_1}^1(\xi_1,\eta_1)+g_{x_2}^2(\xi_2,\eta_2)+\cdots+g_{x_K}^K(\xi_K,\eta_K)\\
        &=\langle\xi_1,\bar{H}_{11}(x)[\eta_1]\rangle+\langle\xi_2,\bar{H}_{22}(x)[\eta_2]\rangle+\cdots+\langle\xi_K,\bar{H}_{KK}(x)[\eta_K]\rangle
    \end{aligned}
\end{equation*}
is a well-defined Riemannian metric on $\tensM$, which leads to \emph{exact block diagonal} preconditioning. Note that the proposed Exact block diagonal preconditioning is not applicable when at least one of the blocks $\bar{H}_{11},\bar{H}_{22},\dots,\bar{H}_{KK}$ is not positive definite. In practice, one can consider a regularization term $\delta_k\tensI_k(x)$ with the identity operator $\tensI_k(x):\tangent_{x_k}\!\tensE_k\to\tangent_{x_k}\!\tensE_k$ and some $\delta_k>0$ to ensure that the operator $\bar{H}_{kk}(x)+\delta_k\tensI_k(x)$ is positive definite.

In contrast with the Riemannian Hessian~\cref{eq: block Hessian}, which contains cross terms among $\eta_1,\eta_2,\dots,\eta_K$, the operator $\bartensH(x)$ in~\cref{eq: exact} enjoys a block structure. Therefore, the Riemannian gradient of a smooth function $f:\tensM\to\mathbb{R}$ at $x\in\tensM$ can be computed on each block in view of~\cref{prop: Riemannian gradient as Riemannian submanifold}.

\begin{proposition}\label{prop: Riemannian gradient as Riemannian submanifold of product space}
  Let $\tensM=\tensM_1\times\tensM_2\times\cdots\times\tensM_K$ be a product manifold endowed with the metric~$g$.
  Given a function $f:\tensM\to\mathbb{R}$ and its smooth extension $\barf:\tensE\to\mathbb{R}$, the Riemannian gradient of $f$ at $x$ is 
  \begin{equation}\label{eq: Riemannian gradient on product space}
    \grad_g f(x)=\big(\Pi_{g^1,x_1}(\bartensH_1(x)^{-1}[\partial_1 \barf(x)]),\dots,\Pi_{g^K,x_K}(\bartensH_K(x)^{-1}[\partial_K \barf(x)])\big),
  \end{equation}
  where $\Pi_{g^k,x_k}$ is the orthogonal projection operator with respect to the metric $g^k$ onto $\tangent_{x_k}\!\tensM_k$ for $k=1,2,\dots,K$ and $\partial_k\barf(x)$ is the partial derivative of $f$ with respect to~$x_k$.  
\end{proposition}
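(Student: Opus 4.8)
The plan is to apply Proposition~\ref{prop: Riemannian gradient as Riemannian submanifold} blockwise, exploiting the fact that the preconditioner $\bartensH(x)$ in~\cref{eq: exact} is itself block-diagonal with respect to the decomposition $\tensE = \tensE_1\times\cdots\times\tensE_K$. First I would recall that, since $g$ is the sum metric $g_x(\xi,\eta)=\sum_{k=1}^K g^k_{x_k}(\xi_k,\eta_k)$ with $g^k_{x_k}(\xi_k,\eta_k)=\langle\xi_k,\bartensH_k(x)[\eta_k]\rangle$, the manifold $(\tensM,g)$ is a Riemannian submanifold of $(\tensE,\barg)$ where $\barg_x(\xi,\eta)=\langle\xi,\bartensH(x)[\eta]\rangle$. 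This places us exactly in the hypotheses of Proposition~\ref{prop: Riemannian gradient as Riemannian submanifold}, which gives $\grad_g f(x) = \Pi_{g,x}(\bartensH(x)^{-1}[\nabla\barf(x)])$.

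The second step is to unwind each ingredient of this formula into its block components. The Euclidean gradient decomposes as $\nabla\barf(x) = (\partial_1\barf(x),\dots,\partial_K\barf(x))$ by definition of the partial derivatives. Because $\bartensH(x)$ acts diagonally, $\bartensH(x)^{-1}$ also acts diagonally, so $\bartensH(x)^{-1}[\nabla\barf(x)] = (\bartensH_1(x)^{-1}[\partial_1\barf(x)],\dots,\bartensH_K(x)^{-1}[\partial_K\barf(x)])$; here one should note that each $\bartensH_k(x)=\bar H_{kk}(x)$ is positive definite by the standing assumption of this subsection, hence invertible. Finally, as stated in~\cref{sec: optim on product manifolds}, the orthogonal projection onto $\tangent_x\!\tensM$ with respect to the product metric $g$ splits as $\Pi_{g,x}(\bareta)=(\Pi_{g^1,x_1}(\bareta_1),\dots,\Pi_{g^K,x_K}(\bareta_K))$. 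Composing these three observations yields~\cref{eq: Riemannian gradient on product space}.

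The only point requiring a little care is the compatibility of the metric $\barg$ on $\tensE$ with the product metric $g$ on $\tensM$: one must check that the restriction of $\barg$ to $\tangent_x\!\tensM$ coincides with $g_x$, so that Proposition~\ref{prop: Riemannian gradient as Riemannian submanifold} genuinely applies. This is immediate from the block-diagonal form, since $\barg_x(\xi,\eta)=\sum_k\langle\xi_k,\bartensH_k(x)[\eta_k]\rangle = \sum_k g^k_{x_k}(\xi_k,\eta_k) = g_x(\xi,\eta)$ already for all $\xi,\eta\in\tangent_x\!\tensM$. I do not anticipate any serious obstacle: the statement is essentially a bookkeeping consequence of Proposition~\ref{prop: Riemannian gradient as Riemannian submanifold} together with the product structure, and the proof is a short unfolding of definitions rather than a computation.
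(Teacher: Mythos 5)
Your proposal is correct and follows exactly the route the paper intends: the paper presents this proposition as an immediate blockwise consequence of \cref{prop: Riemannian gradient as Riemannian submanifold}, using the block-diagonal structure of $\bartensH(x)$ in \cref{eq: exact}, the product form of $\Pi_{g,x}$, and the decomposition of $\nabla\barf(x)$ into partial derivatives. Your extra check that $\barg$ restricted to $\tangent_x\!\tensM$ recovers $g_x$ is a sensible (if routine) addition.
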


It is worth noting that developing an appropriate metric by exploiting the diagonal blocks is closely related to the \emph{block-Jacobi} preconditioning~\cite{demmel2023nearly} in numerical linear algebra. Specifically, given a symmetric positive definite matrix $\mata\in\mathbb{R}^{n\times n}$, the goal of block-Jacobi preconditioning is to construct an invertible block-diagonal matrix 
  \[\matD=\begin{bmatrix}
      \matD_{11} &  & & \\
      & \matD_{22} &  & \\
      &  & \ddots & \\
      &  &  & \matD_{KK}\\
    \end{bmatrix}\in\mathbb{R}^{n\times n}\ \text{with}\ \matD_{kk}\in\mathbb{R}^{n_k\times n_k},\ k=1,2,\dots,K
  \]
  such that $\kappa_2(\matD\mata\matD^\T):=\lambda_{\max}(\matD\mata\matD^\T)/\lambda_{\min}(\matD\mata\matD^\T)$ is reduced, where $n_1+n_2+\cdots+n_K=n$. Alternatively, consider the minimization problem of a quadratic function $\min_{\vecx\in\mathbb{R}^n}f(\vecx):=\frac12\vecx^\T\mata\vecx$. We can construct a preconditioned metric on the product manifold $\mathbb{R}^n=\mathbb{R}^{n_1}\times\mathbb{R}^{n_2}\times\cdots\times\mathbb{R}^{n_K}$ by
  \[g_\vecx(\xi,\eta):=\sum_{k=1}^K\langle\xi_k,(\matD_{kk}^\T\matD_{kk}^{})^{-1}\eta_k\rangle=\langle\xi,(\matD^\T\matD)^{-1}\eta\rangle.\]
  Given $\vecx\in\mathbb{R}^n$, it follows from~\cref{prop: Riemannian gradient as Riemannian submanifold of product space} and the definition of Riemannian Hessian that $\grad_g f(\vecx)=(\matD^\T\matD)\mata\vecx$ and $\Hess_g\!f(\vecx)=(\matD^\T\matD)\mata$. Therefore, the Rayleigh quotient is given by
  \begin{equation*}
    q_\vecx(\eta)=\frac{g_\vecx(\eta,\Hess_g\!f(\vecx)[\eta])}{g_\vecx(\eta,\eta)}=\frac{\langle\eta,\mata\eta\rangle}{\langle\eta,(\matD^\T\matD)^{-1}\eta\rangle}\xlongequal{\tilde{\eta}:=\matD^{-\T}\eta}\frac{\langle\tilde{\eta},(\matD\mata\matD^\T)\tilde{\eta}\rangle}{\langle\tilde{\eta},\tilde{\eta}\rangle}
  \end{equation*}
  for $\eta\in\tangent_\vecx\!\mathbb{R}^n\simeq\mathbb{R}^n$. It follows from~\cref{eq: condition number of Hessian} that 
  \[\kappa_g(\Hess_g\!f(\vecx))=\frac{\sup_{\eta\in\tangent_{\vecx}\!\tensM}q_{\vecx}(\eta)}{\inf_{\eta\in\tangent_{\vecx}\!\tensM}q_{\vecx}(\eta)}=\frac{\lambda_{\max}(\matD\mata\matD^\T)}{\lambda_{\min}(\matD\mata\matD^\T)}=\kappa_2(\matD\mata\matD^\T).\]
  As a result, the block-Jacobi preconditioning that aims to reduce $\kappa_2(\matD\mata\matD^\T)$ is equivalent to selecting a specific preconditioned metric on $\mathbb{R}^n$ to reduce the condition number of the Riemannian Hessian of $f$, i.e., $\kappa_g(\Hess_g\!f(\vecx))$. Additionally, the preconditioning in matrix and tensor completion can be interpreted as the proposed exact block diagonal preconditioning; see~\cref{sec: LRMC and LRTC} for details.

\subsection{Left and right preconditioning}\label{subsec: left and right}
Generally, in block diagonal preconditioning, $\bar{H}_{11},\bar{H}_{22},\dots,\bar{H}_{KK}$ are not positive definite on the ambient space $\tensE$. Therefore, we seek an appropriate approximation of these terms. 

Specifically, we assume that $\tensE$ is a product space of matrices and ${H}_{kk}(x)[\eta_k]$ contains a linear term $\bar{\matM}_{k,1}(x)\eta_k\bar{\matM}_{k,2}(x)$, where $\bar{\matM}_{k,1}(x)$ and $\bar{\matM}_{k,2}(x)$ are square matrices for fixed $x\in\tensE$. In view of the Riemannian Hessian~\cref{eq: block Hessian}, we approximate the diagonal block ${H}_{kk}(x)$ via $\bar{\matM}_{k,1}(x)\eta_k\bar{\matM}_{k,2}(x)$ by constructing an operator $\bartensH_k(x):\tangent_{x_k}\!\tensE_k\to\tangent_{x_k}\!\tensE_k$ such that 
\[\langle\xi_k,\bartensH_k(x)[\eta_k]\rangle=\langle\xi_k,\matM_{k,1}(x)\eta_k\matM_{k,2}(x)\rangle\approx\langle\xi_k,{H}_{kk}(x)[\eta_k]\rangle\quad\text{for }\xi_k,\eta_k\in\tangent_{x_k}\!\tensM_k, \]
where $\matM_{k,j}(x)=(\sym(\bar{\matM}_{k,j}(x))^2+\delta\matI)^{1/2}$ for $j=1,2$, $\sym(\mata):=(\mata+\mata^\T)/2$ and $\delta>0$ to ensure the positive definiteness.
Subsequently, a \emph{left and right} preconditioned metric on $\tangent_x\!\tensM$ is given by
\begin{align}
    g_x(\xi,\eta)&=g_{x_1}^1(\xi_1,\eta_1)+\cdots+g_{x_K}^K(\xi_K,\eta_K)\nonumber\\
    &=\langle\xi_1,\matM_{1,1}(x)\eta_1\matM_{1,2}(x)\rangle+\cdots+\langle\xi_K,\matM_{K,1}(x)\eta_K\matM_{K,2}(x)\rangle.
    \label{eq: preconditioned metric on product space}
\end{align}
Note that $\matM_{k,j}(x)$ are smooth and positive definite for all $x\in\tensE$ and thus~\cref{eq: preconditioned metric on product space} is a well-defined Riemannian metric. The corresponding Riemannian gradient can be also computed by following~\cref{prop: Riemannian gradient as Riemannian submanifold of product space} since the operator $\bartensH$ is defined on each block.

We adopt the proposed left and right preconditioning to accelerate the Riemannian methods for canonical correlation analysis (CCA); see~\cref{sec: CCA} for details. In practice, we can consider only the left or right preconditioning to save the computational cost. Section~\ref{sec: SVD} presents how we develop right preconditioning for truncated singular value decomposition (SVD). It is worth noting that if the operators $\matM_{k,j}$ in~\cref{eq: preconditioned metric on product space} are not chosen appropriately, Riemannian methods under the metric~\cref{eq: preconditioned metric on product space} can even perform worse than those under the Euclidean metric; see~\cref{subsec: numerical CCA}. Nevertheless, the proposed metrics tailored for CCA and SVD improve the condition number of $\Hess\!f(x)$, thereby accelerating the Riemannian methods indeed; see~\cref{prop: better condition number}.

\subsection{Gauss--Newton type preconditioning}
\label{subsec: Gauss--Newton}
If the cost function in~\cref{eq: optimization problem} satisfies that $f(x):=\frac12\|F(x)\|_\mathrm{F}^2$ for some smooth function $F:\tensM\to\mathbb{R}^n$ with injective $\mathrm{D}F(x)$, one can consider the operator $\bartensH(x)=(\mathrm{D}F(x))^*\circ\mathrm{D}F(x)$ to approximate $\Hess_\eucmetric\!f(x)$, and construct the preconditioned metric as follows,
\begin{equation}
  \label{eq: Gauss--Newton metric}
  g_x(\xi,\eta)=\langle\xi,\bartensH(x)[\eta]\rangle=\langle\xi,((\mathrm{D}F(x))^*\circ\mathrm{D}F(x))[\eta]\rangle\approx\langle\xi,\Hess_\eucmetric\!f(x)[\eta]\rangle,
\end{equation}
where $(\mathrm{D}F(x))^*$ is the adjoint operator of $\mathrm{D}F(x)$.
The \emph{Gauss--Newton type} preconditioning is no longer a block diagonal preconditioning since $\bartensH(x)$ contains cross terms among $\eta_1,\eta_2,\dots,\eta_K$. As a result, the Riemannian gradient directly follows from~\cref{prop: Riemannian gradient as Riemannian submanifold}.

Note that the Riemannian gradient descent method with the metric $g$ is exactly the Riemannian Gauss--Newton method~\cite[\S 8.4.1]{absil2009optimization}, where the search direction $\eta^{(t)}\in\tangent_{x^{(t)}}\!\tensM$ at $x^{(t)}\in\tensM$ is computed by the following Gauss--Newton equation
\[\langle\mathrm{D}F(x^{(t)})[\xi],\mathrm{D}F(x^{(t)})[\eta^{(t)}]\rangle+\langle\mathrm{D}F(x^{(t)})[\xi],F(x^{(t)})\rangle=0\quad\text{for }\xi\in\tangent_{x^{(t)}}\!\tensM,\]
or $((\mathrm{D}F(x^{(t)}))^*\circ\mathrm{D}F(x^{(t)}))[\eta^{(t)}]=-(\mathrm{D}F(x^{(t)}))^*[F(x^{(t)})]$. It follows from the injectivity of $\mathrm{D}F(x^{(t)})$ that
\[\eta^{(t)}=-((\mathrm{D}F(x^{(t)}))^*\circ\mathrm{D}F(x^{(t)}))^{-1}[(\mathrm{D}F(x^{(t)}))^*[F(x^{(t)})]],\]
which is also the solution of the following least-squares problem
\begin{equation}\label{eq: search direction of GN}
  \min_{\eta\in\tangent_{x^{(t)}}\!\tensM}\frac12\langle\mathrm{D}F(x^{(t)})[\eta],\mathrm{D}F(x^{(t)})[\eta]\rangle+\langle\mathrm{D}F(x^{(t)})[\eta],F(x^{(t)})\rangle.
\end{equation}

Since $\langle\mathrm{D}F(x^{(t)})[\eta^{(t)}],F(x^{(t)})\rangle=\mathrm{D}f(x^{(t)})[\eta^{(t)}]=\mathrm{D}\barf(x^{(t)})[\eta^{(t)}]=\langle\nabla\barf(x^{(t)}),\eta^{(t)}\rangle$, where $\barf:\tensE\to\mathbb{R}$ is any smooth extension of $f$,~\cref{eq: search direction of GN} is equivalent to
\begin{equation}\label{eq: search direction of GN new}
  \min_{\eta\in\tangent_{x^{(t)}}\!\tensM}\frac12\langle\bartensH(x^{(t)})[\eta],\eta\rangle+\langle\nabla\barf(x^{(t)}),\eta\rangle.
\end{equation}
It follows from~\cite{mishra2016riemannian} that the solution of~\cref{eq: search direction of GN new} is $\eta^{(t)}=-\grad_g f(x^{(t)})$. In other words, the Riemannian Gauss--Newton method can be interpreted by the Riemannian gradient descent method with the metric $g$. Therefore, we refer to the proposed framework as the Gauss--Newton type preconditioning, which can be adopted to tensor completion; see~\cref{sec: LRMC and LRTC} for details.

\begin{remark}
  Let the operator $\bartensH(x)$ be the Riemannian Hessian $\Hess_\eucmetric\!f(x)$ of $f$ at $x\in\tensM$ under the Euclidean metric. If $\Hess_\eucmetric\!f(x)$ is positive definite, the metric $g_x(\xi,\eta)=\langle\xi,\Hess_\eucmetric\!f(x)[\eta]\rangle$ is referred to \emph{Hessian metric} in~\cite{shima1997geometry}. The negative Riemannian gradient $-\grad_g f(x)$ echoes the Riemannian Newton direction under the Euclidean metric; see~\cite[Proposition 4.1]{absil2009all} and~\cite[Proposition 2.1]{mishra2016riemannian}. Note that the proposed Gauss--Newton type preconditioning is different from the Hessian metric since~\cref{eq: Gauss--Newton metric} adopts partial information of the Riemannian Hessian.
\end{remark}

\section{Application to canonical correlation analysis}\label{sec: CCA} 
In this section, we apply the proposed framework to solve the canonical correlation analysis (CCA) problem. A new left and right preconditioned metric is proposed. Then, we prove that the the proposed metric improves the condition number of the Riemannian Hessian. Numerical experiments verify that the proposed metric accelerates the Riemannian methods. 

Consider two data matrices $\matX\in\mathbb{R}^{n\times d_x}$ and $\matY\in\mathbb{R}^{n\times d_y}$ with $n$ samples and $d_x$, $d_y$ variables respectively. The goal of CCA is to choose $m$ weights $\vecu_1,\dots,\vecu_m\in\mathbb{R}^{d_x}$ and $\vecv_1,\dots,\vecv_m\in\mathbb{R}^{d_y}$ such that the data matrices $\matx\matu$ and $\maty\matv$ have the highest correlation, where $\matu=[\vecu_1,\dots,\vecu_m]$ and $\matv=[\vecv_1,\dots,\vecv_m]$. 
CCA can be written as an optimization problem on the product manifold of two \emph{generalized Stiefel manifolds}, i.e,
\begin{equation}\label{eq: CCA}
  \min_{\matu,\matv}\ f(\matu,\matv):=-\tr(\matu^\T\Sigma_{xy}\matv\matN),\ \subjectto\ (\matu,\matv)\in\tensM=\tensM_1\times\tensM_2,
\end{equation}
where $\Sigma_{xx}:=\matx^\T\matx+\lambda_x\matI_{d_x}$, $\Sigma_{yy}:=\maty^\T\maty+\lambda_y\matI_{d_y}$, 
$\lambda_x,\lambda_y\geq 0$ are regularization parameters, $\Sigma_{xy}:=\matx^\T\maty$, $\tensM_1:=\St_{\Sigma_{xx}}(m,d_x)=\{\matu\in\mathbb{R}^{d_x\times m}:\matu^\T\Sigma_{xx}\matu=\matI_m\}$ and $\tensM_2:=\St_{\Sigma_{yy}}(m,d_y)$ refer to the generalized Stiefel manifolds, and $\matN:=\diag(\mu_1,\mu_2,\dots,\mu_m)$ satisfies $\mu_1>\mu_2>\cdots>\mu_m>0$. The cost function $f$ in~\cref{eq: CCA} is also known as the \emph{von Neumann cost function}~\cite{yon1937some}. The problem~\cref{eq: CCA} has a closed-form solution 
\begin{equation}\label{eq: cca solution}
  (\matu^*,\matv^*)=(\Sigma_{xx}^{-1/2}\bar{\matu},\Sigma_{yy}^{-1/2}\bar{\matv}),
\end{equation}
where $\bar{\matu}:=[\baru_1,\dots,\baru_m]$ and $\bar{\matv}:=[\barv_1,\dots,\barv_m]$ are the $m$ leading left and right singular vectors of the matrix $\Sigma_{xx}^{-1/2}\Sigma_{xy}\Sigma_{yy}^{-1/2}$ respectively. The $m$ largest singular values $\sigma_1\geq\sigma_2\geq\cdots\geq\sigma_m>0$ of $\Sigma_{xx}^{-1/2}\Sigma_{xy}\Sigma_{yy}^{-1/2}$ are referred to as the canonical correlations. 
We intend to propose preconditioned metrics on $\tensM$ and adapt the Riemannian methods to solve~\cref{eq: CCA}.

\subsection{Left preconditioner}
Shustin and Avron~\cite[\S 4.2]{shustin2023riemannian} proposed to endow $\tensM$ with the following metric
\begin{equation}
  \label{eq: CCA metric by Shustin}
  g_{(\matu,\matv)}(\xi,\eta):=\langle\xi_1,\Sigma_{xx}\eta_1\rangle+\langle\xi_2,\Sigma_{yy}\eta_2\rangle\quad\text{for } \xi,\eta\in\tangent_{(\matu,\matv)}\!\tensM,
\end{equation}
where the tangent space $\tangent_{(\matu,\matv)}\!\tensM$ is defined by $\tangent_{(\matu,\matv)}\!\tensM\simeq\tangent_{\matu}\!\tensM_1\times \tangent_{\matv}\!\tensM_2$ and
\begin{equation}\label{eq: tangent space of gen St}
  \tangent_{\matu}\!\tensM_1=\{\matu\matOmega_1+\matu_{\Sigma_{xx}\perp}\matK_1:\matOmega_1\in\mathbb{R}^{m\times m},\matOmega_1^\T=-\matOmega_1,\matK_1\in\mathbb{R}^{(d_x-m)\times m}\}
\end{equation}
is the tangent space of the generalized Stiefel manifold $\tensM_1$ with dimension $md_x-m(m+1)/2$, the matrix $\matu_{\Sigma_{xx}\perp}\in\mathbb{R}^{d_x\times (d_x-m)}$ satisfies that $(\matu_{\Sigma_{xx}\perp})^\T \Sigma_{xx}^{}\matu_{\Sigma_{xx}\perp}=\matI_{d_x-m}$ and $\matu^\T\Sigma_{xx}^{}\matu_{\Sigma_{xx}\perp}=0$. The $\tangent_{\matv}\!\tensM_2$ is defined in a same fashion.

In our framework, it is equivalent that the operators in~\cref{eq: preconditioned metric on product space} are defined by $\bartensH_1(\matu,\matv)[\eta_1]=\Sigma_{xx}\eta_1$ and $\bartensH_2(\matu,\matv)[\eta_2]=\Sigma_{yy}\eta_2$, which have left preconditioning effect. The orthogonal projection with respect to $g$ of a vector $\bareta\in\tangent_{(\matu,\matv)}\!\tensE\simeq\tensE$ onto $\tangent_{(\matu,\matv)}\!\tensM$ is given by $\Pi_{g,(\matu,\matv)}(\bareta)=\left(\bareta_1-\matu\sym(\matu^\T\Sigma_{xx}\bareta_1),\bareta_2-\matv\sym(\matv^\T\Sigma_{yy}\bareta_2)\right)$, where $\tensE=\mathbb{R}^{d_x\times m}\times\mathbb{R}^{d_y\times m}$ is the ambient space of $\tensM$. Therefore, it follows from~\cref{eq: Riemannian gradient on product space} that the Riemannian gradient is 
\begin{equation}
  \label{eq: CCA Riemannian gradient under Shustin}
  \begin{aligned}
    \grad_g f(\matu,\matv)=(&-\Sigma_{xx}^{-1}\Sigma_{xy}\matv\matN+\matu\sym(\matu^\T\Sigma_{xy}\matv\matN),\\
    &-\Sigma_{yy}^{-1}\Sigma_{xy}^\T\matu\matN+\matv\sym(\matv^\T\Sigma_{xy}^\T\matu\matN)).
  \end{aligned}
\end{equation}

Since the local convergence rate of Riemannian optimization methods is closely related to the condition number $\kappa_g(\Hess_g\!f(\matu^*,\matv^*))$ (see~\cref{subsec: local convergence}), we first compute the Riemannian Hessian of $f$ at $(\matu,\matv)$ along $\eta=(\eta_1,\eta_2)\in\tangent_{(\matu,\matv)}\!\tensM$ by using~\cref{eq: Riemannian Hessian}
\begin{equation}
  \label{eq: Riemannian Hessian CCA Shustin}
  \begin{aligned}
    \Hess_g\!f(\matu,\matv)[\eta]=\Pi_{g,(\matu,\matv)}\big(&\eta_1\sym(\matu^\T\Sigma_{xy}\matv\matN)+\matu\sym(\eta_1^\T\Sigma_{xy}\matv\matN)\\
    &+\matu\sym(\matu^\T\Sigma_{xy}\eta_2\matN)-\Sigma_{xx}^{-1}\Sigma_{xy}\eta_2\matN,\\
    &\eta_2\sym(\matv^\T\Sigma_{xy}^\T\matu\matN)+\matv\sym(\eta_2^\T\Sigma_{xy}^\T\matu\matN)\\
    &+\matv\sym(\matv^\T\Sigma_{xy}^\T\eta_1\matN)-\Sigma_{yy}^{-1}\Sigma_{xy}^\T\eta_1\matN\big),
  \end{aligned}
\end{equation}
Then, the condition number of $\Hess_g\!f(\matu^*,\matv^*)$ can be computed as follows. 
\begin{proposition}\label{prop: condition number of Shustin}
  Let $\sigma_1>\sigma_2>\cdots>\sigma_{m+1}\geq\cdots\geq\sigma_{\min\{d_x,d_y\}}$ be singular values of the matrix $\Sigma_{xx}^{-1/2}\Sigma_{xy}\Sigma_{yy}^{-1/2}$. It holds that
  \[\kappa_g(\Hess_g\!f(\matu^*,\matv^*))=\frac{\max\left\{\frac12(\mu_1+\mu_2)(\sigma_1+\sigma_2),\mu_1(\sigma_1+\sigma_{m+1})\right\}}{\min\{\min_{i,j\in[m],i\neq j}\frac12(\mu_i-\mu_j)(\sigma_i-\sigma_j),\mu_m(\sigma_m-\sigma_{m+1})\}},\]where $[m]:=\{1,2,\dots,m\}$.
\end{proposition}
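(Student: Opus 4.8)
The plan is to diagonalize the Riemannian Hessian \cref{eq: Riemannian Hessian CCA Shustin} at the minimizer $(\matu^*,\matv^*)$ by choosing coordinates on $\tangent_{(\matu^*,\matv^*)}\!\tensM$ adapted to the singular value structure of $\Sigma_{xx}^{-1/2}\Sigma_{xy}\Sigma_{yy}^{-1/2}$. First I would perform the change of variables $\matu=\Sigma_{xx}^{-1/2}\tilde{\matu}$, $\matv=\Sigma_{yy}^{-1/2}\tilde{\matv}$, under which $\tensM_1,\tensM_2$ become ordinary Stiefel manifolds and the metric \cref{eq: CCA metric by Shustin} becomes the Euclidean metric; the cost function becomes $-\tr(\tilde{\matu}^\T\matC\tilde{\matv}\matN)$ with $\matC:=\Sigma_{xx}^{-1/2}\Sigma_{xy}\Sigma_{yy}^{-1/2}$. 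At the minimizer, $\tilde{\matu}^*=\bar{\matu}$ and $\tilde{\matv}^*=\bar{\matv}$ are the leading left/right singular vectors of $\matC=\sum_i\sigma_i\bar{u}_i\bar{v}_i^\T$. This reduces the problem to computing the eigenvalues of the Riemannian Hessian of the von Neumann cost on $\St(m,d_x)\times\St(m,d_y)$ under the Euclidean metric at a critical point, which is a classical computation.

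Next I would parametrize a tangent vector at the critical point. Using \cref{eq: tangent space of gen St} in the transformed coordinates, a tangent vector to $\St(m,d_x)$ at $\bar{\matu}$ has the form $\bar{\matu}\matOmega_1+\bar{\matu}_\perp\matK_1$ with $\matOmega_1$ skew-symmetric $m\times m$ and $\matK_1\in\mathbb{R}^{(d_x-m)\times m}$, and similarly for the second factor. I would substitute $\eta$ of this form into the transformed Hessian, use $\matC^\T\bar{\matu}=\diag(\sigma_1,\dots,\sigma_m)\bar{\matv}$ and $\matC\bar{\matv}=\diag(\sigma_1,\dots,\sigma_m)\bar{\matu}$ (and $\matC\bar{\matv}_\perp$, $\matC^\T\bar{\matu}_\perp$ involve the trailing singular values $\sigma_{m+1},\dots$), and project back onto the tangent space. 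The Rayleigh quotient \cref{eq: Rayleigh quotient} then decouples into scalar blocks: a block indexed by pairs $\{i,j\}\subset[m]$, $i\ne j$, mixing the $(i,j)$ and $(j,i)$ entries of $\matOmega_1,\matOmega_2$ and the diagonal-type directions, yielding eigenvalues $\tfrac12(\mu_i\pm\mu_j)(\sigma_i\pm\sigma_j)$ or similar combinations; and a block indexed by the ``orthogonal complement'' directions $\matK_1,\matK_2$, pairing row $i$ of $\matK_1$ with row $i$ of $\matK_2$, giving eigenvalues of the $2\times2$ matrix $\begin{bmatrix}\mu_i\sigma_{m+\ell} & \cdot\\ \cdot & \cdot\end{bmatrix}$ whose spectrum is $\mu_i(\sigma_i\pm\sigma_{m+\ell})$. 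Collecting the extreme eigenvalues over all blocks gives $\lambda_{\max}=\max\{\tfrac12(\mu_1+\mu_2)(\sigma_1+\sigma_2),\mu_1(\sigma_1+\sigma_{m+1})\}$ and $\lambda_{\min}=\min\{\min_{i\ne j}\tfrac12(\mu_i-\mu_j)(\sigma_i-\sigma_j),\mu_m(\sigma_m-\sigma_{m+1})\}$, and the formula in \cref{prop: condition number of Shustin} follows from \cref{eq: condition number of Hessian}.

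The main obstacle I anticipate is the bookkeeping in the block decomposition: one must correctly identify which combinations of the skew-symmetric parts $\matOmega_1,\matOmega_2$ and the complement parts $\matK_1,\matK_2$ are coupled by the Hessian, verify that the projection $\Pi_{g,(\matu^*,\matv^*)}$ (which involves $\sym(\cdot)$ terms) kills exactly the normal components and acts as identity plus the right correction on the tangent part, and confirm that the ``diagonal'' directions (the symmetric part of the $m\times m$ blocks of $\eta_1,\eta_2$, which are \emph{not} tangent) do not contribute — equivalently, that the tangent-space constraint $\matu^\T\Sigma_{xx}\eta_1+\eta_1^\T\Sigma_{xx}\matu=0$ reduces the relevant $m\times m$ directions to skew-symmetric ones. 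A secondary subtlety is the positivity of $\lambda_{\min}$: since $\mu_1>\cdots>\mu_m>0$ is strict and $\sigma_1>\cdots>\sigma_{m+1}$ is assumed strict (and $(\matu^*,\matv^*)$ is a genuine local minimizer), all the listed eigenvalues are positive, so the condition number is well-defined; I would note this explicitly. The rest is a routine, if lengthy, eigenvalue computation on the decoupled scalar and $2\times2$ blocks.
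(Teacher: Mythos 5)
Your proposal is correct and follows essentially the same route as the paper's proof in the appendix: the change of variables by $\Sigma_{xx}^{1/2}$, $\Sigma_{yy}^{1/2}$ (which the paper applies to the tangent vectors inside the Rayleigh quotient rather than to the manifold itself), the parametrization of tangent vectors as $\bar{\matu}\matOmega+\bar{\matu}_\perp\matK$ with skew $\matOmega$, and the reduction of the Rayleigh quotient to decoupled scalar blocks with values $\tfrac12(\mu_i\pm\mu_j)(\sigma_i\pm\sigma_j)$ and $\mu_i(\sigma_i\pm\sigma_j)$ for $j>m$. The paper phrases the last step as tight upper and lower bounds on the quotient obtained by regrouping the squared entries of $\matOmega_\ell,\matK_\ell$ rather than as an explicit eigenvalue diagonalization, but the computation and the resulting extreme values are the same.
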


A proof is given in~\cite[Theorem 5]{shustin2021faster}. Nevertheless, we provide a slightly different proof for~\cref{prop: condition number of Shustin} in~\cref{app: proof} to facilitate the proofs of~\cref{prop: condition number of CCA in new metric,prop: condition number of SVD}. 
Specifically, the proof sketch of~\cref{prop: condition number of Shustin,prop: condition number of CCA in new metric,prop: condition number of SVD}  follows from the same procedure: 1) compute the Riemannian Hessian under the given metric $\Hess_{g}\!f(\matu^*,\matv^*)[\eta]$; 2) compute the Rayleigh quotient $q(\eta)$; 3) compute the maximum and minimum of Rayleigh quotient by taking the parametrization of tangent spaces into $q(\eta)$. Note that~\cref{prop: condition number of Shustin} boils down to
$\kappa_g(\Hess_g\!f(\matu^*,\matv^*))=(\sigma_1+\sigma_2)/(\sigma_1-\sigma_2)$
for $m=1$, which coincides with the result in~\cite[Lemma 4.1]{shustin2023riemannian}.

\subsection{New left and right preconditioner} 
Observing from the second-order information in~\cref{eq: Riemannian Hessian CCA Shustin}, we aim to approximate the diagonal blocks of~\cref{eq: Riemannian Hessian CCA Shustin} and propose a new metric where the operators in~\cref{eq: preconditioned metric on product space} have both left and right preconditioning effect. To this end, we adopt the left and right preconditioning in~\cref{subsec: left and right} and propose a new Riemannian metric
\begin{equation}
  \label{eq: CCA metric new}
  g_{\mathrm{new},(\matU,\matv)}(\xi,\eta):=\langle\xi_1,\Sigma_{xx}\eta_1\matM_{1,2}\rangle+\langle\xi_2,\Sigma_{yy}\eta_2\matM_{2,2}\rangle,
\end{equation}
where $\matM_{1,2}:=(\sym(\matu^\T\Sigma_{xy}\matv\matN)^2+\delta\matI_m)^{1/2}$, $\matM_{2,2}:=(\sym(\matv^\T\Sigma_{xy}^\T\matu\matN)^2+\delta\matI_m)^{1/2}$, and $\delta>0$. The projection operator $\Pi_{\mathrm{new},(\matu,\matv)}$ is given by the following proposition.
\begin{proposition}
  \label{prop: orthogonal projection operator of new metric}
  Given the new metric~\cref{eq: CCA metric new}, the orthogonal projection operator on $\tangent_{(\matU,\matv)}\!\tensM$ is given by
  \begin{equation}\label{eq: CCA projection}
    \Pi_{\mathrm{new},(\matu,\matv)}(\bareta)=(\Pi_{\mathrm{new},\matu}(\bareta_1),\Pi_{\mathrm{new},\matv}(\bareta_2))=(\bareta_1-\matu\matS_1^{}\matM_{1,2}^{-1},\bareta_2-\matv\matS_2^{}\matM_{2,2}^{-1})
  \end{equation}
  for $\bareta\in\tangent_{(\matu,\matv)}\!\tensE\simeq\tensE$,
  where $\matS_1$, $\matS_2$ are the unique solutions of the Lyapunov equations $\matM_{1,2}^{-1}\matS_1^{}+\matS_1^{}\matM_{1,2}^{-1}=2\sym(\matu^\T\Sigma_{xx}\bareta_1)$ and $\matM_{2,2}^{-1}\matS_2^{}+\matS_2^{}\matM_{2,2}^{-1}=2\sym(\matv^\T\Sigma_{yy}\bareta_2)$.
\end{proposition}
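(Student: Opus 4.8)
The plan is to reduce the statement to a single generalized Stiefel factor and then identify the normal space associated with the new metric. First I would note that the metric \cref{eq: CCA metric new} is a sum $g_{\mathrm{new},(\matu,\matv)}(\xi,\eta)=g^1_\matu(\xi_1,\eta_1)+g^2_\matv(\xi_2,\eta_2)$ with $g^1_\matu(\xi_1,\eta_1):=\langle\xi_1,\Sigma_{xx}\eta_1\matM_{1,2}\rangle$ and $g^2_\matv(\xi_2,\eta_2):=\langle\xi_2,\Sigma_{yy}\eta_2\matM_{2,2}\rangle$, each depending on only one block. Hence $\tangent_{(\matu,\matv)}\!\tensM$ and its $g_{\mathrm{new}}$-orthogonal complement in $\tensE$ split as products, so $\Pi_{\mathrm{new},(\matu,\matv)}=(\Pi_{\mathrm{new},\matu},\Pi_{\mathrm{new},\matv})$, where $\Pi_{\mathrm{new},\matu}$ is the $g^1$-orthogonal projection onto $\tangent_\matu\!\tensM_1$. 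It then suffices to establish the formula for $\Pi_{\mathrm{new},\matu}$; the one for $\Pi_{\mathrm{new},\matv}$ follows verbatim with $(\Sigma_{xx},\matM_{1,2})$ replaced by $(\Sigma_{yy},\matM_{2,2})$.

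Next I would pin down the $g^1$-normal space of $\tensM_1$ at $\matu$. From \cref{eq: tangent space of gen St}, $\tangent_\matu\!\tensM_1=\{\eta_1:\sym(\matu^\T\Sigma_{xx}\eta_1)=0\}$, i.e.\ $\matu^\T\Sigma_{xx}\eta_1$ is skew-symmetric whenever $\eta_1$ is tangent. I claim the $g^1$-normal space is $\mathcal{N}_\matu:=\{\matu\matS\matM_{1,2}^{-1}:\matS=\matS^\T\in\mathbb{R}^{m\times m}\}$. Orthogonality is a short trace computation: for tangent $\eta_1$,
\[g^1_\matu(\matu\matS\matM_{1,2}^{-1},\eta_1)=\langle\matu\matS\matM_{1,2}^{-1},\Sigma_{xx}\eta_1\matM_{1,2}\rangle=\tr(\matS\,\matu^\T\Sigma_{xx}\eta_1)=0,\]
using $\matS^\T=\matS$, $\matM_{1,2}^\T=\matM_{1,2}$, cyclicity of the trace, and the fact that the trace of the product of a symmetric and a skew-symmetric matrix vanishes. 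A dimension count ($\dim\mathcal{N}_\matu=m(m+1)/2$ complements $\dim\tangent_\matu\!\tensM_1=md_x-m(m+1)/2$ to $md_x=\dim\tensE_1$), together with $\tangent_\matu\!\tensM_1\cap\mathcal{N}_\matu=\{0\}$, yields the decomposition $\tensE_1=\tangent_\matu\!\tensM_1\oplus\mathcal{N}_\matu$. The trivial-intersection part is exactly where the positive definiteness of $\matM_{1,2}=(\sym(\matu^\T\Sigma_{xy}\matv\matN)^2+\delta\matI_m)^{1/2}$ enters: if $\matu\matS\matM_{1,2}^{-1}$ is tangent, then $\matM_{1,2}^{-1}\matS+\matS\matM_{1,2}^{-1}=2\sym(\matS\matM_{1,2}^{-1})=0$, and since $\matM_{1,2}^{-1}\succ0$ the Lyapunov operator $\matS\mapsto\matM_{1,2}^{-1}\matS+\matS\matM_{1,2}^{-1}$ is invertible (its eigenvalues are pairwise sums of eigenvalues of $\matM_{1,2}^{-1}$), forcing $\matS=0$.

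With the splitting in hand, I would decompose an arbitrary $\bareta_1\in\tensE_1$ as $\bareta_1=\eta_1+\matu\matS_1\matM_{1,2}^{-1}$ with $\eta_1\in\tangent_\matu\!\tensM_1$ and $\matS_1=\matS_1^\T$. Left-multiplying by $\matu^\T\Sigma_{xx}$, using $\matu^\T\Sigma_{xx}\matu=\matI_m$ (since $\matu\in\St_{\Sigma_{xx}}(m,d_x)$), and taking symmetric parts gives $\sym(\matu^\T\Sigma_{xx}\bareta_1)=\sym(\matS_1\matM_{1,2}^{-1})=\tfrac12(\matM_{1,2}^{-1}\matS_1+\matS_1\matM_{1,2}^{-1})$, i.e.\ $\matS_1$ is the unique solution (uniqueness by the invertibility just established) of the Lyapunov equation $\matM_{1,2}^{-1}\matS_1+\matS_1\matM_{1,2}^{-1}=2\sym(\matu^\T\Sigma_{xx}\bareta_1)$. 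Consequently $\Pi_{\mathrm{new},\matu}(\bareta_1)=\bareta_1-\matu\matS_1\matM_{1,2}^{-1}$, and the analogous argument on $\tensM_2$ gives $\Pi_{\mathrm{new},\matv}(\bareta_2)=\bareta_2-\matv\matS_2\matM_{2,2}^{-1}$; assembling the two blocks via the product structure of the first paragraph yields \cref{eq: CCA projection}.

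The hard part will be locating the right ansatz for the normal space: the right-multiplication by $\matM_{1,2}^{-1}$ is precisely what makes the $g^1$-orthogonality collapse to the familiar symmetric-versus-skew trace identity, and recognizing that the resulting constraint on $\matS_1$ is a Lyapunov equation---uniquely solvable thanks to $\matM_{1,2}\succ0$---is what pins down the projection. The remaining computations are routine bookkeeping.
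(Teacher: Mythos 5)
Your proposal is correct and follows essentially the same route as the paper's proof in the appendix: identify the $g^1$-normal space as $\{\matu\matS\matM_{1,2}^{-1}:\matS=\matS^\T\}$ via the symmetric-versus-skew trace identity plus a dimension count, decompose $\bareta_1$ accordingly, and left-multiply by $\matu^\T\Sigma_{xx}$ to extract the Lyapunov equation for $\matS_1$. The only cosmetic differences are that you use the implicit characterization $\sym(\matu^\T\Sigma_{xx}\eta_1)=0$ of the tangent space and justify unique solvability of the Lyapunov equation by the eigenvalue-sum argument, whereas the paper works with the explicit parametrization~\cref{eq: tangent space of gen St} and cites a standard reference for the Lyapunov solvability.
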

\begin{proof}
  See~\cref{app: proof of orth proj}.
\end{proof}

It follows from~\cref{prop: Riemannian gradient as Riemannian submanifold of product space,prop: orthogonal projection operator of new metric} that the Riemannian gradient of $f$ at $(\matu,\matv)\in\tensM$ is 
  \begin{equation}
    \label{eq: Riemannian gradient under CCA new metric}
    \grad_{\mathrm{new}} f(\matu,\matv) = -((\Sigma_{xx}^{-1}\Sigma_{xy}^{}\matv\matN+\matu\matS_1^{})\matM_{1,2}^{-1},(\Sigma_{yy}^{-1}\Sigma_{xy}^\T\matu\matN+\matv\matS_2^{})\matM_{2,2}^{-1}).
  \end{equation}
Since $\matM_{1,2},\matM_{2,2}\in\mathbb{R}^{m\times m}$ and $m\ll\min\{d_x,d_y\}$, the computational cost of the Riemannian gradient under the new metric~\cref{eq: CCA metric new} is comparable to one under~\cref{eq: CCA metric by Shustin}. Subsequently, the Riemannian Hessian of $f$ at $(\matu^*,\matv^*)$ along $\eta$ is given by
\begin{equation*}
  \begin{aligned}
    \Hess_{\mathrm{new}}\!f(\matu^*,\matv^*)[\eta]&=\Pi_{\mathrm{new},(\matu^*,\matv^*)}(\mathrm{D} \bar{G}_{\mathrm{new}}(\matu^*,\matv^*)[\eta]) 
    \\
    =\Pi_{\mathrm{new},(\matu^*,\matv^*)}(&-\Sigma_{xx}^{-1}\Sigma_{xy}^{}\eta_2\matN\matM_{1,2}^{-1}+\Sigma_{xx}^{-1}\Sigma_{xy}\matv^*\matN\matM_{1,2}^{-1}\dot{\matM}_{1,2}^{}\matM_{1,2}^{-1}\\
    &-\eta_1\matS_1^{}\matM_{1,2}^{-1}-\matu^*\dot{\matS}_1^{}\matM_{1,2}^{-1}+\matu^*\matS_1^{}\matM_{1,2}^{-1}\dot{\matM}_{1,2}^{}\matM_{1,2}^{-1},\\
    &-\Sigma_{yy}^{-1}\Sigma_{xy}^\T\eta_1\matN\matM_{2,2}^{-1}+\Sigma_{yy}^{-1}\Sigma_{xy}^\T\matu^*\matN\matM_{2,2}^{-1}\dot{\matM}_{2,2}^{}\matM_{2,2}^{-1}\\
    &-\eta_2\matS_2^{}\matM_{2,2}^{-1}-\matv^*\dot{\matS}_2\matM_{2,2}^{-1}+\matv^*\matS_2^{}\matM_{2,2}^{-1}\dot{\matM}_{2,2}\matM_{2,2}^{-1})
  \end{aligned}
\end{equation*}
where $\bar{G}_{\mathrm{new}}:\tensE\to\mathbb{R}$ is a smooth extension of $\grad_{\mathrm{new}}f$, $\dot{\matM}_{1,2}:=\mathrm{D}\matM_{1,2}(\matu^*,\matv^*)[\eta]$, $\dot{\matM}_{2,2}:=\mathrm{D}\matM_{2,2}(\matu^*,\matv^*)[\eta]$, and the symmetric matrices $\dot{\matS}_1$ and $\dot{\matS}_2$ satisfy the Lyapunov equations
\begin{eqnarray*}
  \sym(\dot{\matM}_{1,2}\matS_1+{\matM}_{1,2}\dot{\matS}_1+\dot{\matM}_{1,2}\Sigma\matN+{\matM}_{1,2}(\eta_1^\T\Sigma_{xy}\matv^*+(\matu^*)^\T\Sigma_{xy}\eta_2))&=0,\\
  \sym(\dot{\matM}_{2,2}\matS_2+{\matM}_{2,2}\dot{\matS}_2+\dot{\matM}_{2,2}\Sigma\matN+{\matM}_{2,2}(\eta_2^\T\Sigma_{xy}^\T\matu^*+(\matv^*)^\T\Sigma_{xy}^\T\eta_1))&=0.
\end{eqnarray*}

Finally, we illustrate the effect of the metric~\cref{eq: CCA metric new} by computing the condition number of $\Hess_g\!f(\matu^*,\matv^*)$ in the following proposition, which can be proved in a similar fashion as in~\cref{prop: condition number of Shustin}.
\begin{proposition}\label{prop: condition number of CCA in new metric}
  Let $\sigma_1>\sigma_2>\cdots>\sigma_{m+1}\geq\cdots\geq\sigma_{\min\{d_x,d_y\}}$ be the singular values of the matrix $\Sigma_{xx}^{-1/2}\Sigma_{xy}\Sigma_{yy}^{-1/2}$. Then, the condition number at the local minimizer $(\matu^*,\matv^*)$ is computed by
  \begin{equation}\label{eq: new condition number CCA}
    \kappa_\mathrm{new}(\Hess_\mathrm{new}\!f(\matu^*,\matv^*))=\frac{\max\{\underset{i,j\in[m],i\neq j}{\max}\frac{(\mu_i+\mu_j)(\sigma_i+\sigma_j)}{\sqrt{\mu_i^2\sigma_i^2+\delta}+\sqrt{\mu_j^2\sigma_j^2+\delta}},\underset{i\in[m]}{\max}\frac{\mu_i(\sigma_i+\sigma_{m+1})}{\sqrt{\mu_i^2\sigma_i^2+\delta}}\}}{\min\{\underset{i,j\in[m],i\neq j}{\min}\frac{(\mu_i-\mu_j)(\sigma_i-\sigma_j)}{\sqrt{\mu_i^2\sigma_i^2+\delta}+\sqrt{\mu_j^2\sigma_j^2+\delta}},\underset{i\in[m]}{\min}\frac{\mu_i(\sigma_i-\sigma_{m+1})}{\sqrt{\mu_i^2\sigma_i^2+\delta}}\}}.
  \end{equation}
\end{proposition}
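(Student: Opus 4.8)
The plan is to follow the three-step recipe used for \cref{prop: condition number of Shustin} in \cref{app: proof}: evaluate $\Hess_{\mathrm{new}}\!f(\matu^*,\matv^*)$ at the minimizer, form the Rayleigh quotient~\cref{eq: Rayleigh quotient}, and substitute the tangent-space parametrization~\cref{eq: tangent space of gen St} to read off the extremes via~\cref{eq: condition number of Hessian}. The first task is to show that everything collapses to diagonal form at $(\matu^*,\matv^*)$. Using the closed form~\cref{eq: cca solution}, one checks that $\Sigma:=(\matu^*)^\T\Sigma_{xy}\matv^*=\diag(\sigma_1,\dots,\sigma_m)$, so that $\sym((\matu^*)^\T\Sigma_{xy}\matv^*\matN)=\Sigma\matN$ and $\matM_{1,2}=\matM_{2,2}=(\Sigma^2\matN^2+\delta\matI_m)^{1/2}=:\matM$ is the diagonal matrix with entries $\sqrt{\mu_i^2\sigma_i^2+\delta}$; moreover, combining $\grad_{\mathrm{new}}\!f(\matu^*,\matv^*)=0$ from~\cref{eq: Riemannian gradient under CCA new metric} with the Lyapunov equations of \cref{prop: orthogonal projection operator of new metric} gives $\matS_1=\matS_2=-\Sigma\matN$. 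Substituting these identities into the displayed expression for $\Hess_{\mathrm{new}}\!f(\matu^*,\matv^*)[\eta]$ makes it manageable.

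For the Rayleigh quotient, I would use that $\Pi_{\mathrm{new},(\matu^*,\matv^*)}$ is self-adjoint for $g_{\mathrm{new}}$ and that $\eta$ is tangent, so the outer projection drops out upon pairing with $\eta$, i.e. $q(\eta)\,g_{\mathrm{new}}(\eta,\eta)=g_{\mathrm{new},(\matu^*,\matv^*)}(\eta,\mathrm{D}\bar G_{\mathrm{new}}(\matu^*,\matv^*)[\eta])$. Then the terms carrying $\dot{\matM}_{1,2}$, $\dot{\matM}_{2,2}$, $\dot{\matS}_1$, $\dot{\matS}_2$ are argued to contribute nothing: those of the form $\matu^*\cdot(\text{symmetric matrix})$ (resp.\ $\matv^*\cdot(\text{symmetric matrix})$) annihilate against $\eta$ because tangency forces $(\matu^*)^\T\Sigma_{xx}\eta_1=\matOmega_1$ and $(\matv^*)^\T\Sigma_{yy}\eta_2=\matOmega_2$ to be skew-symmetric, while the remaining $\dot{\matM}$-contributions cancel by the Lyapunov equations defining $\dot{\matS}_1,\dot{\matS}_2$. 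What survives is an explicit quadratic form in $(\eta_1,\eta_2)$ built only from $\Sigma$, $\matN$, and $\matM$.

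Finally, writing $\eta_1=\matu^*\matOmega_1+\matu^*_{\Sigma_{xx}\perp}\matK_1$ and $\eta_2=\matv^*\matOmega_2+\matv^*_{\Sigma_{yy}\perp}\matK_2$ and using $(\matu^*)^\T\Sigma_{xx}\matu^*=\matI_m$, $(\matu^*_{\Sigma_{xx}\perp})^\T\Sigma_{xx}\matu^*_{\Sigma_{xx}\perp}=\matI_{d_x-m}$ together with the $y$-analogues, both numerator and denominator of $q(\eta)$ decouple. The skew coordinates split over unordered pairs $\{i,j\}\subset[m]$ into $2\times2$ blocks whose eigenvalues before weighting are $(\mu_i+\mu_j)(\sigma_i+\sigma_j)$ and $(\mu_i-\mu_j)(\sigma_i-\sigma_j)$, each divided by the $g_{\mathrm{new}}$-weight $\sqrt{\mu_i^2\sigma_i^2+\delta}+\sqrt{\mu_j^2\sigma_j^2+\delta}$ (the sum arising from rows $i$ and $j$ of the diagonal right factor $\matM$ under the skew-symmetry constraint), while the entries of $\matK_1,\matK_2$ yield further decoupled blocks with eigenvalues $\mu_i(\sigma_i\pm\sigma_\ell)$, $\ell>m$, divided by $\sqrt{\mu_i^2\sigma_i^2+\delta}$, whose extremes over $\ell$ are attained at $\ell=m+1$. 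Taking the maximum and minimum over all these blocks in~\cref{eq: condition number of Hessian} produces exactly~\cref{eq: new condition number CCA}.

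The main obstacle is the second step: rigorously verifying that the implicitly defined quantities $\dot{\matM}_{1,2},\dot{\matM}_{2,2},\dot{\matS}_1,\dot{\matS}_2$ — which involve differentiating a matrix square root and solving Lyapunov equations — make no net contribution to the Rayleigh quotient on $\tangent_{(\matu^*,\matv^*)}\!\tensM$. A secondary point requiring care is confirming that the vectors realizing the claimed extrema (supported on a single pair $\{i,j\}$, or on one column of $\matK$ paired with the $(m+1)$-st ambient direction) actually lie in the tangent space, so that~\cref{eq: new condition number CCA} holds with equality rather than as a bound; this parallels the corresponding verification in the proof of~\cref{prop: condition number of Shustin}.
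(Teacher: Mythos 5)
Your proposal follows essentially the same route as the paper's proof: it solves the Lyapunov equations to get $\matS_1=\matS_2=-\Sigma\matN$, uses criticality to reduce $\matM_{1,2}=\matM_{2,2}$ to the diagonal matrix $(\Sigma^2\matN^2+\delta\matI_m)^{1/2}$, observes that the Rayleigh quotient differs from the one in the proof of \cref{prop: condition number of Shustin} only in its denominator, and then decouples over the tangent-space parametrization \cref{eq: tangent space of gen St} exactly as in \cref{app: proof}. The one small correction to the step you flag as the main obstacle: the two $\dot{\matM}_{1,2}$-terms cancel \emph{directly} because criticality gives $\Sigma_{xx}^{-1}\Sigma_{xy}\matv^*\matN=-\matu^*\matS_1$ (not via the Lyapunov equations for $\dot{\matS}_1,\dot{\matS}_2$), while the $\matu^*\dot{\matS}_1\matM_{1,2}^{-1}$-terms vanish upon pairing with tangent vectors since $\dot{\matS}_1$ is symmetric, exactly as you describe.
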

\begin{proof}
  By following the same procedure as~\cref{prop: condition number of Shustin}, we compute the Rayleigh quotient~\cref{eq: Rayleigh quotient} under the proposed metric~\cref{eq: CCA metric new}, and evaluate its upper and lower bounds. To this end, we firstly compute and simplify the Riemannian Hessian $\Hess_{\mathrm{new}}\!f(\matu^*,\matv^*)[\eta]$ as follows.

  By solving the Lyapunov equations in~\cref{prop: orthogonal projection operator of new metric}, we obtain that $\matS_1=\matS_2=-\Sigma\matN$, where $\Sigma=\diag(\sigma_1,\sigma_2,\dots,\sigma_m)$. Since $(\matu^*,\matv^*)$ is a critical point of $f$, it follows from~\cref{prop: diff metric} that $\grad_\mathrm{new} f(\matu^*,\matv^*)=0$ and thus $\Sigma_{xx}^{-1}\Sigma_{xy}^{}\matv^*=\matu^*\Sigma$ and $\Sigma_{yy}^{-1}\Sigma_{xy}^\T\matu^*=\matv^*\Sigma$. Hence, we can simplify the Riemannian Hessian $\Hess_{\mathrm{new}}\!f(\matu^*,\matv^*)[\eta]$ to
  \begin{equation*}
    \begin{aligned}
      \Hess_{\mathrm{new}}\!f(\matu^*,\matv^*)[\eta]&=\Pi_{\mathrm{new},(\matu^*,\matv^*)}(\mathrm{D} \bar{G}_{\mathrm{new}}(\matu^*,\matv^*)[\eta]) 
      \\
      &=\Pi_{\mathrm{new},(\matu^*,\matv^*)}(-\Sigma_{xx}^{-1}\Sigma_{xy}^{}\eta_2^{}\matN(\matM_{1,2}^*)^{-1}+\eta_1^{}\Sigma\matN(\matM_{1,2}^*)^{-1},\\
      &\qquad\qquad\qquad~~~-\!\Sigma_{yy}^{-1}\Sigma_{xy}^\T\eta_1^{}\matN(\matM_{2,2}^*)^{-1}+\eta_2^{}\Sigma\matN(\matM_{2,2}^*)^{-1}),
    \end{aligned}
  \end{equation*}
  where $\matM_{1,2}^*=\matM_{2,2}^*=(\Sigma^2\matN^2+\delta\matI_m)^{1/2}$ are diagonal matrices, and we use the characterization of $(\tangent_{\matu^*}\!\tensM_1)^\perp$ and $(\tangent_{\matv^*}\!\tensM_2)^\perp$ in~\cref{eq: normal space to new metric}.

  Subsequently, we compute the Rayleigh quotient. It follows from~\cref{eq: tangent space of gen St} that
  \begin{equation*}
    \begin{aligned}
      q(\eta)&=\frac{g_{\mathrm{new},(\matu^*,\matv^*)}(\eta,\Hess_{\mathrm{new}}\!f(\matu^*,\matv^*)[\eta])}{g_{\mathrm{new},(\matu^*,\matv^*)}(\eta,\eta)}\\
      &=\frac{\langle\eta_1,\Sigma_{xx}\eta_1\Sigma\matN\rangle-2\langle\eta_1,\Sigma_{xy}\eta_2\matN\rangle+\langle\eta_2,\Sigma_{yy}\eta_2\Sigma\matN\rangle}{\langle\eta_1,\Sigma_{xx}\eta_1\matM_{1,2}^*\rangle+\langle\eta_2,\Sigma_{yy}\eta_2\matM_{2,2}^*\rangle}
    \end{aligned}
  \end{equation*}
  for $\eta=(\eta_1,\eta_2)\in\tangent_{(\matu^*,\matv^*)}\!\tensM$, where we use the facts that $\dot{\matS}_1$ and $\dot{\matS}_2$ are symmetric and $\langle\eta_1,\Sigma_{xx}\matu^*\dot{\matS}_1\rangle=\langle\eta_2,\Sigma_{yy}\matv^*\dot{\matS}_2\rangle=0$ by~\cref{eq: tangent space of gen St}. 
  
  We observe that only the denominator of $q(\eta)$ is different from the denominator $\langle\eta_1,\Sigma_{xx}\eta_1\rangle+\langle\eta_2,\Sigma_{yy}\eta_2\rangle$ in~\cref{eq: Rayleigh quotient of CCA}, and $\matM_{1,2}^*,\matM_{2,2}^*$ are diagonal matrices. Consequently, we can evaluate the upper and lower bounds of $q(\eta)$ in a similar fashion as~\cref{app: proof}.
\end{proof}

\paragraph{Improved local convergence rate}
We illustrate the effect of the proposed Riemannian metric~\cref{eq: CCA metric new} through the improved condition number, which is able to accelerate the Riemannian methods in the sense of~\cref{thm: local convergence}. To this end, we first provide~\cref{lem: sij} to simplify~\cref{eq: new condition number CCA}. Next, we prove that $\kappa_\mathrm{new}(\Hess_\mathrm{new}\!f(\matu^*,\matv^*))\leq\kappa_g(\Hess_g\!f(\matu^*,\matv^*))$ in~\cref{prop: better condition number}.

\begin{lemma}\label{lem: sij}
  Denote 1) $\bar{v}_{ij}(\delta):=(\mu_i+\mu_j)(\sigma_i+\sigma_j)/(\sqrt{\mu_i^2\sigma_i^2+\delta}+\sqrt{\mu_j^2\sigma_j^2+\delta})$ and $\bar{v}_{i,m+1}(\delta)=\mu_i(\sigma_i+\sigma_{m+1})/\sqrt{\mu_i^2\sigma_i^2+\delta}$; 2) $\underline{v}_{ij}(\delta):=(\mu_i-\mu_j)(\sigma_i-\sigma_j)/(\sqrt{\mu_i^2\sigma_i^2+\delta}+\sqrt{\mu_j^2\sigma_j^2+\delta})$ and $\underline{v}_{i,m+1}(\delta):=\mu_i(\sigma_i-\sigma_{m+1})/\sqrt{\mu_i^2\sigma_i^2+\delta}$ for $i,j\in[m]$.  It holds that 
  \[\bar{v}_{ij}(0)>\bar{v}_{ik}(0)\qquad\text{and}\qquad\underline{v}_{ij}(0)<\underline{v}_{ik}(0)\]
  for all $1\leq i<j<k\leq m+1$ with $m\geq 3$.
\end{lemma}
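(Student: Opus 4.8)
The plan is to establish both families of inequalities by reducing each to an elementary monotonicity statement about a single one-variable function, exploiting the structure of $\bar v_{ij}$ and $\underline v_{ij}$ as ratios built from the ordered data $\sigma_1\geq\sigma_2\geq\cdots$ and $\mu_1>\mu_2>\cdots$. Setting $\delta=0$, we have $\sqrt{\mu_i^2\sigma_i^2}=\mu_i\sigma_i$, so $\bar v_{ij}(0)=(\mu_i+\mu_j)(\sigma_i+\sigma_j)/(\mu_i\sigma_i+\mu_j\sigma_j)$ and $\underline v_{ij}(0)=(\mu_i-\mu_j)(\sigma_i-\sigma_j)/(\mu_i\sigma_i+\mu_j\sigma_j)$, with the convention $\mu_{m+1}:=0$ so that the border cases $j=m+1$ or $k=m+1$ fit the same formula (one checks $\bar v_{i,m+1}(0)=\mu_i(\sigma_i+\sigma_{m+1})/(\mu_i\sigma_i)$ matches the stated definition, and likewise for $\underline v$). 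Thus it suffices to prove: for fixed $i$, the map $j\mapsto\bar v_{ij}(0)$ is strictly decreasing and $j\mapsto\underline v_{ij}(0)$ is strictly increasing as $j$ ranges over $i+1,\dots,m+1$.

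First I would handle $\bar v$. Fixing $i$ and writing $a=\mu_i$, $b=\sigma_i$, and letting $(\mu_j,\sigma_j)=(c,d)$ with $0<c\leq\mu_i$ and $0<d\leq\sigma_i$ (with at least one inequality strict since $\mu$'s are distinct), I would show $\bar v_{ij}(0)=(a+c)(b+d)/(ab+cd)$ strictly decreases when $(c,d)$ decreases coordinatewise. The cleanest route is to compute the partial derivatives of $h(c,d):=(a+c)(b+d)/(ab+cd)$ and verify $\partial h/\partial c>0$ and $\partial h/\partial d>0$ on the region $0<c<a$, $0<d<b$; then since going from index $j$ to index $k>j$ strictly decreases $\sigma$ (weakly) and $\mu$ (strictly), the value strictly drops. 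A short computation gives $\partial h/\partial c=\big[(b+d)(ab+cd)-(a+c)(b+d)d\big]/(ab+cd)^2=(b+d)(ab-ad)/(ab+cd)^2=a(b+d)(b-d)/(ab+cd)^2\geq 0$, with equality only if $b=d$; symmetrically $\partial h/\partial d=b(a+c)(a-c)/(ab+cd)^2\geq 0$ with equality only if $a=c$. Since for consecutive admissible indices we cannot have both $a=c$ and $b=d$ (the $\mu$'s are strictly decreasing), strict monotonicity follows. The border case $k=m+1$, i.e. $(c,d)=(0,\sigma_{m+1})$, is covered by the same formula and the same sign analysis (now with $c=0$ allowed, the derivative in $d$ is still positive so the limiting value is still smaller).

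Next I would treat $\underline v$, which is genuinely the harder case because the numerator $(a-c)(b-d)$ now also shrinks as $(c,d)$ decreases, so the direction of monotonicity is not obvious and one must show the denominator shrinks faster. Write $k(c,d):=(a-c)(b-d)/(ab+cd)$ on $0<c<a$, $0<d<b$; I would show $\partial k/\partial c<0$ and $\partial k/\partial d<0$, so that $\underline v_{ij}(0)$ strictly increases as the index $j$ increases (since then $c,d$ decrease). A direct computation gives $\partial k/\partial c=\big[-(b-d)(ab+cd)-(a-c)(b-d)d\big]/(ab+cd)^2=-(b-d)\big[(ab+cd)+(a-c)d\big]/(ab+cd)^2=-(b-d)(ab+ad)/(ab+cd)^2=-a(b-d)(b+d)/(ab+cd)^2\leq 0$, vanishing only when $b=d$; symmetrically $\partial k/\partial d=-b(a-c)(a+c)/(ab+cd)^2\leq 0$, vanishing only when $a=c$. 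As before, consecutive admissible indices cannot make both vanish, so the inequality is strict. The border case $k=m+1$ (so $(c,d)=(0,\sigma_{m+1})$) again fits: plugging $c=0$ gives $k(0,d)=ab(b-d)/(ab\cdot\,\cdot)$... more precisely $\underline v_{i,m+1}(0)=a(b-\sigma_{m+1})/(ab)=(b-\sigma_{m+1})/b$, and the sign computation shows this is larger than $\underline v_{ij}(0)$ for any $j\leq m$, consistent with monotonicity.

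Assembling: for each fixed $i$, chaining the strict inequalities across consecutive indices yields $\bar v_{ij}(0)>\bar v_{ik}(0)$ and $\underline v_{ij}(0)<\underline v_{ik}(0)$ for all $i<j<k\leq m+1$, which is the claim; the hypothesis $m\geq 3$ is exactly what guarantees the existence of a triple $i<j<k\leq m+1$ with $k\leq m+1$, i.e. that there is something to prove. The main obstacle is the $\underline v$ case — specifically, recognizing that after the $\delta=0$ substitution the awkward square-root denominators collapse to $\mu_i\sigma_i+\mu_j\sigma_j$, after which the sign of the partial derivatives is a two-line calculation; without that simplification the monotonicity is opaque. I would also take care to state explicitly the convention $\mu_{m+1}:=0$ (so $\sqrt{\mu_{m+1}^2\sigma_{m+1}^2+\delta}|_{\delta=0}=0$ and the $j=m+1$ formulas degenerate correctly), since this is what unifies the interior and boundary cases under one computation.
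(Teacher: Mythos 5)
Your proof is correct. Both you and the paper make the same key reduction: at $\delta=0$ the square-root denominators collapse to $\mu_i\sigma_i+\mu_j\sigma_j$, and the convention $\mu_{m+1}:=0$ folds the boundary case $j=m+1$ or $k=m+1$ into the same formula. Where you differ is in how the monotonicity in the second index is then verified. The paper computes the difference $\bar{v}_{ij}(0)-\bar{v}_{ik}(0)$ directly, puts it over a common denominator, and exhibits the numerator as $(\mu_i^2-\mu_j\mu_k)\sigma_i(\sigma_j-\sigma_k)+\mu_i(\mu_j-\mu_k)(\sigma_i^2-\sigma_j\sigma_k)>0$; you instead show that $h(c,d)=(a+c)(b+d)/(ab+cd)$ is strictly increasing in each of $c,d$ on the relevant rectangle via its partial derivatives, which is an equally valid and arguably more transparent route (it isolates \emph{why} the inequality holds). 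For the $\underline{v}$ family the paper exploits the identity $\bar{v}_{ij}(0)+\underline{v}_{ij}(0)=2$, which follows from $(\mu_i+\mu_j)(\sigma_i+\sigma_j)+(\mu_i-\mu_j)(\sigma_i-\sigma_j)=2(\mu_i\sigma_i+\mu_j\sigma_j)$, so the second chain of inequalities is immediate from the first; you prove it from scratch with a second derivative computation. Your way costs a few extra lines but is self-contained; the identity is worth knowing since the paper reuses it in the proof of Proposition 4.8. One trivial slip: the existence of a triple $1\leq i<j<k\leq m+1$ requires only $m\geq 2$, not $m\geq 3$, so your parenthetical justification of the hypothesis is off; this has no bearing on the validity of the argument.
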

\begin{proof}
  We observe that $\bar{v}_{ij}(0)=(\mu_i+\mu_j)(\sigma_i+\sigma_j)/(\mu_i\sigma_i+\mu_j\sigma_j)$ and $\underline{v}_{ij}(0)=(\mu_i-\mu_j)(\sigma_i-\sigma_j)/(\mu_i\sigma_i+\mu_j\sigma_j)$ for $i\in[m]$ and $j\in[m+1]$ with $\mu_{m+1}=0$. First, we prove that $\bar{v}_{ij}(0)>\bar{v}_{ik}(0)$. Since $\mu_i>\mu_j>\mu_k$ and $\sigma_i>\sigma_j>\sigma_k$, it holds that 
  \begin{equation*}
    \begin{aligned}
      \bar{v}_{ij}(0)-\bar{v}_{ik}(0)&=\frac{(\mu_i\sigma_j+\mu_j\sigma_i)(\mu_i\sigma_i+\mu_k\sigma_k)-(\mu_i\sigma_k+\mu_k\sigma_i)(\mu_i\sigma_i+\mu_j\sigma_j)}{(\mu_i\sigma_i+\mu_j\sigma_j)(\mu_i\sigma_i+\mu_k\sigma_k)}\\
      &=\frac{(\mu_i^2-\mu_j\mu_k)\sigma_i(\sigma_j-\sigma_k)+\mu_i(\mu_j-\mu_k)(\sigma_i^2-\sigma_j\sigma_k)}{(\mu_i\sigma_i+\mu_j\sigma_j)(\mu_i\sigma_i+\mu_k\sigma_k)}>0.
    \end{aligned}
  \end{equation*}
  Therefore, $\bar{v}_{ij}(0)>\bar{v}_{ik}(0)$ holds. The results $\underline{v}_{ij}(0)<\underline{v}_{ik}(0)$ is ready by using $\bar{v}_{ij}(0)+\underline{v}_{ij}(0)=2$ and $\bar{v}_{ik}(0)+\underline{v}_{ik}(0)=2$.
\end{proof}

Then, it follows from~\cref{lem: sij} and the continuity of $\bar{v}_{ij}$ and $\underline{v}_{ij}$ with respect to $\delta\in[0,\infty)$ that there exists a constant $\bar{\delta}_1>0$, such that 
\[\bar{v}_{ij}(\delta)>\bar{v}_{ik}(\delta)\qquad\text{and}\qquad\underline{v}_{ij}(\delta)<\underline{v}_{ik}(\delta)\]
hold for all $1\leq i<j<k\leq m+1$ and $\delta\in(0,\bar{\delta}_1)$. Therefore, we can simplify the condition number
\begin{align}
  \kappa_\mathrm{new}(\Hess_\mathrm{new}\!f(\matu^*,\matv^*))&=\frac{\max_{i\in[m],j\in[m+1],i\neq j}\bar{v}_{ij}(\delta)}{\min_{i\in[m],j\in[m+1],i\neq j}\underline{v}_{ij}(\delta)}=\frac{\max_{i\in[m]}\bar{v}_{i,i+1}(\delta)}{\min_{i\in[m]}\underline{v}_{i,i+1}(\delta)}.
  \label{eq: simplified condition number}
\end{align}
We aim to prove that $\kappa_\mathrm{new}(\Hess_\mathrm{new}\!f(\matu^*,\matv^*))\leq\kappa_g(\Hess_g\!f(\matu^*,\matv^*))$ for $m\geq 2$. Note that $\kappa_\mathrm{new}(\Hess_\mathrm{new}\!f(\matu^*,\matv^*))=\kappa_g(\Hess_g\!f(\matu^*,\matv^*))$ for $m=1$ since the right preconditioners in~\cref{eq: CCA metric new} boil down to scalars that no longer have preconditioning effect.
\begin{proposition}\label{prop: better condition number}
  Assume that $m\geq 2$. There exists a constant $\bar{\delta}>0$, such that 
  \[\kappa_\mathrm{new}(\Hess_\mathrm{new}\!f(\matu^*,\matv^*))\leq\kappa_g(\Hess_g\!f(\matu^*,\matv^*))\]
  holds for all $\delta\in(0,\bar{\delta})$ in~\cref{eq: CCA metric new}.
\end{proposition}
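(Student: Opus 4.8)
The plan is to compare the two condition numbers in the limit $\delta\to 0^+$ of the regularization parameter of~\cref{eq: CCA metric new} and then dispose of the borderline case of equality separately. Throughout, abbreviate $\kappa_\mathrm{new}(\delta):=\kappa_\mathrm{new}(\Hess_\mathrm{new}\!f(\matu^*,\matv^*))$, highlighting the dependence on $\delta$, and $\kappa_g:=\kappa_g(\Hess_g\!f(\matu^*,\matv^*))$ (independent of $\delta$). First I would note that, with the convention $\mu_{m+1}:=0$, the functions $\bar{v}_{ij},\underline{v}_{ij}$ from~\cref{lem: sij} are continuous in $\delta\in[0,\infty)$ and $\underline{v}_{i,i+1}(0)>0$ for every $i\in[m]$ (since $\mu_i>\mu_{i+1}\ge 0$ and $\sigma_i>\sigma_{i+1}$), so $\kappa_\mathrm{new}(\cdot)$, which equals~\cref{eq: simplified condition number} on $(0,\bar\delta_1)$, extends continuously to $\delta=0$. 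Setting $p_i:=\mu_i\sigma_i$, $r_i:=(\mu_i\sigma_{i+1}+\mu_{i+1}\sigma_i)/(p_i+p_{i+1})\in(0,1)$ and $r^*:=\max_{i\in[m]}r_i$, an expansion of the numerators gives $\bar{v}_{i,i+1}(0)=1+r_i$ and $\underline{v}_{i,i+1}(0)=1-r_i$, hence by~\cref{eq: simplified condition number}
\[\kappa_\mathrm{new}(0):=\lim_{\delta\to 0^+}\kappa_\mathrm{new}(\delta)=\frac{\max_{i\in[m]}(1+r_i)}{\min_{i\in[m]}(1-r_i)}=\frac{1+r^*}{1-r^*}.\]

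Next I would prove $\kappa_g\ge\kappa_\mathrm{new}(0)$. Writing $\kappa_g=N_g/D_g$ with $N_g,D_g$ the numerator and denominator of~\cref{prop: condition number of Shustin}, a telescoping argument reduces the inner minimum in $D_g$ to adjacent pairs, $\min_{i\neq j}(\mu_i-\mu_j)(\sigma_i-\sigma_j)=\min_{1\le i\le m-1}(\mu_i-\mu_{i+1})(\sigma_i-\sigma_{i+1})$, while $\frac{1}{2}(\mu_i\pm\mu_{i+1})(\sigma_i\pm\sigma_{i+1})=\frac{1}{2}(p_i+p_{i+1})(1\pm r_i)$ and $\mu_m(\sigma_m\pm\sigma_{m+1})=p_m(1\pm r_m)$. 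Choosing $i_0\in[m]$ with $r_{i_0}=r^*$ and using monotonicity of $\{\mu_i\}$ and $\{\sigma_i\}$, one gets $N_g\ge\frac{1}{2}(p_{i_0}+p_{i_0+1})(1+r^*)$ and $D_g\le\frac{1}{2}(p_{i_0}+p_{i_0+1})(1-r^*)$ when $i_0<m$ (the factor $\frac{1}{2}(p_{i_0}+p_{i_0+1})$ being replaced by $p_m$ when $i_0=m$); dividing gives $\kappa_g\ge(1+r^*)/(1-r^*)=\kappa_\mathrm{new}(0)$.

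If this last inequality is strict, continuity of $\kappa_\mathrm{new}$ at $0$ supplies $\bar\delta\in(0,\bar\delta_1)$ with $\kappa_\mathrm{new}(\delta)\le\kappa_g$ on $(0,\bar\delta)$ and we are done. The hard part is the equality case $\kappa_g=\kappa_\mathrm{new}(0)$: continuity alone is insufficient, since bounding the quotient in~\cref{eq: simplified condition number} below by its term of common index shows $\kappa_\mathrm{new}(\delta)\ge\kappa_\mathrm{new}(0)$ for all admissible $\delta$. To handle it I would track when the bounds of the previous step are tight; using that $\{\mu_i\}$, $\{\sigma_i\}$ and hence $\{p_i\}$ are \emph{strictly} decreasing and that $m\ge 2$, equality forces $i_0=1$, $N_g=\frac{1}{2}(\mu_1+\mu_2)(\sigma_1+\sigma_2)$, $D_g=\frac{1}{2}(\mu_1-\mu_2)(\sigma_1-\sigma_2)$, and $r_1=r^*>r_i$ for every $i\ge 2$. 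Consequently the maximum of $\{\bar{v}_{i,i+1}(0)\}$ and the minimum of $\{\underline{v}_{i,i+1}(0)\}$ are attained only at $i=1$; after shrinking $\bar\delta$ so this persists for $\delta\in(0,\bar\delta)$, the shared factor $\sqrt{p_1^2+\delta}+\sqrt{p_2^2+\delta}$ cancels in~\cref{eq: simplified condition number} and
\[\kappa_\mathrm{new}(\delta)=\frac{\bar{v}_{1,2}(\delta)}{\underline{v}_{1,2}(\delta)}=\frac{(\mu_1+\mu_2)(\sigma_1+\sigma_2)}{(\mu_1-\mu_2)(\sigma_1-\sigma_2)}=\frac{N_g}{D_g}=\kappa_g,\]
so $\kappa_\mathrm{new}(\delta)\le\kappa_g$ (with equality) on $(0,\bar\delta)$ in this case as well.

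The routine ingredients are the monotone rearrangements and algebraic identities of the second step together with the continuity bookkeeping. The genuine obstacle is the degenerate configuration $\kappa_g=\kappa_\mathrm{new}(0)$: one must show that equality at $\delta=0$ pins down a \emph{unique} maximizing index (namely $i=1$, which is exactly where strict monotonicity of $\mu_i,\sigma_i$ and $m\ge 2$ enter) and that the $\delta$-dependent denominators then cancel, so that $\kappa_\mathrm{new}(\delta)\equiv\kappa_g$ for $\delta$ near $0$.
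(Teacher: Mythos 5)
Your proof is correct, and it takes a genuinely different route from the paper's. The paper splits on the structure of $\argmax_{i\in[m]}\bar{v}_{i,i+1}(0)$: when this set is a singleton $\{i^*\}$, the identity $\bar{v}_{i,i+1}(0)+\underline{v}_{i,i+1}(0)=2$ forces the minimizer of $\underline{v}_{i,i+1}(0)$ to be the same index, the $\delta$-dependent square roots cancel, and $\kappa_\mathrm{new}$ collapses to a single adjacent-pair ratio that is one of the candidates defining $\kappa_g$; when there are ties, the paper differentiates $\bar{v}_{i,i+1}$ and $\underline{v}_{i,i+1}$ at $\delta=0$ to show the tied indices separate for small $\delta>0$, producing a strict inequality. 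You instead normalize via $\bar{v}_{i,i+1}(0)=1+r_i$, $\underline{v}_{i,i+1}(0)=1-r_i$ to get the closed form $\kappa_\mathrm{new}(0)=(1+r^*)/(1-r^*)$, prove $\kappa_g\geq(1+r^*)/(1-r^*)$ by monotone rearrangement (reducing the minimum in the denominator of $\kappa_g$ to adjacent pairs and comparing against the index attaining $r^*$), and then split on whether this inequality is strict: continuity handles the strict case, and in the equality case a rigidity argument---strict monotonicity of $\mu_i$ and $\sigma_i$ together with $m\geq 2$ forces the maximizer of $r_i$ to be uniquely $i=1$---restores the cancellation of the $\delta$-dependent factors, giving $\kappa_\mathrm{new}(\delta)\equiv\kappa_g$ near $0$. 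Your route buys a cleaner argument that avoids the derivative computation entirely, and it correctly diagnoses why continuity alone cannot finish (the lower bound $\kappa_\mathrm{new}(\delta)\geq\kappa_\mathrm{new}(0)$); the paper's route buys the extra qualitative information that the inequality is strict whenever the maximizer of $\bar{v}_{i,i+1}(0)$ is not unique. One cosmetic imprecision: with $\mu_{m+1}=0$ one has $r_m=\sigma_{m+1}/\sigma_m$, which can vanish, so $r_i\in[0,1)$ rather than $(0,1)$; this does not affect the argument since $r^*\geq r_1>0$ and $1-r^*>0$ in any case.
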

\begin{proof}
  If $\argmax_{i\in[m]}\bar{v}_{i,i+1}(0)=\{i^*\}$ for some $i^*\in[m]$, it follows from $\bar{v}_{i,i+1}(0)+\underline{v}_{i,i+1}(0)=2$ that $\argmin_{i\in[m]}\underline{v}_{i,i+1}(0)=\argmax_{i\in[m]}\bar{v}_{i,i+1}(0)=\{i^*\}$. Since $\bar{v}_{ij}(\delta)$ and $\underline{v}_{ij}(\delta)$ are continuous, there exists $\bar{\delta}>0$, such that $\{i^*\}=\argmax_{i\in[m]}\bar{v}_{i,i+1}(\delta)$ and $\{i^*\}=\argmin_{i\in[m]}\underline{v}_{i,i+1}(\delta)$ for all $\delta\in[0,\bar{\delta})$. Subsequently, we obtain that
    \begin{align}
      \kappa_\mathrm{new}(\Hess_\mathrm{new}\!f(\matu^*,\matv^*))&=\frac{\bar{v}_{i^*,i^*+1}(\delta)}{\underline{v}_{i^*,i^*+1}(\delta)}=\frac{\frac12(\mu_{i^*}+\mu_{i^*+1})(\sigma_{i^*}+\sigma_{i^*+1})}{\frac12(\mu_{i^*}-\mu_{i^*+1})(\sigma_{i^*}-\sigma_{i^*+1})}\nonumber\\
      &\leq\frac{\max\{\frac12(\mu_1+\mu_2)(\sigma_1+\sigma_2),\mu_1(\sigma_1+\sigma_{m+1})\}}{\min\{\underset{i,j\in[m],i\neq j}{\min}\frac12(\mu_i-\mu_j)(\sigma_i-\sigma_j),\mu_m(\sigma_m-\sigma_{m+1})\}}\nonumber\\
      &=\kappa_g(\Hess_g\!f(\matu^*,\matv^*)).\nonumber
    \end{align}

  If $\{i_1^*,i_2^*\}\subseteq\argmax_{i\in[m]}\bar{v}_{i,i+1}(0)$ for $i_1^*<i_2^*$, it follows from the continuity of $\bar{v}_{ij}$ and $\underline{v}_{ij}$ that there exists $\bar{\delta}_{i_1^*,i_2^*}>0$, such that $\bar{v}_{i_1^*,i_1^*+1}(\delta)>\bar{v}_{i_2^*,i_2^*+1}(\delta)$ and $\underline{v}_{i_1^*,i_1^*+1}(\delta)>\underline{v}_{i_2^*,i_2^*+1}(\delta)$. The rationales behind these are $\bar{v}_{i_1^*,i_1^*+1}(0)=\bar{v}_{i_2^*,i_2^*+1}(0)$, $\underline{v}_{i_1^*,i_1^*+1}(0)=\underline{v}_{i_2^*,i_2^*+1}(0)$, and the derivatives satisfy
  \begin{equation*}
    \begin{aligned}
      \bar{v}_{i_1^*,i_1^*+1}^\prime(0)&=-\frac{\bar{v}_{i_1^*,i_1^*+1}(0)}{2\mu_{i_1^*}\sigma_{i_1^*}\mu_{i_1^*+1}\sigma_{i_1^*+1}}>-\frac{\bar{v}_{i_2^*,i_2^*+1}(0)}{2\mu_{i_2^*}\sigma_{i_2^*}\mu_{i_2^*+1}\sigma_{i_2^*+1}}=\bar{v}_{i_2^*,i_2^*+1}^\prime(0),\\
      \underline{v}_{i_1^*,i_1^*+1}^\prime(0)&=-\frac{\underline{v}_{i_1^*,i_1^*+1}(0)}{2\mu_{i_1^*}\sigma_{i_1^*}\mu_{i_1^*+1}\sigma_{i_1^*+1}}>-\frac{\underline{v}_{i_2^*,i_2^*+1}(0)}{2\mu_{i_2^*}\sigma_{i_2^*}\mu_{i_2^*+1}\sigma_{i_2^*+1}}=\underline{v}_{i_2^*,i_2^*+1}^\prime(0)
    \end{aligned}
  \end{equation*}
  if $i_2^*<m$ and
  \begin{equation*}
    \begin{aligned}
      \bar{v}_{i_1^*,i_1^*+1}^\prime(0)&=-\frac{\bar{v}_{i_1^*,i_1^*+1}(0)}{2\mu_{i_1^*}\sigma_{i_1^*}\mu_{i_1^*+1}\sigma_{i_1^*+1}}>-\frac{\bar{v}_{m,m+1}(0)}{2\mu_{m}^2\sigma_{m}^2}=\bar{v}_{m,m+1}^\prime(0),\\
      \underline{v}_{i_1^*,i_1^*+1}^\prime(0)&=-\frac{\underline{v}_{i_1^*,i_1^*+1}(0)}{2\mu_{i_1^*}\sigma_{i_1^*}\mu_{i_1^*+1}\sigma_{i_1^*+1}}>-\frac{\underline{v}_{m,m+1}(0)}{2\mu_{m}^2\sigma_{m}^2}=\underline{v}_{m,m+1}^\prime(0)
    \end{aligned}
  \end{equation*}
  if $i_2^*=m$. Therefore, there exists $\bar{\delta}\in(0,\min\{\bar{\delta}_{i,j}:i,j\in\argmax_{i\in[m]}\bar{v}_{i,i+1}(0)\})$ and $i^*,j^*\in[m]$, such that: 1) $i^*=\argmax_{i\in[m]}\bar{v}_{i,i+1}(\delta)$; 2) $j^*=\argmin_{i\in[m]}\underline{v}_{i,i+1}(\delta)$ for all $\delta\in[0,\bar{\delta})$; 3) $i^*<j^*$. Consequently, we obtain that 
  \begin{align}
    &~~~~\kappa_\mathrm{new}(\Hess_\mathrm{new}\!f(\matu^*,\matv^*))=\frac{\bar{v}_{i^*,i^*+1}(\delta)}{\underline{v}_{j^*,j^*+1}(\delta)}\nonumber\\
    &=\frac{(\mu_{i^*}+\mu_{i^*+1})(\sigma_{i^*}+\sigma_{i^*+1})}{(\mu_{j^*}-\mu_{j^*+1})(\sigma_{j^*}-\sigma_{j^*+1})}\cdot\frac{\sqrt{\mu_{j^*}^2\sigma_{j^*}^2+1}+\sqrt{\mu_{j^*+1}^2\sigma_{j^*+1}^2+1}}{\sqrt{\mu_{i^*}^2\sigma_{i^*}^2+1}+\sqrt{\mu_{i^*+1}^2\sigma_{i^*+1}^2+1}}\nonumber\\
    &<\frac{\max\{\frac12(\mu_1+\mu_2)(\sigma_1+\sigma_2),\mu_1(\sigma_1+\sigma_{m+1})\}}{\min\{\underset{i,j\in[m],i\neq j}{\min}\frac12(\mu_i-\mu_j)(\sigma_i-\sigma_j),\mu_m(\sigma_m-\sigma_{m+1})\}}\nonumber\\
    &=\kappa_g(\Hess_g\!f(\matu^*,\matv^*)).\nonumber
  \end{align}
\end{proof}

It is worth noting that the parameter $\delta>0$ theoretically ensures that~\cref{eq: CCA metric new} is a Riemannian metric. In practice, one can choose a sufficiently small $\delta$, e.g., $\delta=10^{-15}$.

\subsection{RGD and RCG for canonical correlation analysis}
By using the Riemannian metric~\cref{eq: CCA metric new} and required ingredients, we adapt the Riemannian gradient descent (\cref{alg: RGD}) and Riemannian conjugate gradient (\cref{alg: RCG}) methods to solve the CCA problem in~\cref{alg: RGD CCA,alg: RCG CCA}. 

\begin{algorithm}[htbp]
  \caption{RGD for CCA}\label{alg: RGD CCA}
  \begin{algorithmic}[1]
      \REQUIRE $\tensM$ endowed with a metric~\cref{eq: CCA metric new}, initial guess $(\matu^{(0)},\matv^{(0)})\in\tensM$, $t=0$. 
      \WHILE{the stopping criteria are not satisfied}
          \STATE Compute $\eta^{(t)}=-\grad_g f(\matu^{(t)},\matv^{(t)})$ by~\cref{eq: Riemannian gradient under CCA new metric}.
          \STATE Compute the stepsize $s^{(t)}$ by Armijo backtracking~\cref{eq: Armijo backtracking line search}.
          \STATE Update $\matu^{(t+1)}=\Sigma_{xx}^{-1/2}\qf(\Sigma_{xx}^{1/2}(\matu^{(t)}+\eta_1^{(t)}))$, $\matv^{(t+1)}=\Sigma_{yy}^{-1/2}\qf(\Sigma_{yy}^{1/2}(\matv^{(t)}+\eta_2^{(t)}))$; $t= t+1$.
      \ENDWHILE
      \ENSURE $(\matu^{(t)},\matv^{(t)})\in\tensM$. 
  \end{algorithmic}
\end{algorithm}
\begin{algorithm}[htbp]
  \caption{RCG for CCA}\label{alg: RCG CCA}
  \begin{algorithmic}[1]
      \REQUIRE $\tensM$ endowed with a metric~\cref{eq: CCA metric new}, initial guess $(\matu^{(0)},\matv^{(0)})\in\tensM$, $t=0$, $\beta^{(0)}=0$. 
      \WHILE{the stopping criteria are not satisfied}
          \STATE Compute $\eta^{(t)}=-\grad_g f(\matu^{(t)},\matu^{(t)})+\beta^{(t)}\Pi_{g,(\matu^{(t)},\matv^{(t)})}(\eta^{(t-1)})$ by~\cref{eq: Riemannian gradient under CCA new metric}. 
          \STATE Compute the stepsize $s^{(t)}$ by Armijo backtracking~\cref{eq: Armijo backtracking line search}.
          \STATE Update $\matu^{(t+1)}=\Sigma_{xx}^{-1/2}\qf(\Sigma_{xx}^{1/2}(\matu^{(t)}+\eta_1^{(t)}))$, $\matv^{(t+1)}=\Sigma_{yy}^{-1/2}\qf(\Sigma_{yy}^{1/2}(\matv^{(t)}+\eta_2^{(t)}))$; $t= t+1$.
      \ENDWHILE
      \ENSURE $(\matu^{(t)},\matv^{(t)})\in\tensM$. 
  \end{algorithmic}
\end{algorithm}

Note that 1) the retraction mapping is the \emph{generalized QR factorization}~\cite{sato2019cholesky} with respect to $\Sigma_{xx}$ and $\Sigma_{yy}$, i.e., 
\[\retr_{(\matu,\matv)}(\eta):=(\Sigma_{xx}^{-\frac12}\qf(\Sigma_{xx}^{\frac12}(\matu+\eta_1^{})),\Sigma_{yy}^{-\frac12}\qf(\Sigma_{yy}^{\frac12}(\matv+\eta_2^{})))\quad\text{for }\eta\in\tangent_{(\matu,\matv)}\!\tensM,\]
where $\qf(\matx)$ refers to the $\matQ$ factor in the QR factorization $\matQ\matR=\matx$.
In practice, the retraction can be efficiently computed~\cite{sato2019cholesky} by $\retr_{(\matu,\matv)}(\eta)=((\matu+\eta_1^{})\matR_1^{-1},(\matv+\eta_2^{})\matR_2^{-1})$ instead, where $\matR_1^\T\matR_1^{}=(\matu+\eta_1^{})^\T\Sigma_{xx}^{}(\matu+\eta_1^{})$ and $\matR_2^\T\matR_2^{}=(\matv+\eta_2^{})^\T\Sigma_{yy}^{}(\matv+\eta_2^{})$ are Cholesky factorization; 
2) the vector transport in~\cref{alg: RCG CCA} is defined by the projection operator in~\cref{eq: CCA projection}, i.e., $\tensT_{t\gets t-1}(\eta)=\Pi_{g,(\matu^{(t)},\matv^{(t)})}(\eta)$ for $\eta\in\tangent_{(\matu^{(t-1)},\matv^{(t-1)})}\!\tensM$.

\subsection{Numerical validation}\label{subsec: numerical CCA}
\cref{alg: RGD CCA,alg: RCG CCA} are implemented in toolbox Manopt v7.1.0~\cite{boumal2014manopt}, a Matlab library for Riemannian methods. The stopping criteria are the same as default settings in Manopt. All experiments are performed on a MacBook Pro 2019 with MacOS Ventura 13.3, 2.4 GHz 8 core Intel Core i9 processor, 32GB memory, and Matlab R2020b. The codes are available at~\url{https://github.com/JimmyPeng1998/popman}.

\begin{table}[htbp]
  \centering
  \footnotesize
  \caption{Compared metrics in CCA.}
  \label{tab: CCA compared metrics}
  \begin{tabular}{crrrrr}
    \toprule
     & (E) & (L1) & (L2) & (L12) & (LR12)\\
    \midrule
    $\bartensH_{1}(\matu,\matv)[\eta_1]$ & $\eta_1$ & $\Sigma_{xx}\eta_1$ & $\eta_1$ & $\Sigma_{xx}\eta_1$ & $\Sigma_{xx}\eta_1\matM_{1,2}$\\
    $\bartensH_{2}(\matu,\matv)[\eta_2]$ & $\eta_2$ & $\eta_2$ & $\Sigma_{yy}\eta_2$ & $\Sigma_{yy}\eta_2$ & $\Sigma_{yy}\eta_2\matM_{2,2}$\\
    \bottomrule
  \end{tabular}
\end{table}

We test the performance of RGD and RCG under different metrics, i.e., five different choice of $\bartensH_1,\bartensH_2$ in $g_{(\matu,\matv)}(\xi,\eta)=\langle\xi_1, \bartensH_{1}(\matu,\matv)[\eta_1]\rangle+\langle\xi_2, \bartensH_{2}(\matu,\matv)[\eta_2]\rangle$;
see~\cref{tab: CCA compared metrics}. The Euclidean metric is denoted by ``(E)''. ``(L1)'' and ``(L2)'' are the metrics wherein only one component of $\tensM=\tensM_1\times\tensM_2$ is endowed with a preconditioned metric. The metric~\cref{eq: CCA metric by Shustin} proposed by~\cite{shustin2023riemannian} is called ``(L12)''.
The metric~\cref{eq: CCA metric new}, denoted by ``(LR12)'', has the effect of preconditioning both on the left and right. We set $d_x=800$, $d_y=400$, $n=30000$, $m=5$, $\delta=10^{-15}$, $\lambda_x=\lambda_y=10^{-6}$, and $\matN=\diag(m,m-1,\dots,1)$. Elements of the data matrices $\matx$ and $\matY$ are sampled i.i.d. from the uniform distribution on~$[0,1]$. The performance of a method is evaluated by the residual $(f(\matu,\matv)-f_{\min})$, gradient norm ``gnorm'', and the subspace distances $D(\matu,\matu^*):=\|\matu\matu^\T-\matu^*(\matu^*)^\T\|_\mathrm{F}$ and $D(\matv,\matv^*):=\|\matv\matv^\T-\matv^*(\matv^*)^\T\|_\mathrm{F}$, where $f_{\min}=f(\matu^*,\matv^*)$ and $(\matu^*,\matv^*)$ is defined in~\cref{eq: cca solution}.

Numerical results are reported in~\cref{fig: CCA synthetic results,fig: CCA computational cost,tab: CCA synthetic results}. We have following observations: 1) the proposed metric~\cref{eq: CCA metric new} improves the performance of RGD and RCG since it benefits more from the second-order information; 2) \cref{fig: CCA computational cost} shows that the computation time per iteration of~\cref{alg: RGD CCA,alg: RCG CCA} is comparable to RGD(L12) and RCG(L12); 3) \cref{tab: CCA synthetic results} illustrates that RGD(LR12) and RCG(LR12) require fewer iterations and less time to reach the stopping criteria than the others. The subspace distances are smaller than $10^{-8}$, and hence the sequences generated by proposed methods converge to the correct subspace.

\begin{figure}[htbp]
    \centering
    \subfigure{\includegraphics[width=0.48\textwidth]{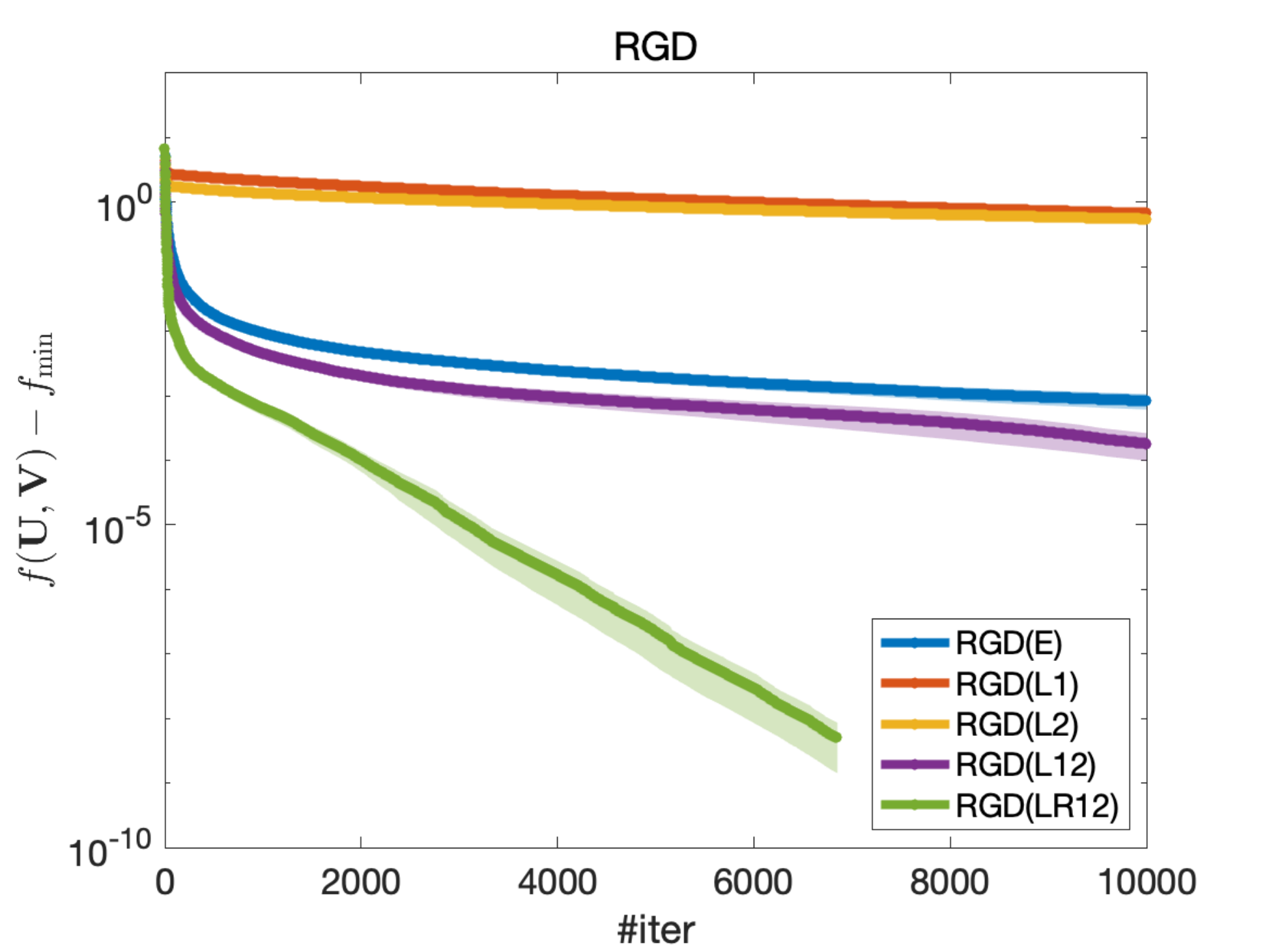}}
    \subfigure{\includegraphics[width=0.48\textwidth]{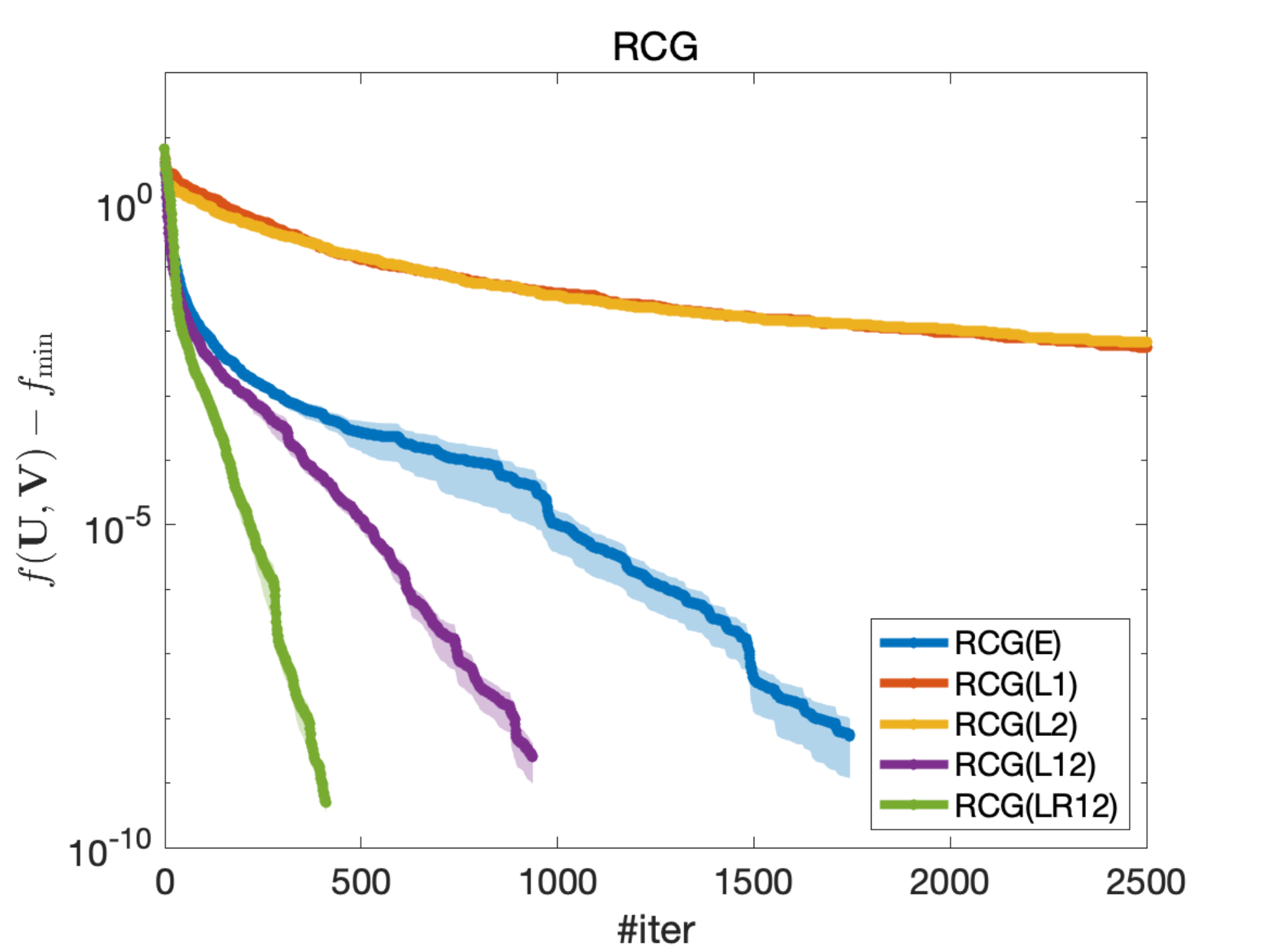}}
    \vspace{-1mm}
    \caption{Numerical results for CCA problem for $d_x=800$, $d_y=400$, and $m=5$. Left: RGD. Right: RCG. Each method is tested for 10 runs.}
    \label{fig: CCA synthetic results}
\end{figure}

\begin{figure}[htbp]
  \centering
  \includegraphics[width=0.9\textwidth]{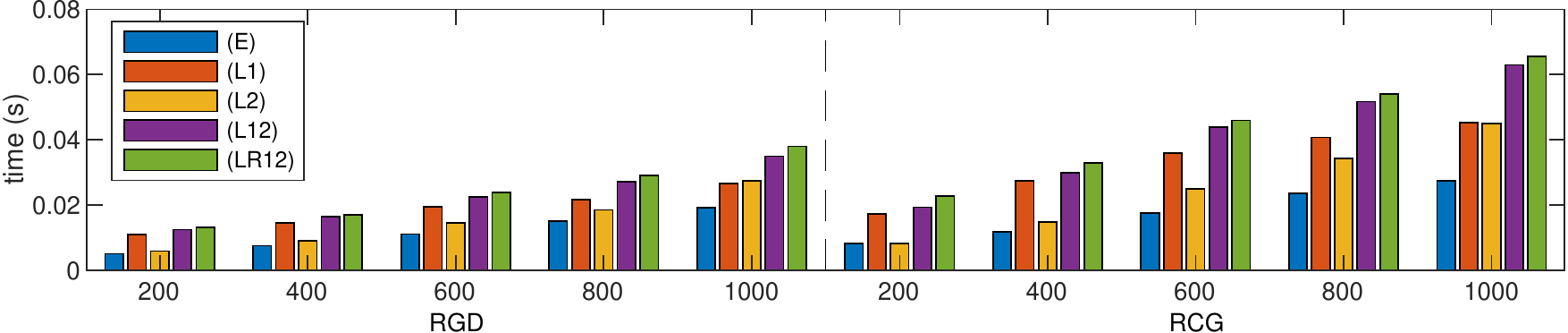}
  \vspace{-1mm}
  \caption{Computation time per iteration for RGD (left) and RCG (right) under different metrics for CCA problem for $d_x=800$, $m=5$, and $d_y=200,400,\dots,1000$.}
  \label{fig: CCA computational cost}
\end{figure}

\begin{table}[htbp]
  \centering
  \footnotesize
  \caption{Convergence results of the CCA problem for $d_x=800$, $d_y=400$, and $m=5$.}
  \label{tab: CCA synthetic results}
  \begin{tabular}{lcrrccccc}
      \toprule
      metric & \multirow{1}*{method} & \multirow{1}*{\#iter} & \multirow{1}*{time (s)} 
      & \multirow{1}*{gnorm} & \multirow{1}*{$D(\matu,\matu^*)$} & \multirow{1}*{$D(\matv,\matv^*)$} & $\kappa_g$\\
      \midrule
      \multirow{2}*{(E)} & RGD & 10000 & 249.11 & 5.95e-02 & 2.69e-05 & 2.66e-05 & \multirow{2}*{2.10e+04}\\
       & RCG & 1745 & 31.03 & 1.70e-05 & 4.01e-10 & 3.89e-10 & \\
      \multirow{2}*{(L1)} & RGD & 10000 & 255.33 & 1.02e+00 & 4.12e-04 & 4.07e-04 & \multirow{2}*{1.43e+07}\\
      & RCG & 2500 & 74.13 & 4.94e-02 & 2.85e-04 & 2.79e-04 & \\
      \multirow{2}*{(L2)} & RGD & 10000 & 245.81 & 8.20e-01 & 4.13e-04 & 4.05e-04 & \multirow{2}*{1.52e+07}\\
       & RCG  & 2500 & 56.16 & 6.90e-02 & 2.93e-04 & 2.90e-04 & \\
      \multirow{2}*{(L12)} & RGD & 10000 & 274.91 & 4.67e-04 & 9.68e-07 & 9.57e-07 & \multirow{2}*{1.12e+04}\\
       & RCG & 937 & 30.39 & 8.82e-07 & 1.68e-09 & 1.65e-09 & \\
      \multirow{2}*{(LR12)} & RGD & 6607 & 195.03 & 1.34e-06 & \textbf{7.47e-16} & \textbf{7.46e-16} & \multirow{2}*{\textbf{2.38e+03}} \\
       & \textbf{RCG} & \textbf{410} & \textbf{15.38} & \textbf{8.49e-07} & 4.63e-09 & 4.53e-09 & \\
      \bottomrule
  \end{tabular}
\end{table}

Moreover, the condition number of the Riemannian Hessian is numerically computed by the Manopt function \texttt{hessianspectrum}: $\kappa_g(\Hess_g\!f(\matu^*,\matv^*))$ of five metrics are $2.10\cdot 10^4$ (E), $1.43\cdot 10^7$ (L1), $1.52\cdot 10^7$ (L2), $1.12\cdot 10^4$ (L12), and $2.38\cdot 10^3$ (LR12). It is direct to verify that these numbers coincide with the theoretical results in~\cref{prop: condition number of Shustin,prop: condition number of CCA in new metric,prop: better condition number}. We observe that the Riemannian Hessian under the proposed metric (LR12) has the smallest condition number among all choices, which is reflected in the observation that RGD(LR12) and RCG(LR12) outperform the others.

\section{Application to truncated singular value decomposition}\label{sec: SVD}
In this section, the truncated singular value decomposition (SVD) problem is considered. Given a matrix $\mata\in\mathbb{R}^{m\times n}$, the $p<\min\{m,n\}$ largest singular vectors $(\matu^*,\matv^*)$ is the global minimizer of the following problem,
\begin{equation}\label{eq: SVD}
  \min\limits_{\matu, \matv}f(\matu,\matv):=-\tr(\matu^\T\matA\matv\matN),\ \subjectto\ (\matu,\matv)\in\tensM:=\St(p,m)\times\St(p,n),
\end{equation}
where $\St(p,m):=\{\matu\in\mathbb{R}^{m\times p}:\matu^\T\matu=\matI_p\}$ is the \emph{Stiefel manifold} and $\matN:=\diag\{\mu_1,\dots,\mu_p\}$ with $\mu_1>\mu_2>\cdots>\mu_p>0$. Sato and Iwai~\cite{sato2013riemannian} proposed RGD and RCG methods to solve problem~\cref{eq: SVD}, where the search space is endowed with the Euclidean metric. We apply the proposed framework to solve~\cref{eq: SVD} by endowing~$\tensM$ with a non-Euclidean metric to accelerate the Riemannian optimization methods. 

\subsection{A new preconditioned metric}
Observe that the Riemannian Hessian of~$f$ at $(\matu,\matv)$ along $\eta=(\eta_1,\eta_2)\in\tangent_{(\matu,\matv)}\!\tensM$ is given by
\begin{equation*}
  \begin{aligned}
    \Hess_{\eucmetric}\!f(\matu,\matv)[\eta]=(&\eta_1\matM_1-\mata\eta_2\matN-\matu\sym(\matu^\T(\eta_1\matM_1-\mata\eta_2\matN)),\\
    &\eta_2\matM_{2}-\mata^\T\eta_1\matN-\matv\sym(\matv^\T(\eta_2\matM_2-\mata^\T\eta_1\matN)))
  \end{aligned}
\end{equation*}
in~\cite[Proposition 3.5]{sato2013riemannian}, where $\matM_1:=\sym(\matu^\T\mata\matv\matN)$ and $\matM_2:=\sym(\matv^\T\mata^\T\matu\matN)$. 
Taking advantage of the diagonal blocks of the Riemannian Hessian and the left and right preconditioning in~\cref{subsec: diagonal approximation}, we define a new preconditioned metric on $\tensM$:
\begin{equation}
  \label{eq: new metric in SVD}
  g_{\mathrm{new},(\matU,\matv)}(\xi,\eta):=\langle\xi_1,\eta_1\matM_{1,2}\rangle+\langle\xi_2,\eta_2\matM_{2,2}\rangle\quad\text{for}\ \xi,\eta\in\tangent_{(\matU,\matv)}\!\tensM, 
\end{equation}
where $\matM_{1,2}=(\sym(\matu^\T\mata\matv\matN)^2+\delta\matI_p)^{1/2}$, $\matM_{2,2}=(\sym(\matv^\T\mata^\T\matu\matN)^2+\delta\matI_p)^{1/2}$,
and $\delta>0$. Note that the left preconditioners are chosen as the identity matrix. The projection operator with respect to~\cref{eq: new metric in SVD} is given by
\begin{equation}
  \Pi_{\mathrm{new},(\matu,\matv)}(\bareta)=(\bareta_1-\matu\matS_1^{}\matM_{1,2}^{-1},\bareta_2-\matv\matS_2^{}\matM_{2,2}^{-1})
  \label{eq: projection operator in SVD}
\end{equation}
for $\bareta\in\tangent_{(\matu,\matv)}\!(\mathbb{R}^{m\times p}\times\mathbb{R}^{n\times p})\simeq \mathbb{R}^{m\times p}\times\mathbb{R}^{n\times p}$, where $\matS_1$, $\matS_2$ are the unique solutions of the Lyapunov equations $\matM_{1,2}^{-1}\matS_1^{}+\matS_1^{}\matM_{1,2}^{-1}=2\sym(\matu^\T\bareta_1)$, $\matM_{2,2}^{-1}\matS_2^{}+\matS_2^{}\matM_{2,2}^{-1}=2\sym(\matv^\T\bareta_2)$. Then, it follows from~\cref{prop: Riemannian gradient as Riemannian submanifold of product space,eq: projection operator in SVD} that 
\begin{equation}\label{eq: SVD gradient g}
  \begin{aligned}
    \grad_{\mathrm{new}} f(\matu,\matv) = ( \mata\matv\matN\matM_{1,2}^{-1}-\matu\matS_1^{}\matM_{1,2}^{-1},\mata^\T\matu\matN\matM_{2,2}^{-1}-\matv\matS_2^{}\matM_{2,2}^{-1}).
  \end{aligned}
\end{equation}
These results can be obtained in a same fashion as in~\cref{prop: orthogonal projection operator of new metric} for CCA. Note that the computational cost of the Riemannian gradient~\cref{eq: SVD gradient g} is comparable to one under the Euclidean metric since $\matM_{1,2},\matM_{2,2}\in\mathbb{R}^{p\times p}$ and $p\ll\min\{m,n\}$. 

The effect of the new metric~\cref{eq: new metric in SVD} is illustrated by the following proposition, which can be proved in a same fashion by letting $\Sigma_{xx}=\matI_{d_x}$, $\Sigma_{yy}=\matI_{d_y}$, $\Sigma_{xy}=\mata$, $d_x=m$ and $d_y=n$ in~\cref{prop: condition number of Shustin,prop: condition number of CCA in new metric}.
\begin{proposition}\label{prop: condition number of SVD}
  Let $\sigma_1>\sigma_2>\cdots>\sigma_p>\sigma_{p+1}\geq\cdots\geq\sigma_{\min\{m,n\}}$ be the singular values of $\mata$,  $\matu^*$ and $\matv^*$ be the $p$ largest left and right singular vectors of $\mata$ respectively. It holds that 
  \begin{equation*}
    \begin{aligned}
      \kappa_\eucmetric(\Hess_\eucmetric\!f(\matu^*,\matv^*)) &=\frac{\max\left\{\frac12(\mu_1+\mu_2)(\sigma_1+\sigma_2),\mu_1(\sigma_1+\sigma_{p+1})\right\}}{\min\{\min_{i,j\in[p],i\neq j}\frac12(\mu_i-\mu_j)(\sigma_i-\sigma_j),\mu_p(\sigma_p-\sigma_{p+1})\}},
      \\
      \kappa_\mathrm{new}(\Hess_\mathrm{new}\!f(\matu^*,\matv^*))&=\frac{\max\{\underset{i,j\in[p],i\neq j}{\max}\frac{(\mu_i+\mu_j)(\sigma_i+\sigma_j)}{\sqrt{\mu_i^2\sigma_i^2+\delta}+\sqrt{\mu_j^2\sigma_j^2+\delta}},\underset{i\in[p]}{\max}\frac{\mu_i(\sigma_i+\sigma_{p+1})}{\sqrt{\mu_i^2\sigma_i^2+\delta}}\}}{\min\{\underset{i,j\in[p],i\neq j}{\min}\frac{(\mu_i-\mu_j)(\sigma_i-\sigma_j)}{\sqrt{\mu_i^2\sigma_i^2+\delta}+\sqrt{\mu_j^2\sigma_j^2+\delta}},\underset{i\in[p]}{\min}\frac{\mu_i(\sigma_i-\sigma_{p+1})}{\sqrt{\mu_i^2\sigma_i^2+\delta}}\}}.
    \end{aligned}
  \end{equation*}
  Moreover, the new metric~\cref{eq: new metric in SVD} indeed improves the condition number of the Riemannian Hessian, i.e., 
  \[\kappa_\mathrm{new}(\Hess_\mathrm{new}\!f(\matu^*,\matv^*))\leq\kappa_\eucmetric(\Hess_\eucmetric\!f(\matu^*,\matv^*)).\]
\end{proposition}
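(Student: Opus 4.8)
The plan is to reduce the statement to the corresponding CCA results via a straightforward specialization. Problem \cref{eq: SVD} is the instance of \cref{eq: CCA} obtained by setting $\Sigma_{xx}=\matI_m$, $\Sigma_{yy}=\matI_n$, $\Sigma_{xy}=\mata$, $d_x=m$, and $d_y=n$: under this substitution the generalized Stiefel manifolds $\St_{\Sigma_{xx}}(p,d_x)\times\St_{\Sigma_{yy}}(p,d_y)$ become $\St(p,m)\times\St(p,n)$, the von Neumann cost $-\tr(\matu^\T\Sigma_{xy}\matv\matN)$ becomes $-\tr(\matu^\T\mata\matv\matN)$, the singular values of $\Sigma_{xx}^{-1/2}\Sigma_{xy}\Sigma_{yy}^{-1/2}$ become those of $\mata$, and the CCA minimizer \cref{eq: cca solution} becomes $(\matu^*,\matv^*)$, the $p$ leading left and right singular vectors of $\mata$. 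First I would check that the two metrics to be compared also specialize faithfully: \cref{eq: CCA metric by Shustin} collapses to the Euclidean metric $\langle\xi_1,\eta_1\rangle+\langle\xi_2,\eta_2\rangle$, while \cref{eq: CCA metric new} collapses to \cref{eq: new metric in SVD} because $\matM_{1,2},\matM_{2,2}$ reduce to $(\sym(\matu^\T\mata\matv\matN)^2+\delta\matI_p)^{1/2}$ and $(\sym(\matv^\T\mata^\T\matu\matN)^2+\delta\matI_p)^{1/2}$.

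Given this dictionary, the two displayed formulas for $\kappa_\eucmetric$ and $\kappa_\mathrm{new}$ follow by substituting $m\mapsto p$ into \cref{prop: condition number of Shustin} and \cref{prop: condition number of CCA in new metric}, respectively. For a self-contained derivation I would instead rerun, in the SVD setting, the three-step recipe already used for those propositions: (i) compute $\Hess_\eucmetric\!f(\matu^*,\matv^*)[\eta]$, which is available from \cite[Proposition 3.5]{sato2013riemannian}, and for the new metric note that at the critical point the Lyapunov equations give $\matS_1=\matS_2=-\Sigma\matN$ with $\Sigma=\diag(\sigma_1,\dots,\sigma_p)$, so $\matM_{1,2},\matM_{2,2}$ are diagonal there; (ii) form the Rayleigh quotient \cref{eq: Rayleigh quotient}; (iii) parametrize $\tangent_{(\matu^*,\matv^*)}\!\tensM$ by the skew and off-diagonal blocks and extremize coordinate-wise, which yields exactly the two-layer $\max$ over $\min$ indexed by $\{(i,j):i\in[p],\,j\in[p+1],\,i\neq j\}$ seen in the statement.

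The inequality $\kappa_\mathrm{new}(\Hess_\mathrm{new}\!f(\matu^*,\matv^*))\le\kappa_\eucmetric(\Hess_\eucmetric\!f(\matu^*,\matv^*))$ is the part requiring care. For $p=1$ the right preconditioners $\matM_{1,2},\matM_{2,2}$ are positive scalars and cancel in \cref{eq: Rayleigh quotient}, so the two condition numbers are equal. For $p\ge 2$, since the Euclidean metric here plays exactly the role of \cref{eq: CCA metric by Shustin}, the bound is precisely \cref{prop: better condition number}: its proof uses \cref{lem: sij} (monotonicity of $\bar v_{ij}(0)$ and $\underline v_{ij}(0)$ in the middle index), a continuity argument reducing the extrema to consecutive pairs $(i,i+1)$, and a first-derivative comparison at $\delta=0$ to break ties, producing a threshold $\bar\delta>0$ below which the inequality holds. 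I expect the only genuine obstacle, should one argue from scratch rather than invoke the CCA lemmas, to be re-carrying that index case analysis; but since \cref{prop: condition number of Shustin,prop: condition number of CCA in new metric,prop: better condition number} are already proved for the more general CCA problem and nowhere use positive definiteness of $\Sigma_{xx},\Sigma_{yy}$ beyond what $\matI$ trivially satisfies, the remaining work is simply to confirm the specialization is legitimate.
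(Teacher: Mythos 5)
Your proposal is correct and matches the paper's own treatment: the paper proves \cref{prop: condition number of SVD} precisely by specializing \cref{prop: condition number of Shustin,prop: condition number of CCA in new metric,prop: better condition number} with $\Sigma_{xx}=\matI$, $\Sigma_{yy}=\matI$, $\Sigma_{xy}=\mata$, and $m\mapsto p$, exactly as you describe. Your additional checks (that the metrics specialize faithfully, the $p=1$ equality case, and the $\bar\delta$ threshold inherited from \cref{prop: better condition number}) are consistent with, and slightly more explicit than, what the paper records.
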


\subsection{RGD and RCG for truncated singular value decomposition}
Let $\tensM$ be endowed with the Riemannian metric~\cref{eq: new metric in SVD}. We apply the Riemannian gradient descent (\cref{alg: RGD}) and Riemannian conjugate gradient (\cref{alg: RCG}) methods to solve the SVD problem~\cref{eq: SVD} in~\cref{alg: RGD SVD,alg: RCG SVD}. Note that the retraction mapping is based on the QR factorization, i.e., $\retr_{(\matu,\matv)}(\eta):=(\qf(\matu+\eta_1),\qf(\matv+\eta_2))$ for $\eta\in\tangent_{(\matu,\matv)}\!\tensM$.
The vector transport in~\cref{alg: RCG SVD} is defined by the projection operator~\cref{eq: projection operator in SVD}.
\begin{algorithm}[htbp]
  \caption{RGD for SVD}\label{alg: RGD SVD}
  \begin{algorithmic}[1]
      \REQUIRE $\tensM$ endowed with a metric~\cref{eq: new metric in SVD}, initial guess $(\matu^{(0)},\matv^{(0)})\in\tensM$, $t=0$. 
      \WHILE{the stopping criteria are not satisfied}
          \STATE Compute $\eta^{(t)}=-\grad_g f(\matu^{(t)},\matu^{(t)})$ by~\cref{eq: SVD gradient g}.
          \STATE Compute the stepsize $s^{(t)}$ by Armijo backtracking line search~\cref{eq: Armijo backtracking line search}. 
          \STATE Update $\matu^{(t+1)}=\qf(\matu^{(t)}+s^{(t)}\eta_1^{(t)})$, $\matv^{(t+1)}=\qf(\matv^{(t)}+s^{(t)}\eta_2^{(t)})$; $t= t+1$.
      \ENDWHILE
      \ENSURE $(\matu^{(t)},\matv^{(t)})\in\tensM$. 
  \end{algorithmic}
\end{algorithm}
\begin{algorithm}[htbp]
  \caption{RCG for SVD}\label{alg: RCG SVD}
  \begin{algorithmic}[1]
      \REQUIRE $\tensM$ endowed with a metric~\cref{eq: new metric in SVD}, initial guess $(\matu^{(0)},\matv^{(0)})\in\tensM$, $t=0$, $\beta^{(0)}=0$. 
      \WHILE{the stopping criteria are not satisfied}
          \STATE Compute $\eta^{(t)}=-\grad_g f(\matu^{(t)},\matu^{(t)})+\beta^{(t)}\Pi_{g,(\matu^{(t)},\matv^{(t)})}(\eta^{(t-1)})$ by~\cref{eq: SVD gradient g}. 
          \STATE Compute the stepsize $s^{(t)}$ by Armijo backtracking line search~\cref{eq: Armijo backtracking line search}. 
          \STATE Update $\matu^{(t+1)}=\qf(\matu^{(t)}+s^{(t)}\eta_1^{(t)})$, $\matv^{(t+1)}=\qf(\matv^{(t)}+s^{(t)}\eta_2^{(t)})$; $t= t+1$.
      \ENDWHILE
      \ENSURE $(\matu^{(t)},\matv^{(t)})\in\tensM$. 
  \end{algorithmic}
\end{algorithm}

\subsection{Numerical validation}
We compare the performance of~\cref{alg: RGD SVD,alg: RCG SVD} with RGD and RCG under the Euclidean metric in~\cite{sato2013riemannian}. The proposed preconditioned metric~\cref{eq: new metric in SVD}, which has a right preconditioning effect, is denoted by ``(R12)''. 
We set $m=1000$, $n=500$, $p=10$, and $\matN=\diag(p,p-1,\dots,1)$. The matrix $\mata$ is constructed by $\mata=\matu^*\Sigma(\matv^*)^\T$, where the entries of $\matu^*\in\mathbb{R}^{m\times p}$ and $\matv^*\in\mathbb{R}^{n\times p}$ are firstly sampled i.i.d. from the uniform distribution on $[0,1]$, and $\matu^*$ and $\matv^*$ are orthogonalized by QR factorization. We set $\Sigma:=\diag(1,\gamma,\gamma^2,\dots,\gamma^{p-1})$ and $\gamma=1/1.5$. The implementation of RGD and RCG is the same as~\cref{sec: CCA}.

Numerical results are shown in~\cref{fig: SVD results,fig: SVD computational cost,tab: SVD results}. We have similar observations as the previous experiments in~\cref{sec: CCA}. First, the proposed methods significantly outperform RGD(E) and RCG(E) with fewer iterations since the proposed metric benefits from the second-order information. Second, computational cost per iteration of~\cref{alg: RGD SVD,alg: RCG SVD} is comparable to RGD(E) and RCG(E) respectively. Third, \cref{tab: SVD results} shows that the subspace distances are smaller than $10^{-6}$ in RGD(R12) and RCG(R12), which indicates that the sequences generated by proposed methods converge to the correct subspace. 

\begin{figure}[htbp]
  \centering
  \subfigure{\includegraphics[width=0.48\textwidth]{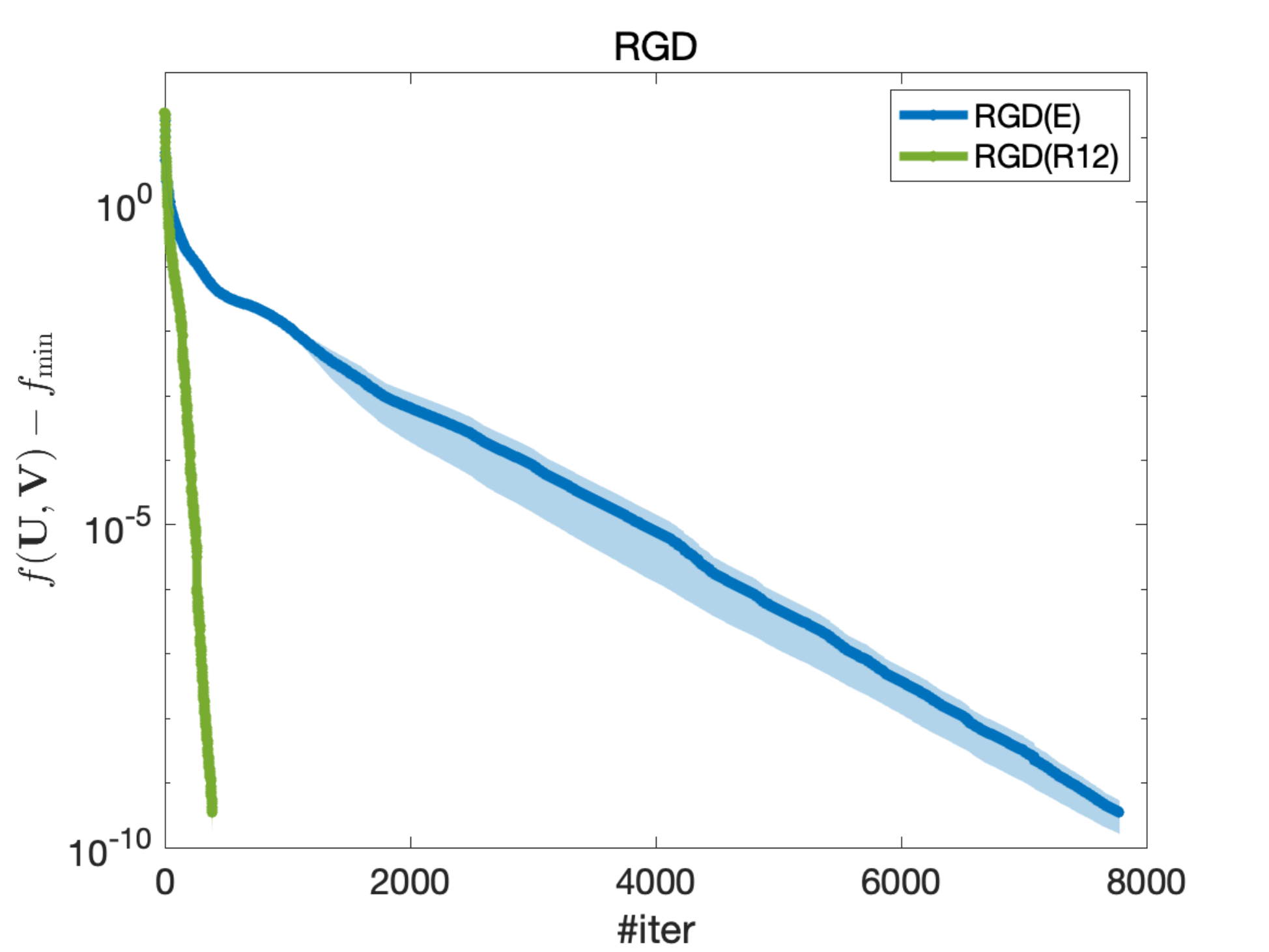}}
  \subfigure{\includegraphics[width=0.48\textwidth]{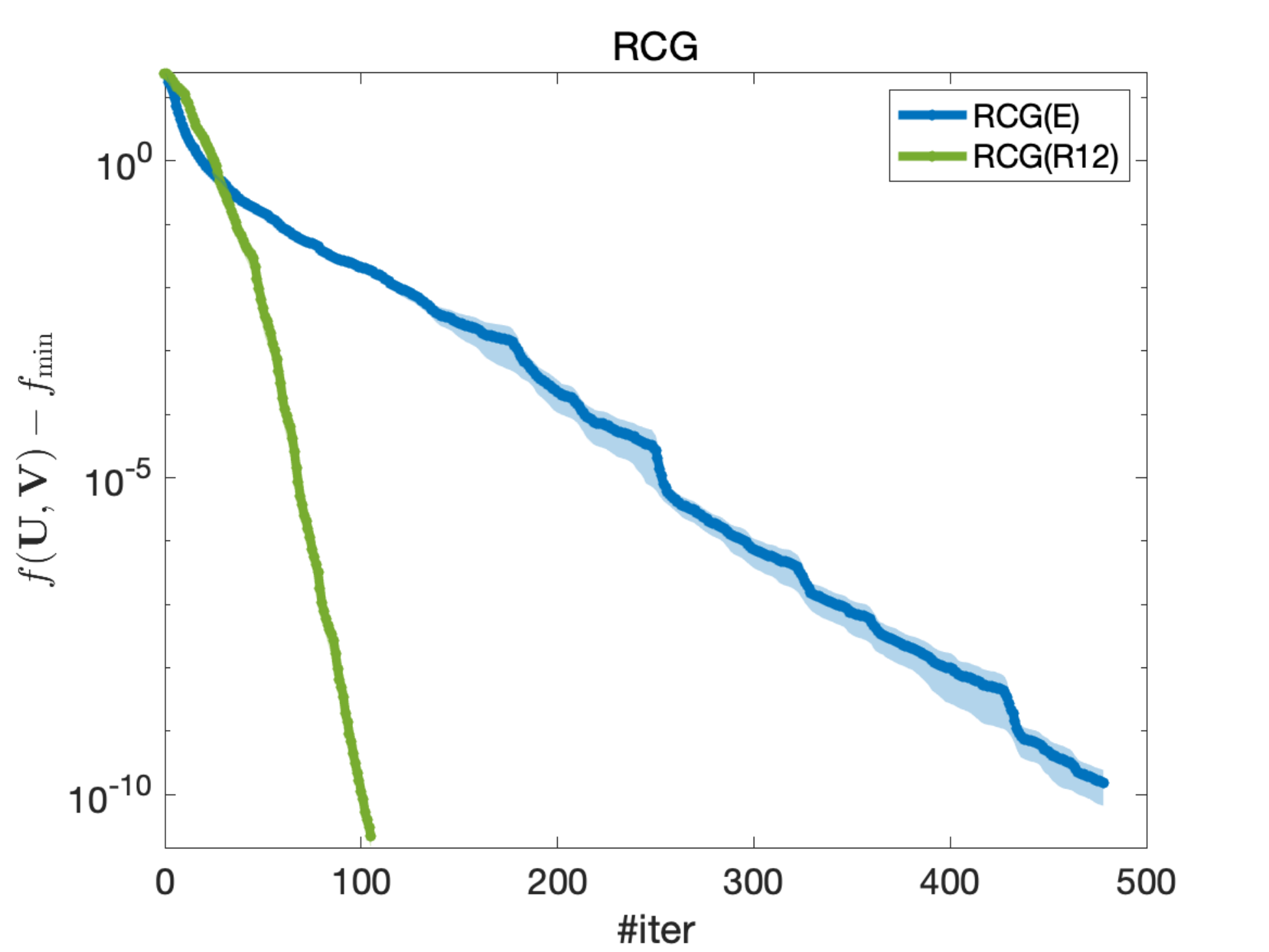}}
  \vspace{-1mm}
  \caption{Numerical results for the SVD problem for $m=1000$, $n=500$, and $p=10$. Left: RGD. Right: RCG. Each method is tested for 10 runs}
  \label{fig: SVD results}
\end{figure}

\begin{figure}[htbp]
  \centering
  \includegraphics[width=0.9\textwidth]{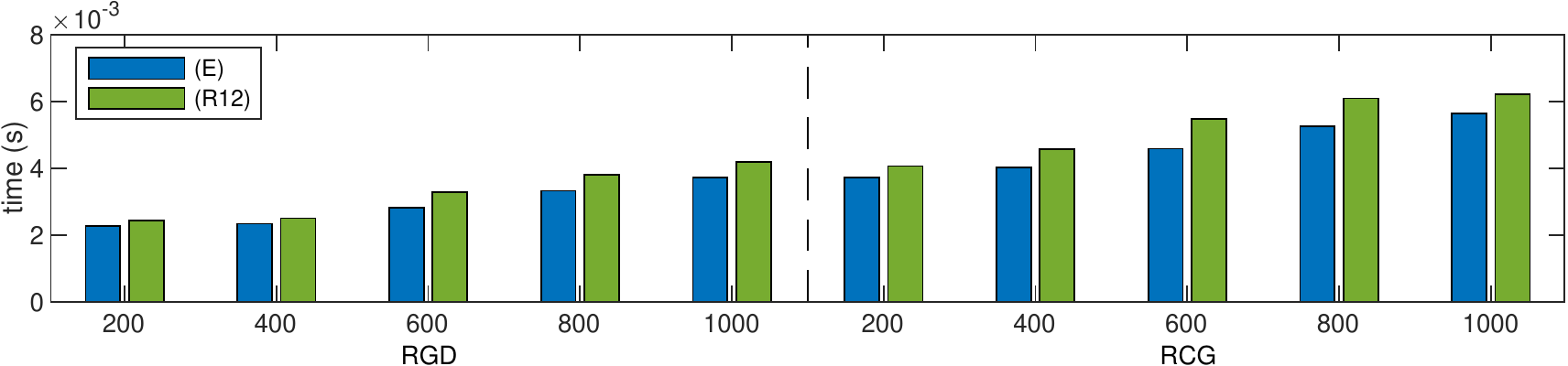}
  \vspace{-1mm}
  \caption{Average computation time per iteration for RGD (left) and RCG (right) under the Euclidean and proposed metric for $m=1000$, $p=10$, and $n=200,400,\dots,1000$}
  \label{fig: SVD computational cost}
\end{figure}

In addition, we compute the condition numbers of $\Hess\!f(\matu^*,\matv^*)$ under two metrics. It follows from the construction of $\mata$ and~\cref{prop: condition number of SVD} that 
\begin{equation*}
  \begin{aligned}
    \kappa(\Hess_{\eucmetric}\!f(\matu^*,\matv^*))&=\frac{(\mu_1+\mu_2)(\gamma+1)}{(\mu_{p-1}-\mu_p)(\gamma^{p-2}-\gamma^{p-1})}=\frac{153389}{63}\approx 2.43\times 10^3,\\
    \kappa(\Hess_{\mathrm{new}}\!f(\matu^*,\matv^*))&=\frac{(\mu_1+\mu_2)(1+\gamma)}{(\mu_1-\mu_2)(1-\gamma)}=95,
  \end{aligned}
\end{equation*}
which exactly coincide with the numerical results in~\cref{tab: SVD results}. Therefore, the lower condition number suggests faster convergence of the proposed methods.

\begin{table}[htbp]
  \centering
  \footnotesize
  \caption{Convergence results of the SVD problem for $m=1000$, $n=500$, and $p=10$}
  \label{tab: SVD results}
  \begin{tabular}{lcrrccccc}
    \toprule
    metric & \multirow{1}*{method} & \multirow{1}*{\#iter} & \multirow{1}*{time (s)} 
    & \multirow{1}*{gnorm} & \multirow{1}*{$D(\matu,\matu^*)$} & \multirow{1}*{$D(\matv,\matv^*)$} & $\kappa_g$\\
    \midrule
    \multirow{2}*{(E)} & RGD & 7781 & 117.29 & 9.64e-07 & 4.53e-05 & 4.53e-05 & \multirow{2}*{2.43e+03}\\
     & RCG & 478 & 5.44 & 8.54e-07 & 2.00e-05 & 2.00e-05 & \\
    \multirow{2}*{(R12)} & RGD & 387 & 3.41 & 8.72e-07 & \textbf{2.38e-15} & \textbf{1.38e-15} & \multirow{2}*{\bf{9.50e+01}} \\
     & \textbf{RCG} & \textbf{105} & \textbf{1.45} & \textbf{7.88e-07} & 3.26e-07 & 3.83e-07 & \\
    \bottomrule
\end{tabular}
\end{table}

\section{Application to matrix and tensor completion}\label{sec: LRMC and LRTC}
In this section, we investigate the matrix and tensor completion problem. Given a partially observed tensor $\tensA\in\mathbb{R}^{n_1\times n_2\times\cdots\times n_d}$ on an index set $\Omega\subseteq[n_1]\times[n_2]\times\cdots\times[n_d]$, the goal of tensor completion is to recover the tensor $\tensA$ from its entries on $\Omega$. Note that it boils down to matrix completion for $d=2$.

There are several different formulations in tensor completion. One type is based on the nuclear norm minimization, e.g.,~\cite{candes2012exact,liu2012tensor}. These methods require working with full-size tensors. Instead, tensor decompositions---which take advantage of the low-rank structure of a tensor---reduce the number of parameters in search space. Therefore, it is economical to formulate the tensor completion problem based on a tensor decomposition, which leads to an optimization problem on a product manifold 
\begin{equation}\label{eq:original problem} 
  \min f(x):=\frac{1}{2p}\left\| \proj_\Omega(\tau(x)-\tensA)\right\|_\mathrm{F}^2,\ 
  \subjectto\ x\in\tensM=\tensM_1\times\tensM_2\times\cdots\times\tensM_K,
\end{equation}
where $p:=|\Omega|/(n_1n_2\cdots n_d)$ is the sampling rate, $\proj_\Omega$ refers to the projection operator onto $\Omega$, i.e, $\proj_\Omega(\tensX)(i_1,\dots,i_d)=\tensX(i_1,\dots,i_d)$ if $(i_1,\dots,i_d)\in\Omega$, otherwise $\proj_\Omega(\tensX)(i_1,\dots,i_d)=0$ for $\tensX\in\mathbb{R}^{n_1\times\cdots\times n_d}$, and $\tau(x)$ denotes the tensor decomposition with components $x_k\in\tensM_k$ for $k\in[K]$ and $x=(x_1,x_2,\dots,x_K)$.

Since computing the Euclidean Hessian $\nabla^2 f(x)$ can be complicated, Kasai and Mishra~\cite{kasai2016low} introduced a preconditioned metric based on the block diagonal approximation of $\nabla^2 f(x)$ for tensor completion in Tucker decomposition. More recently, the idea became prosperous in low-rank tensor approximation and completion for other tensor formats, e.g.,~\cite{breiding2018riemannian,dong2022new,cai2022tensor,gao2024riemannian}, see~\cref{tab: covered existing works} for details. 
In summary, the metric was developed by constructing an operator $\bartensH(x)$ based on the diagonal blocks of the Hessian of the cost function $\phi(x):=\frac12\|\tau(x)-\tensA\|_\mathrm{F}^2$, i.e., 
\[\bartensH(x)[\eta]:=(\partial_{11}^2 \phi(x)[\eta_1],\dots,\partial_{KK}^2 \phi(x)[\eta_K])\quad\text{for } \eta=(\eta_1,\eta_2,\dots,\eta_K)\in\tangent_x\!\tensM,\]
where $\partial_{kk}^2 \phi(x)[\eta_k]:=\lim_{h\to 0}(\partial_k\phi(x_1,\dots,x_{k-1},x_k+h\eta_k,x_{k+1},\dots,x_K)-\partial_k\phi(x))/h$ for $k\in[K]$. Note that this preconditioning approach coincides with the exact block diagonal preconditioning in~\cref{subsec: diagonal approximation}. Alternatively, observing that the cost function $f$ in~\cref{eq:original problem} enjoys a least-squares structure, we can also adopt the Gauss--Newton type preconditioning in~\cref{subsec: Gauss--Newton} to solve~\cref{eq:original problem}.

\subsection{Gauss--Newton method for tensor ring completion}
Since tensor ring decomposition has been shown effective for the tensor completion problem, e.g.,~\cite{gao2024riemannian}, we consider the following tensor ring completion problem
\begin{equation}\label{eq: original TR problem} 
	\begin{array}{cc}
			\min\limits_{\tensU_k\in\mathbb{R}^{r_{k-1}\times n_k\times r_k}}&\ f(\tensU_1,\dots,\tensU_d):=\frac{1}{2p}\left\| \proj_\Omega(\llbracket\tensU_1,\dots,\tensU_d\rrbracket)-\proj_\Omega(\tensA)\right\|_\mathrm{F}^2, 
	\end{array}
\end{equation}
where $\llbracket\tensU_1,\dots,\tensU_d\rrbracket$ denotes the tensor ring decomposition~\cite{zhao2016tensor}. Specifically, given $\tensX=\llbracket\tensU_1,\dots,\tensU_d\rrbracket$ with $\tensU_k\in\mathbb{R}^{r_{k-1}\times n_k\times r_k}$ for $k\in[d]$ and $r_0=r_d$, the $(i_1,i_2,\dots,i_d)$-th element of $\tensX$ is defined by 
\[\tensX(i_1,i_2,\dots,i_d):=\tr(\matU_1(i_1)\matU_2(i_2)\cdots\matU_d(i_d)),\]
where $\matU_k(i_k):=\tensU_k(:,i_k,:)\in\mathbb{R}^{r_{k-1}\times r_k}$ refers the $i_k$-th lateral slice of the tensor $\tensU$ for $i_k\in[n_k]$. Since the $k$-th unfolding matrix of $\tensX$ satisfies $\matx_{(k)}=(\tensU_{k})_{(2)}(\tensU_{\neq k})_{(2)}$, problem~\cref{eq: original TR problem} can be reformulated by introducing~\cite{gao2024riemannian} $\matW_k:=(\tensU_{k})_{(2)}$ and $\matW_{\neq_k}:=(\tensU_{\neq k})_{(2)}$, where $(\tensU_{k})_{(2)}$ and $(\tensU_{\neq k})_{(2)}$ are the 2-nd unfolding matrix of the tensor $\tensU_{k}$ and $\tensU_{\neq k}$ respectively, and $\tensU_{\neq k}\in\mathbb{R}^{r_{k-1}\times\prod_{j\neq k}n_j\times r_{k}}$ is defined by its lateral slice matrices, i.e., $\matU_{\neq k}(1+\sum_{\ell \neq k, \ell = 1}^d(i_\ell-1)J_\ell){:=}(\prod_{j=k+1}^{d}\matU_{j}(i_{j})\prod_{j=1}^{k-1}\matU_{j}(i_{j}))^\T$ with $J_\ell := \prod_{m = 1, m \neq k}^{\ell-1} n_m$. Consequently, a reformulation of~\cref{eq: original TR problem} is given by
\begin{equation}
  \label{eq: tensor ring completion}
  \begin{aligned}
    \min_{\vec{\matW}}\ &\ \ f(\vecmatW):=\frac{1}{2p}\|\proj_{\Omega} (\tau(\vecmatW)-\tensA) \|_\mathrm{F}^2\\
    \subjectto\ &\ \ \vec{\matW}\in\tensM=\mathbb{R}^{n_1\times r_0r_{1}}\times\mathbb{R}^{n_2\times r_1r_{2}}\times\cdots\times\mathbb{R}^{n_d\times r_{d-1}r_{d}},
  \end{aligned}
\end{equation}
where the mapping $\tau$ is defined by $\tau:\vecmatW\mapsto\llbracket\ten_{(2)}(\matW_1),\ten_{(2)}(\matW_2),\dots,\ten_{(2)}(\matW_d)\rrbracket$ and $\ten_{(2)}(\cdot)$ is the second tensorization operator. 

Noticing that the cost function $f$ in~\cref{eq: tensor ring completion} enjoys a least-squares structure: $f(\vecmatW)=\frac12\|F(\vecmatW)\|_\mathrm{F}^2$, where $F(\vecmatW)=\proj_{\Omega} (\tau(\vecmatW)-\tensA)/\sqrt{p}$ is a smooth function, we adopt the Gauss--Newton type preconditioning to solve~\cref{eq: tensor ring completion}. Since the search space $\tensM$ is flat, the RGD method under the metric~\cref{eq: Gauss--Newton metric} is essentially a Euclidean Gauss--Newton method (see~\cite[\S 10.3]{nocedal2006numerical}). We list the Gauss--Newton method in~\cref{alg: RGN}; see~\cref{app: TR-GN} for implementation details. 
\begin{algorithm}[htbp]
  \caption{Gauss--Newton method for tensor ring completion (TR-GN)}\label{alg: RGN}
  \begin{algorithmic}[1]
      \REQUIRE $\tensM$ endowed with a metric $g$, initial guess $\vecmatW^{(0)}\in\tensM$, $t=0$. 
      \WHILE{the stopping criteria are not satisfied}
          \STATE Compute $\eta^{(t)}$ by solving \cref{eq: search direction of GN}. 
          \STATE Update $\vecmatW^{(t+1)}=\vecmatW^{(t)}+\eta^{(t)}$; $t=t+1$.
      \ENDWHILE
      \ENSURE $\vecmatW^{(t)}\in\tensM$. 
  \end{algorithmic}
\end{algorithm}

Since the tensor ring decomposition is complicated, we leave the condition number results analogous to~\cref{prop: condition number of Shustin,prop: condition number of CCA in new metric,prop: condition number of SVD} for future work. Nevertheless, if the sequence generated by the Gauss--Newton method (\cref{alg: RGN}) converges to $\vecmatW^*\in\tensM$ with $F(\vecmatW^*)=0$, the Gauss--Newton method enjoys superlinear convergence, see~\cite[\S 8.4.1]{absil2009optimization} for Riemannian Gauss--Newton method and~\cite[\S 10.3]{nocedal2006numerical} for the Euclidean Gauss--Newton method.

\subsection{Numerical validation}
We compare~\cref{alg: RGN} with the Riemannian gradient descent (TR-RGD) and the Riemannian conjugate gradient (TR-RCG) methods in~\cite{gao2024riemannian} under the metric $g_{\vecmatW}(\xi,\eta):=\sum_{k=1}^d\langle\xi_k,\eta_k(\matW_{\neq k}^\T\matW_{\neq k}^{}+\delta\matI_{r_{k-1}r_k})\rangle$ for $\xi,\eta\in\tangent_{\vecmatW}\!\tensM$, where $\delta>0$ is a constant. The codes for TR-RGD, TR-RCG, and TR-GN methods are available at \url{https://github.com/JimmyPeng1998/LRTCTR}.

The tensor $\tensA\in\mathbb{R}^{n_1\times n_2\times \cdots \times n_d}$ is constructed by $\tensA=\tau(\vecmatW^*)$ and each entry of $\vecmatW^*\in\tensM$ is uniformly sampled from $[0,1]$. The initial guess $\vecmatW^{(0)}\in\tensM$ is generated in a same fashion. Given the sampling rate $p$, we randomly select $pn_1n_2\cdots n_d$ samples from $[n_1]\times[n_2]\times\cdots\times[n_d]$ to formulate the sampling set $\Omega$. We set $d=3$, $n_1=n_2=n_3=100$, $p=0.05$, TR ranks $\vecr^*=(1,1,1),(2,2,2),\dots,(8,8,8)$, and $\delta=10^{-15}$.

We specify the default settings of all methods. 
The stepsize rule for TR-RGD method and the TR-RCG method is the Armijo backtracking line search~\cref{eq: Armijo backtracking line search}. The conjugate gradient parameter is set to be the Riemannian version~\cite{boumal2014manopt} of the modified Hestenes--Stiefel rule. The parameters in~\cref{eq: Armijo backtracking line search} are $\rho=0.3$, $a=2^{-13}$, and $s_0=1$. 
The performance of each method is evaluated by the training error $\varepsilon_{\Omega}(\vecmatW^{(t)}):=\|\proj_{\Omega}(\tau(\vecmatW^{(t)}))-\proj_{\Omega}(\tensA)\|_\mathrm{F}/\|\proj_{\Omega}(\tensA)\|_\mathrm{F}$ and the test error $\varepsilon_{\Gamma}(\vecmatW^{(t)})$, where $\Gamma$ is a test set different from $\Omega$ and we set $|\Gamma|=100$. A method is terminated if one of the following stopping criteria is achieved: 1) training error $\varepsilon_{\Omega}(\vecmatW^{(t)})<10^{-14}$; 2) the maximum iteration $1000$; 3) the relative change $\lvert{(\varepsilon_{\Omega}(\vecmatW^{(t)})-\varepsilon_{\Omega}(\vecmatW^{(t-1)}))}/{\varepsilon_{\Omega}(\vecmatW^{(t-1)})}\rvert<\varepsilon$; 4) the stepsize $s^{(t)}<10^{-10}$.

Numerical results are illustrated in~\cref{fig: numerical results for LRTC,fig: time required for LRTC}. On the one hand, we observe that the TR-GN method has faster convergence than TR-RGD and TR-RCG since TR-GN exploits more second-order information of $\nabla^2 f(\vecmatW)$, while the preconditioned metric in TR-RGD and TR-RCG only takes advantage of its diagonal blocks. On the other hand,~\cref{fig: time required for LRTC} suggests that the computation time for TR-GN to reach the stopping criteria grows faster than TR-RGD and TR-RCG as TR rank $\vecr^*$ increases. In other words, there is a trade-off between exploiting second-order information and the computational efficiency.

\begin{figure}[htbp]
  \centering
  \subfigure{\includegraphics[width=0.48\textwidth]{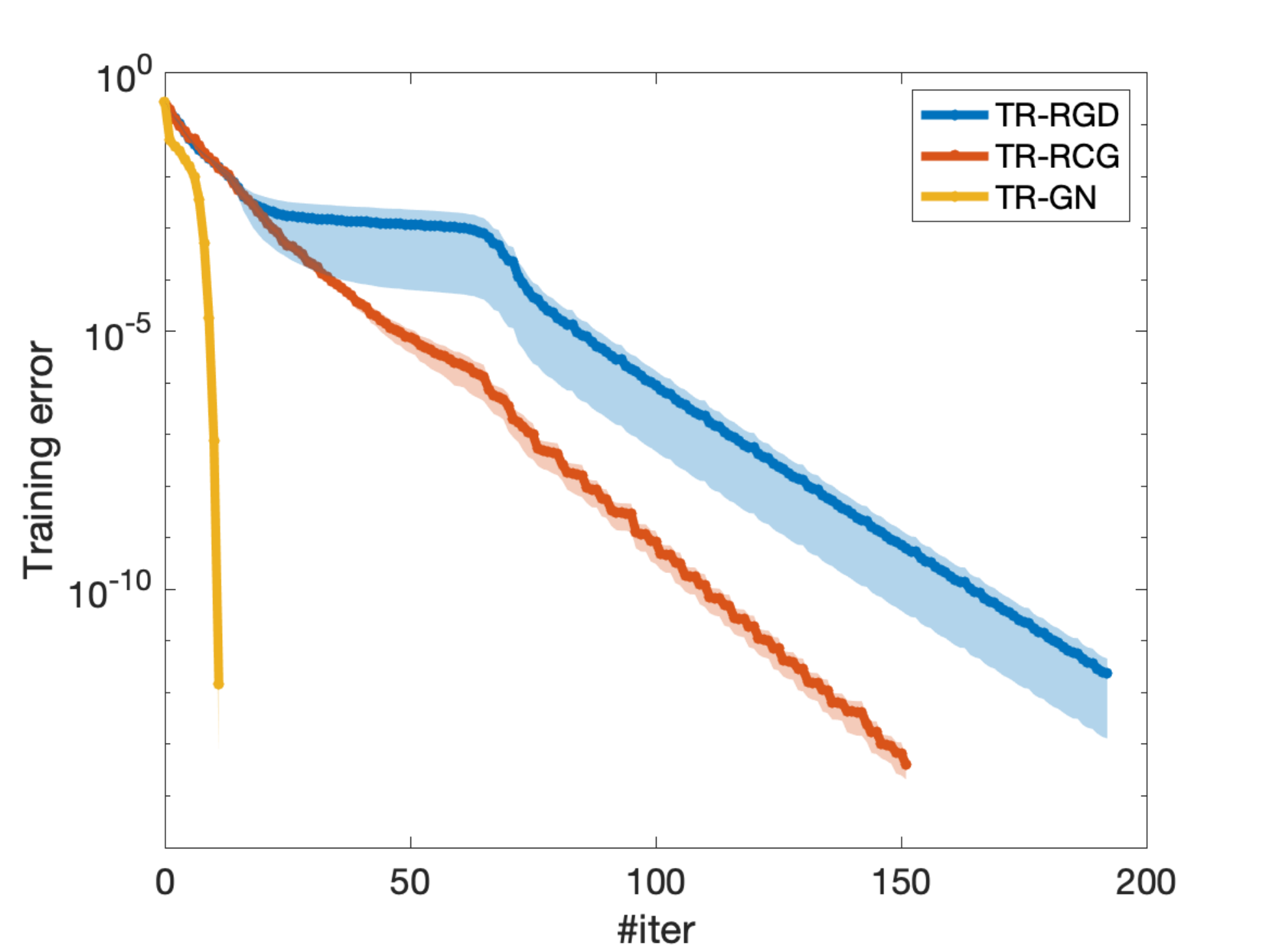}}
  \subfigure{\includegraphics[width=0.48\textwidth]{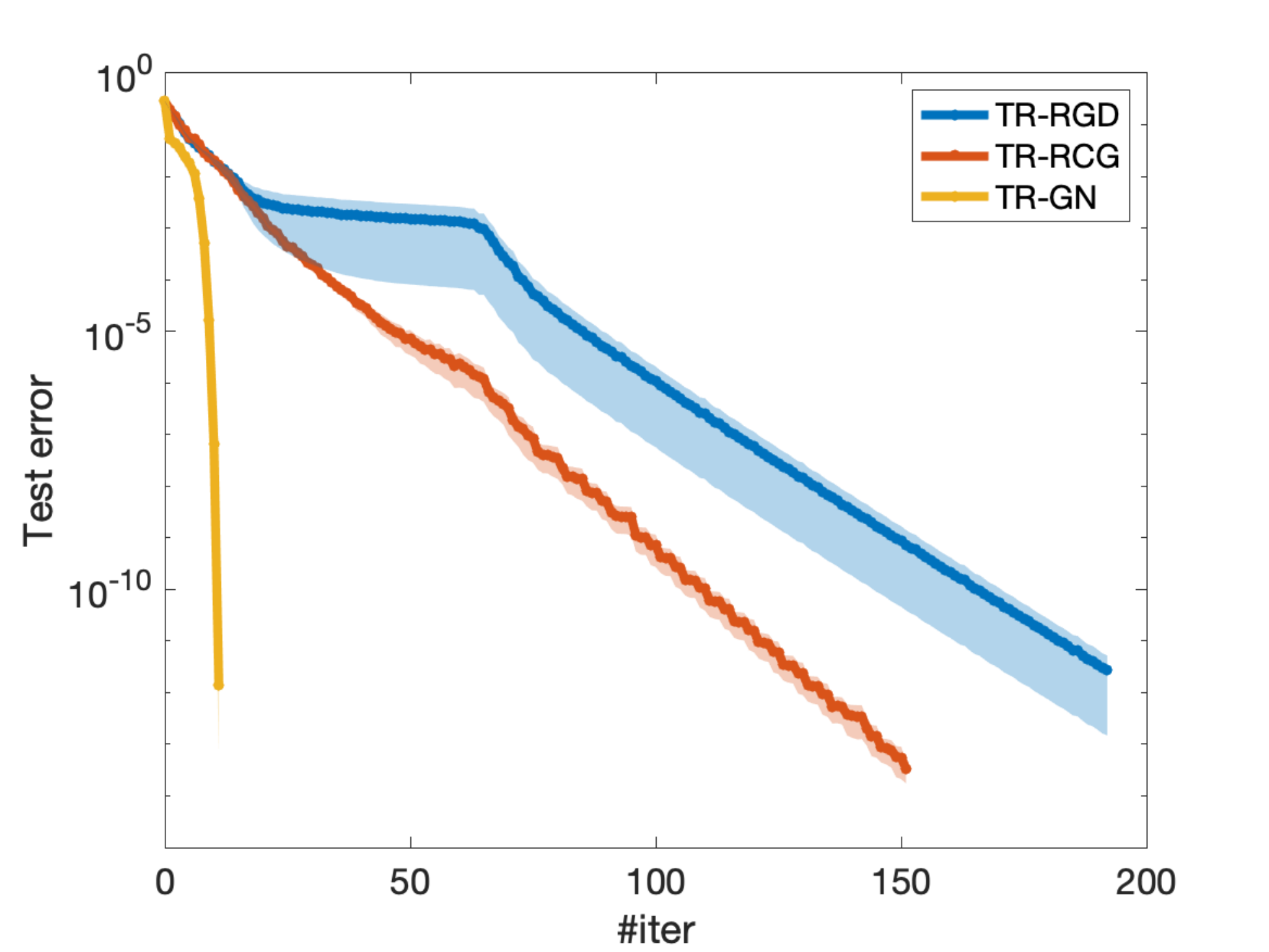}}
    \vspace{-1mm}
  \caption{Training and test errors for TR rank $\vecr^*=(5,5,5)$. Each method is tested for 10 runs}
  \label{fig: numerical results for LRTC}
\end{figure}

\begin{figure}[htbp]
  \centering
  \includegraphics[width=0.9\textwidth]{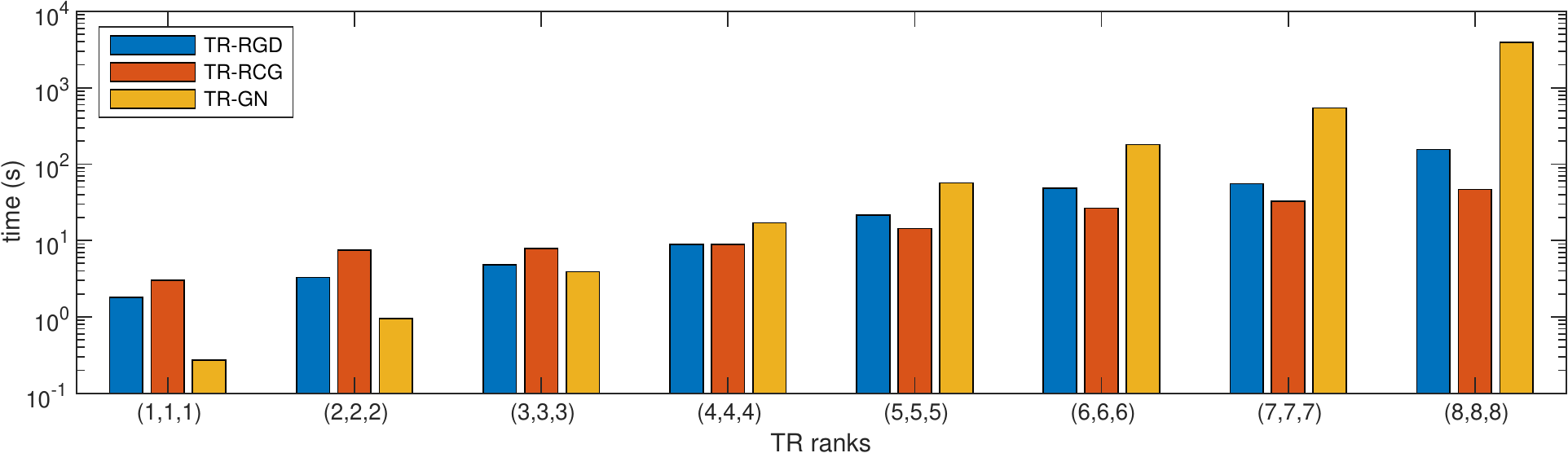}
    \vspace{-1mm}
  \caption{Computation time required for each method to reach the stopping criteria for TR rank $\vecr^*=(1,1,1),(2,2,2),\dots,(8,8,8)$}
  \label{fig: time required for LRTC}
\end{figure}

\section{Conclusions and future works} 
\label{sec: conclusions}
The performance of the Riemannian methods varies with different metrics. We have shown that an exquisitely constructed metric is indeed helpful to accelerate the Riemannian methods. Specifically, we have proposed a general framework for optimization on product manifolds endowed with a preconditioned metric and three specific approaches to construct an operator that aims to approximate the Riemannian Hessian. Conceptually, various existing methods including the Riemannian Gauss--Newton method and the block-Jacobi preconditioning in numerical linear algebra can be interpreted by the proposed framework with specific metrics. We have tailored novel preconditioned metrics based on the proposed framework for canonical correlation analysis and truncated singular value decomposition and have shown the effect of the proposed metric by computing the condition number of the Riemannian Hessian at the local minimizer, which indeed improves the condition numbers. Numerical results verify that a delicate metric does improve the performance of the Riemannian optimization methods. 

In the future, we intend to specify the proposed framework to other problems. Moreover, taking into account the block structure of product manifolds, parallel computing is capable of further accelerating the Riemannian optimization methods.

\section*{Acknowledgments}
We would like to thank the editor and two anonymous reviewers for insightful comments.

\appendix

\section{Computational details in~\cref{eg: eg1}} \label{app: eg 1.1}
In~\cref{eg: eg1}, we investigate a class of preconditioned metrics defined by 
\[g_{\lambda,\vecx}(\xi,\eta)=\langle\xi,\matb_\lambda\eta\rangle\quad\text{for }\xi,\eta\in\tangent_\vecx\!\tensM_\matb,\]
where $\lambda\in\mathbb{R}$ such that the matrix $\matb_\lambda:=\lambda\matI_n+(1-\lambda)\matB$ is positive definite. The Riemannian gradients at $x\in\tensM_\matb$ under these metrics are
  \begin{equation*}
    \grad_{g_{\lambda}} f(x)=-(\matI_n-\frac{\matb_\lambda^{-1}\matb\vecx\vecx^\T\matb}{\vecx^\T\matb\matb_\lambda^{-1}\matb\vecx})\matb_\lambda^{-1}\vecb=-\matb_\lambda^{-1}\vecb+\frac{\vecx^\T\matb\matb_\lambda^{-1}\vecb}{\vecx^\T\matb\matb_\lambda^{-1}\matb\vecx}\matb_\lambda^{-1}\matb\vecx,
  \end{equation*}
  by following~\cref{prop: orthogonal projection operator of new metric}. Subsequently, the update rule for RGD under the metric $g_{\lambda}$ is  $\vecx^{(t+1)}={\bar{\vecx}^{(t)}}/{\|\bar{\vecx}^{(t)}\|_\matB}$ with $\bar{\vecx}^{(t)}=\vecx^{(t)}-s^{(t)}\grad_{g_{\lambda}} f(\vecx^{(t)})$, where we adopt the polar retraction~\cite[(3.3)]{shustin2023riemannian}.
  The Riemannian Hessian of $f$ at $\vecx^*$ along $\eta\in\tangent_{\vecx^*}\!\tensM_\matb$ can be computed by
  \begin{equation*}
  	\begin{aligned}
  		\Hess_{g_{\lambda}}\!f(\vecx^*)[\eta]&=\Pi_{g_{\lambda},\vecx^*}(\mathrm{D}\grad_{g_{\lambda}} f(\vecx^*)[\eta])=\Pi_{g_{\lambda},\vecx^*}(\|\matb^{-1}\vecb\|_\matb{\matb_\lambda}^{-1}\matb\eta),
  	\end{aligned}
  \end{equation*}
  since $\grad_{g_{\lambda}} f(\vecx^*)=\Pi_{g_{\lambda},\vecx^*}(-\matb_\lambda^{-1}\vecb)=0$. The Rayleigh quotient~\cref{eq: Rayleigh quotient} is given by 
  \[q(\eta)=\|\matb^{-1}\vecb\|_\matb\cdot\frac{\langle\eta,\matb\eta\rangle}{\langle\eta,{\matb_\lambda}\eta\rangle}\quad \text{for }\eta\in\tangent_{\vecx^*}\!\tensM_\matb.\]
  Consequently, we can compute the condition number of $\Hess_{g_{\lambda}}\!f(\vecx^*)$ in a same fashion as~\cref{prop: condition number of CCA in new metric}. Note that if $\lambda=0$, the Rayleigh quotient boils down to a constant $\|\matb^{-1}\vecb\|_\matb$ and thus $\kappa_{g_{0}}(\Hess_{g_{0}}\!f(\vecx^*))=1$.

\section{Proof of~\cref{prop: diff metric}}\label{app: proof of diff metric}
\begin{proof}
  It suffices to prove the first inequality, and the other can be proved in a same fashion. Let $(\tensU,\varphi)$ be a chart of the manifold $\tensM$, and $E_i$ be the $i$-th coordinate vector field. For vector fields $\zeta=\sum_i\alpha_iE_i$ and $\chi=\sum_i\beta_iE_i$, it follows the definition of the Riemannian metric $g$ that $g_x(\zeta_x,\chi_x)=\sum_{i,j}g_{ij}\alpha_i\beta_j=\hat{\zeta}_{\hat{x}}^\T\matG_{\hat{x}}\hat{\chi}_{\hat{x}}$,
  where $\hat{x}:=\varphi(x)$, $\hat{\zeta}_{\hat{x}}:=\mathrm{D}\varphi(\varphi^{-1}(\hat{x}))[\zeta_x]$, $\hat{\chi}_{\hat{x}}:=\mathrm{D}\varphi(\varphi^{-1}(\hat{x}))[\chi_x]$, and the $(i,j)$-th element of $\matG_{\hat{x}}$ is $g_{ij}:=g(E_i,E_j)$. 
  Denote $\zeta_x:=\grad_{g} f(x)$ and $\chi_x:=\grad_{\tilde{g}} f(x)$. It follows from the coordinate expression~\cite[\S 3.6]{absil2009optimization} that $\hat{\zeta}_{\hat{x}}=\matG^{-1}_{\hat{x}}\nabla\hat{f}(\hat{x})$ and $\hat{\chi}_{\hat{x}}=\tilde{\matG}_{\hat{x}}^{-1}\nabla\hat{f}(\hat{x})$,
  where $\hat{f}(\hat{x}):=f\circ\varphi^{-1}(\hat{x})$ and $\nabla\hat{f}$ refers to the Euclidean gradient of $\hat{f}$. We obtain that
  \[
  g_{x}^{}(\grad_{g} f(x),\grad_{\tilde{g}} f(x))=\hat{\zeta}_{\hat{x}}^\T\matG_{\hat{x}}^{}\hat{\chi}_{\hat{x}}^{}=(\nabla\hat{f}(\hat{x}))^\T\tilde{\matG}_{\hat{x}}^{-1}\nabla\hat{f}(\hat{x})\geq 0.
  \]
  The equality holds if and only if $\nabla\hat{f}(\hat{x})=0$, i.e., $\grad_{g} f(x)=\grad_{\tilde{g}} f(x)=0$. Moreover, if $\grad_g f(x)=0$, it follows that $\hat{\zeta}_{\hat{x}}=0$, and hence $\hat{\chi}_{\hat{x}}=\tilde{\matG}_{\hat{x}}^{-1}\nabla\hat{f}(\hat{x})=\tilde{\matG}_{\hat{x}}^{-1}\matG_{\hat{x}}^{}\matG^{-1}_{\hat{x}}\nabla\hat{f}(\hat{x})=\tilde{\matG}_{\hat{x}}^{-1}\matG_{\hat{x}}^{}\hat{\zeta}_{\hat{x}}^{}=0$, i.e., $\grad_{\tilde{g}} f(x)=0$.
\end{proof}

\section{Proof of~\cref{prop: condition number of Shustin}}\label{app: proof}
\cref{prop: condition number of Shustin} gives the condition number of the Riemannian Hessian of $f$ at $(\matu^*,\matv^*)$ under the metric~\cref{eq: CCA metric by Shustin}. A concrete proof of~\cref{prop: condition number of Shustin} is given as follows.
  \begin{proof} 
    Since $(\matu^*,\matv^*)$ is a critical point of $f$, it follows from $(\matu^*)^\T\Sigma_{xy}\matv^*=\Sigma$ and $\grad_g f(\matu^*,\matv^*)=0$ that
    \begin{equation}\label{eq: CCA stationarity}
      \Sigma_{xx}^{-1}\Sigma_{xy}^{}\matv^*=\matu^*\Sigma\quad\text{and}\quad\Sigma_{yy}^{-1}\Sigma_{xy}^\T\matu^*=\matv^*\Sigma,
    \end{equation}
    where $\Sigma=\diag(\sigma_1,\sigma_2,\dots,\sigma_m)$ and $\sigma_1>\sigma_2>\cdots>\sigma_m$ are the $m$ largest singular values of $\Sigma_{xx}^{-1/2}\Sigma_{xy}\Sigma_{yy}^{-1/2}$.
    Therefore, it follows from~\cref{eq: tangent space of gen St,eq: Riemannian Hessian CCA Shustin,eq: CCA stationarity} that
    \begin{equation*}
      g_{(\matu^*,\matv^*)}(\eta,\Hess_g\!f(\matu^*,\matv^*)[\eta])=\langle\eta_1,\Sigma_{xx}\eta_1\Sigma\matN\rangle-2\langle\eta_1,\Sigma_{xy}\eta_2\matN\rangle+\langle\eta_2,\Sigma_{yy}\eta_2\Sigma\matN\rangle
    \end{equation*}
    for $\eta=(\eta_1,\eta_2)\in\tangent_{(\matu^*,\matv^*)}\!\tensM$.

    The goal of the proof is to compute the condition number of $\Hess_g\!f(\matu^*,\matv^*)$. To this end, we calculate the Rayleigh quotient of $\Hess_g\!f(\matu^*,\matv^*)$ by~\cref{eq: Rayleigh quotient} and evaluate its upper and lower bounds. First, the Rayleigh quotient reads
      \begin{align}
        q(\eta)
        &=\frac{\langle\eta_1,\Sigma_{xx}\eta_1\Sigma\matN\rangle-2\langle\eta_1,\Sigma_{xy}\eta_2\matN\rangle+\langle\eta_2,\Sigma_{yy}\eta_2\Sigma\matN\rangle}{\langle\eta_1,\Sigma_{xx}\eta_1\rangle+\langle\eta_2,\Sigma_{yy}\eta_2\rangle}\nonumber\\
        &=\frac{\langle\tilde{\eta}_1,\tilde{\eta}_1\Sigma\matN\rangle-2\langle\tilde{\eta}_1,\Sigma_{xx}^{-1/2}\Sigma_{xy}\Sigma_{yy}^{-1/2}\tilde{\eta}_2\matN\rangle+\langle\tilde{\eta}_2,\tilde{\eta}_2\Sigma\matN\rangle}{\langle\tilde{\eta}_1,\tilde{\eta}_1\rangle+\langle\tilde{\eta}_2,\tilde{\eta}_2\rangle}\nonumber\\
        &=\frac{
        \begin{bmatrix}
            \rmvec(\tilde{\eta}_1)^\T & \rmvec(\tilde{\eta}_2)^\T
        \end{bmatrix}
        \begin{bmatrix}
            \Sigma\matN\otimes\matI_{d_x} & -\matN\otimes\matM\\
            -\matN\otimes\matM^\T & \Sigma\matN\otimes\matI_{d_y}
        \end{bmatrix}
        \begin{bmatrix}
            \rmvec(\tilde{\eta}_1)\\
            \rmvec(\tilde{\eta}_2)
        \end{bmatrix}
        }{\langle\tilde{\eta}_1,\tilde{\eta}_1\rangle+\langle\tilde{\eta}_2,\tilde{\eta}_2\rangle}\nonumber\\
        &=\frac{\sum\limits_{i=1}^m\mu_i
        \begin{bmatrix}
            (\tilde{\eta}_1(:,i))^\T & (\tilde{\eta}_2(:,i))^\T
        \end{bmatrix}
        \begin{bmatrix}
            \sigma_i\matI_{d_x} & -\matM\\
            -\matM^\T & \sigma_i\matI_{d_y}
        \end{bmatrix}
        \begin{bmatrix}
            \tilde{\eta}_1(:,i)\\
            \tilde{\eta}_2(:,i)
        \end{bmatrix}
        }{\langle\tilde{\eta}_1,\tilde{\eta}_1\rangle+\langle\tilde{\eta}_2,\tilde{\eta}_2\rangle},\label{eq: Rayleigh quotient of CCA}
      \end{align}
    where $\matM=\Sigma_{xx}^{-1/2}\Sigma_{xy}\Sigma_{yy}^{-1/2}$, $\tilde{\eta}_1=\Sigma_{xx}^{1/2}\eta_1$ and $\tilde{\eta}_2=\Sigma_{yy}^{1/2}\eta_2$. By using~\cref{eq: tangent space of gen St},  we can represent $\tilde{\eta}$ by 
    \begin{align}
        \tilde{\eta}&=(\Sigma_{xx}^{\frac12}\eta_1,\Sigma_{yy}^{\frac12}\eta_2)=(\Sigma_{xx}^{\frac12}\matu^*\matOmega_1+\Sigma_{xx}^{\frac12}\matu_{\Sigma_{xx}\perp}^*\matK_1,\Sigma_{yy}^{\frac12}\matv^*\matOmega_2+\Sigma_{yy}^{\frac12}\matv_{\Sigma_{yy}\perp}^*\matK_2)\nonumber\\
        &=(\bar{\matu}\matOmega_1+\bar{\matu}_\perp\matK_1,\bar{\matv}\matOmega_2+\bar{\matv}_\perp\matK_2),\label{eq: tilde eta}
    \end{align} 
    where $\bar{\matu}=\Sigma_{xx}^{\frac12}\matu^*\in\St(m,d_x)$, $\bar{\matv}=\Sigma_{yy}^{\frac12}\matv^*\in\St(m,d_y)$, $\bar{\matu}_\perp=\matu_{\Sigma_{xx}\perp}^*\in\St(d_x-m,d_x)$ and $\bar{\matv}_\perp=\Sigma_{yy}^{\frac12}\matv_{\Sigma_{yy}\perp}^*\in\St(d_y-m,d_y)$ satisfy $\bar{\matu}^\T\bar{\matu}_\perp=0$, $\bar{\matv}^\T\bar{\matv}_\perp=0$ and $\matM=[\bar{\matu}\ \bar{\matu}_\perp]\diag(\sigma_1,\dots,\sigma_r,0,\dots,0)[\bar{\matv}\ \bar{\matv}_\perp]^\T$ with $r=\rank(\matM)$ by~\cref{eq: cca solution}. Taking~\cref{eq: tilde eta} into~\cref{eq: Rayleigh quotient of CCA}, we obtain that
    \begin{align}
            q(\eta) & =\dfrac{\sum\limits_{i=1}^m\mu_i
            \begin{bmatrix}
              [\bar{\matu}\ \bar{\matu}_\perp]\bar{\matOmega}_1(:,i)\\ 
              [\bar{\matv}\ \bar{\matv}_\perp]\bar{\matOmega}_2(:,i)
            \end{bmatrix}^\T
            \begin{bmatrix}
                \sigma_i\matI_{d_x} \ -\matM\\
                -\matM^\T \ \sigma_i\matI_{d_y}
            \end{bmatrix}
            \begin{bmatrix}
              [\bar{\matu}\ \bar{\matu}_\perp]\bar{\matOmega}_1(:,i)\\ 
              [\bar{\matv}\ \bar{\matv}_\perp]\bar{\matOmega}_2(:,i)
            \end{bmatrix}}
            {\langle\tilde{\eta}_1,\tilde{\eta}_1\rangle+\langle\tilde{\eta}_2,\tilde{\eta}_2\rangle}\nonumber \\
            & =\dfrac{\sum\limits_{i=1}^m\mu_i\Big(-\sum\limits_{j=1}^r 2\sigma_j\bar{\matOmega}_1(j,i)\bar{\matOmega}_2(j,i)+\sum\limits_{j=1}^{d_x}\sigma_i\bar{\matOmega}_1(j,i)^2+\sum\limits_{j=1}^{d_y}\sigma_i\bar{\matOmega}_2(j,i)^2\Big)}{\|\bar{\matOmega}_1\|_\frob^2+\|\bar{\matOmega}_2\|_\frob^2},  \label{eq: Step 1} 
    \end{align}
    where $\bar{\matOmega}_\ell:=\begin{bmatrix}
      \matOmega_\ell\\
      \matK_\ell
    \end{bmatrix}$ for $\ell=1,2$.

    Subsequently, by using $\bar{\matOmega}_\ell(j,i)^2=\bar{\matOmega}_\ell(i,j)^2$ for $\ell=1,2$, $i,j\in[m]$, we regroup the terms $\bar{\matOmega}_\ell(j,i)^2$ in~\cref{eq: Step 1} and yield
    \begin{equation}\label{eq: upper bound}
        \begin{aligned}
            q(\eta) & \leq\dfrac{\sum\limits_{i=1}^m\mu_i\Big(\sum\limits_{j=1}^r\sigma_j(\bar{\matOmega}_1(j,i)^2+\bar{\matOmega}_2(j,i)^2)+\sum\limits_{j=1}^{d_x}\sigma_i\bar{\matOmega}_1(j,i)^2+\sum\limits_{j=1}^{d_y}\sigma_i\bar{\matOmega}_2(j,i)^2\Big)}{\|\bar{\matOmega}_1\|_\frob^2+\|\bar{\matOmega}_2\|_\frob^2}\\
            &=\dfrac{\sum\limits_{i=1}^m\Big(\sum\limits_{j=1}^r\bar{s}_{ij}(\bar{\matOmega}_1(j,i)^2+\bar{\matOmega}_2(j,i)^2)+\sum\limits_{j=r+1}^{d_x}\mu_i\sigma_i\bar{\matOmega}_1(j,i)^2+\sum\limits_{j=r+1}^{d_y}\mu_i\sigma_i\bar{\matOmega}_2(j,i)^2\Big)}{\|\bar{\matOmega}_1\|_\frob^2+\|\bar{\matOmega}_2\|_\frob^2}\\
            &\leq\max\left\{(\mu_1+\mu_2)(\sigma_1+\sigma_2)/2,\mu_1(\sigma_1+\sigma_{m+1})\right\},
        \end{aligned}
    \end{equation}
    where $\bar{s}_{ij}:=\begin{cases}
      (\mu_i+\mu_j)(\sigma_i+\sigma_j)/2,\ j=1,2,\dots,m;\\
      \mu_i(\sigma_i+\sigma_j),\ j=m+1,m+2,\dots,r
    \end{cases}$ for $i=1,2,\dots,m$. The equality holds if and only if: 1) $\bar{\matOmega}_1(j,i)=-\bar{\matOmega}_2(j,i)$ for all $i\in[m],j\in[r]$; 2) $\bar{\matOmega}_1(j,i)^2=0$ for all $i\in[m],j=r+1,r+2,\dots,d_x$; 3) $\bar{\matOmega}_2(j,i)^2=0$ for all $i\in[m],j=r+1,r+2,\dots,d_y$; 4) $\bar{\matOmega}_1(j,i)=\bar{\matOmega}_2(j,i)=0$ for $(i,j)\neq(i^*,j^*)$, where $(i^*,j^*)\in\argmax_{i\in[m],j\in[r],i\neq j}\bar{s}_{ij}\subseteq\{(1,2),(2,1),(1,m+1)\}$.

    Additionally, we compute the lower bound of Rayleigh quotient in a same fashion as~\cref{eq: upper bound} and yield
    \begin{equation}\label{eq: lower bound}
        \begin{aligned}
            q(\eta) & \geq\dfrac{\sum\limits_{i=1}^m\mu_i\Big(-\sum\limits_{j=1}^r\sigma_j(\bar{\matOmega}_1(j,i)^2+\bar{\matOmega}_2(j,i)^2)+\sum\limits_{j=1}^{d_x}\sigma_i\bar{\matOmega}_1(j,i)^2+\sum\limits_{j=1}^{d_y}\sigma_i\bar{\matOmega}_2(j,i)^2\Big)}{\|\bar{\matOmega}_1\|_\frob^2+\|\bar{\matOmega}_2\|_\frob^2}\\
            &=\dfrac{\sum\limits_{i=1}^m\Big(\sum\limits_{j=1}^r\underline{s}_{ij}(\bar{\matOmega}_1(j,i)^2+\bar{\matOmega}_2(j,i)^2)+\sum\limits_{j=r+1}^{d_x}\mu_i\sigma_i\bar{\matOmega}_1(j,i)^2+\sum\limits_{j=r+1}^{d_y}\mu_i\sigma_i\bar{\matOmega}_2(j,i)^2\Big)}{\|\bar{\matOmega}_1\|_\frob^2+\|\bar{\matOmega}_2\|_\frob^2}\\
             & \geq\min\{\min_{i,j\in[m],i\neq j}(\mu_i-\mu_j)(\sigma_i-\sigma_j)/2,\mu_m(\sigma_m-\sigma_{m+1})\},
        \end{aligned}
    \end{equation}
    where $\underline{s}_{ij}:=\begin{cases}
      (\mu_i-\mu_j)(\sigma_i-\sigma_j)/2,\ j=1,2,\dots,m;\\
      \mu_i(\sigma_i-\sigma_j),\ j=m+1,m+2,\dots,r
    \end{cases}$ for $i=1,2,\dots,m$. The equality holds if and only if: 1) $\bar{\matOmega}_1(j,i)=\bar{\matOmega}_2(j,i)$ for all $i\in[m],j\in[r]$; 2) $\bar{\matOmega}_1(j,i)^2=0$ for all $i\in[m],j=r+1,\dots,d_x$; 3) $\bar{\matOmega}_2(j,i)^2=0$ for all $i\in[m],j=r+1,\dots,d_y$; 4) $\bar{\matOmega}_1(j,i)=\bar{\matOmega}_2(j,i)=0$ for $(i,j)\neq(i^*,j^*)$, where $(i^*,j^*)\in\argmin_{i\in[m],j\in[r],i\neq j}\underline{s}_{ij}$.
    
    Since the inequalities in~\cref{eq: upper bound,eq: lower bound} are tight, the proof is completed.
    \end{proof}

\section{Proof of~\cref{prop: orthogonal projection operator of new metric}}\label{app: proof of orth proj}
\begin{proof}
It suffices to prove $\Pi_{\mathrm{new},\matu}(\bareta_1)=\bareta_1-\matu\matS_1^{}\matM_{1,2}^{-1}$, and the others can be obtained in a same fashion. Recall the tangent space of $\tensM_1$ in~\cref{eq: tangent space of gen St}. The orthogonal complement with regard to the metric~\cref{eq: CCA metric new} of the tangent space $\tangent_{\matU}\!\tensM_1$ can be characterized by
\begin{equation}
  \label{eq: normal space to new metric}
  (\tangent_{\matU}\!\tensM_1)^\perp=\{\matU\matS_1^{}\matM_{1,2}^{-1}:\matS_1\in\mathbb{R}^{m\times m},\ \matS_1^{}=\matS_1^\T\},
\end{equation}
since the dimension of $\{\matU\matS_1^{}\matM_{1,2}^{-1}:\matS_1\in\mathbb{R}^{m\times m},\ \matS_1=\matS_1^\T\}$ is $m(m+1)/2$ and $\tr((\matU\matS_1^{}\matM_{1,2}^{-1})^\T\Sigma_{xx}(\matu\matOmega_1+\matu_{\Sigma_{xx}\perp}\matK_1)\matM_{1,2})=0$ holds for all $\matS_1,\matOmega_1,\matK_1$ satisfying that $\matS_1=\matS_1^\T$ and $\matOmega_1=-\matOmega_1^\T$. Moreover, in the light of $\tangent_{\matU}\!\tensM_1\oplus(\tangent_{\matU}\!\tensM_1)^\perp=\tangent_{\matU}\!\mathbb{R}^{d_x\times m}\simeq\mathbb{R}^{d_x\times m}$, there is a unique orthogonal decomposition for $\bar{\eta}_1\in\mathbb{R}^{d_x\times m}$
\begin{equation}
  \bareta_1=\Pi_{\mathrm{new},\matu}^{}(\bareta_1)+\Pi_{\mathrm{new},\matu}^\perp(\bareta_1)=(\matu\matOmega_1+\matu_{\Sigma_{xx}\perp}\matK_1)+\matU\matS_1^{}\matM_{1,2}^{-1},\label{eq: CCA orth decomp}
\end{equation}
i.e., $\Pi_{\mathrm{new},\matu}^{}(\bareta_1)=\bareta_1-\Pi_{\mathrm{new},\matu}^\perp(\bareta_1)=\bareta_1-\matu\matS_1^{}\matM_{1,2}^{-1}$. To characterize the symmetric matrix $\matS_1$, we multiply~\cref{eq: CCA orth decomp} from the left by $\matu^\T\Sigma_{xx}$, and yield $\matu^\T\Sigma_{xx}\bareta_1=\matOmega_1+\matS_1^{}\matM_{1,2}^{-1}$. Summing up $\matu^\T\Sigma_{xx}\bareta_1$ and $(\matu^\T\Sigma_{xx}\bareta_1)^\T$, we obtain that $\matS_1^{}\matM_{1,2}^{-1}+\matM_{1,2}^{-1}\matS_1^{}=\matu^\T\Sigma_{xx}\bareta_1+\bareta_1^\T\Sigma_{xx}\matu$, which has a unique solution according to~\cite[Theorem 2.4.4.1]{horn2012matrix}.
\end{proof}

\section{Implementation details of TR-GN}\label{app: TR-GN}
Recall that the search direction $\eta^{(t)}$ in~\cref{alg: RGN} is determined by the following least-squares problem
\begin{equation}
  \underset{\eta\in\tangent_{\vecmatW}\!\tensM}{\arg\min}\,
  \|\mathrm{D}F(\vecmatW)[\eta]+F(\vecmatW)\|_\mathrm{F}^2.\label{eq: search direction by ls}
\end{equation}
Specifically, it follows from the multilinearity of $\tau$ that the directional derivative $\mathrm{D}F(\vecmatW)[\eta]$ in~\cref{eq: search direction by ls} can be computed by
\begin{equation*}
  \begin{aligned}
    \mathrm{D}F(\vecmatW)[\eta]&=\lim_{h\to 0}\frac{\proj_{\Omega} (\tau(\vecmatW+h\eta)-\tensA)-\proj_{\Omega} (\tau(\vecmatW)-\tensA)}{\sqrt{p}h}\\
    &=\frac{1}{\sqrt{p}}\sum_{k=1}^{d}\proj_{\Omega} (\tau(\matW_1,\dots,\matW_{k-1},\eta_k,\matW_{k+1},\dots,\matW_d)).
  \end{aligned}
\end{equation*}
Then, we yield
\begin{equation*}
  \begin{aligned}
    &\|\mathrm{D}F(\vecmatW)[\eta]+F(\vecmatW)\|_\mathrm{F}^2\\
    &=\frac{1}{p}\sum_{i=1}^{n_1n_2\cdots n_d}\langle\proj_\Omega(\tensB_i),\sum_{k=1}^{d}\tau(\matW_1,\dots,\matW_{k-1},\eta_k,\matW_{k+1},\dots,\matW_d)+\tau(\vecmatW)-\tensA\rangle^2\\
    &=\frac{1}{p}\sum_{i=1}^{n_1n_2\cdots n_d}\Big(\sum_{k=1}^d\left\langle\proj_{\Omega_{(k)}}\!((\tensB_i)_{(k)})\matW_{\neq k},\eta_k\right\rangle+\left\langle\proj_\Omega(\tensB_i),\tau(\vecmatW)-\tensA\right\rangle\Big)^2\\
    &=\frac{1}{p}\sum_{(i_1,\dots,i_d)\in\Omega}\Big(\sum_{k=1}^d\eta_k(i_k,:)^\T\rmvec((\prod_{j=k+1}^{d}\matU_{j}(i_{j})\prod_{j=1}^{k-1}\matU_{j}(i_{j}))^\T)+\tensS(i_1,\dots,i_d)\Big)^2,
  \end{aligned}
\end{equation*}
where $\{\tensB_i\}_{i=1}^{n_1n_2\cdots n_d}$ is defined by $(\tensB_i)(i_1,i_2,\dots,i_d)=1$ if $i=\sum_{j=1}^d (i_j-1)\prod_{\ell=1}^{j-1}n_\ell$, otherwise $(\tensB_i)(i_1,i_2,\dots,i_d)=0$, and $\tensS:=\proj_\Omega(\tau(\vecmatW)-\tensA)$ refers to the residual tensor. Note that for $i=\sum_{j=1}^d (i_j-1)\prod_{\ell=1}^{j-1}n_\ell$, the matrix $\proj_{\Omega_{(k)}}\!((\tensB_i)_{(k)})=0$ if $(i_1,i_2,\dots,i_d)\notin\Omega$. Consequently, the problem~\cref{eq: search direction by ls} is a least-squares problem of $\sum_{k=1}^d n_kr_{k-1}r_k$ variables.

\bibliographystyle{siamplain}
\bibliography{references}
\end{document}